\def\BState{\State\hskip-\ALG@thistlm}
\newtheorem*{rep@theorem}{\rep@title}
\newcommand{\newreptheorem}[2]{%
\newenvironment{rep#1}[1]{%
 \def\rep@title{#2 \ref{##1}}%
 \begin{rep@theorem}}%
 {\end{rep@theorem}}}
\theoremstyle{definition}
\newtheorem{theorem}{Theorem}[section] 
\newtheorem{lemma}[theorem]{Lemma}
\newtheorem{corollary}[theorem]{Corollary}
\newtheorem{proposition}[theorem]{Proposition}
\newtheorem{example}[theorem]{Example}
\newtheorem{remark}[theorem]{Remark}
\newtheorem{definition}[theorem]{Definition}
\definecolor{darkblue}{rgb}{0.0, 0.0, 0.8}
\definecolor{darkred}{rgb}{0.8, 0.0, 0.0}
\definecolor{darkgreen}{rgb}{0.0, 0.5, 0.0}
\definecolor{orange}{rgb}{1, 0.5, 0.0}
\newcommand*{\rom}[1]{\expandafter\@slowromancap\romannumeral #1@}
\newcommand{\dmatch}{d_{\mathrm{match}}}
\newcommand{\dm}{\mathrm{dm}}
\newcommand{\B}{\mathcal{B}}
\newcommand{\ob}{\mathrm{ob}}
\newcommand{\EU}{\overline{\mathbf{U}}}
\newcommand{\ER}{\overline{\mathbf{R}}}
\newcommand{\Pb}{\mathbf{P}}
\newcommand{\Q}{\mathbf{Q}}
\newcommand{\vect}{\mathbf{Vec}}
\newcommand{\dhaus}{d_\mathrm{H}}
\newcommand{\gammax}{\gamma_X=(X,d_X(\cdot))}
\newcommand{\gammay}{\gamma_Y=(Y,d_Y(\cdot))}
\newcommand{\dom}{\mathrm{dom}}
\newcommand{\dyndis}{\mathrm{dis}^\mathrm{dyn}}
\newcommand{\Sets}{\mathbf{Sets}}
\newcommand{\Part}{\mathbf{Part}}
\newcommand{\simp}{\mathbf{Simp}}
\newcommand{\dgh}{d_\mathrm{GH}}
\newcommand{\dghdyn}{d_\mathrm{GH}^{\mathrm{dyn}}}
\newcommand{\dl}{d_{\lambda}^{\bullet}}
\newcommand{\F}{\mathbb{F}}
\newcommand{\im}{\mathrm{im}}
\newcommand{\dis}{\mathrm{dis}}
\newcommand{\Zop}{\Z_+^{\mathrm{op}}}
\newcommand{\Rop}{\R^{\mathrm{op}}}
\newcommand{\Qop}{\Q^{\mathrm{op}}}
\newcommand{\Pop}{\Pb^{\mathrm{op}}}
\newcommand{\dintone}{d_{\mathrm{I},1}}
\newcommand{\dinttwo}{d_{\mathrm{I},2}}
\newcommand{\dintthree}{d_{\mathrm{I},3}}
\newcommand{\dintsix}{d_{\mathrm{I},6}}
\newcommand{\bdint}{\mathbf{d}_\mathrm{I}}
\newcommand{\bdintd}{\mathbf{d}_{\mathrm{I},d}}
\newcommand{\dintd}{d_{\mathrm{I},d}}
\newcommand{\dinttwod}{d_{\mathrm{I},2d}}
\newcommand{\dintl}{d_{\mathtt{dyn}}}
\newcommand{\bott}{d_\mathrm{B}}
\newcommand{\ripss}{\mathcal{R}^{\mathrm{lev}}}
\newcommand{\ripsss}{\mathcal{R}_{\bullet}}
\newcommand{\sing}{\mathcal{H}^\mathrm{SL}}
\newcommand{\ba}{\mathbf{a}}
\newcommand{\bb}{\mathbf{b}}
\newcommand{\bc}{\mathbf{c}}
\newcommand{\bt}{[t,t]}
\newcommand{\bu}{I}
\newcommand{\buu}{\mathbf{u}}
\newcommand{\bv}{\mathbf{v}}
\newcommand{\Hrm}{\mathrm{H}}
\newcommand{\T}{\mathbf{R}}
\newcommand{\Z}{\mathbf{Z}}
\newcommand{\R}{\mathbf{R}}
\newcommand{\U}{\mathbf{Int}}
\newcommand{\UU}{\mathbf{U}}
\newcommand{\N}{\mathbf{N}}
\newcommand{\C}{\mathcal{C}}
\newcommand{\eps}{\varepsilon}
\newcommand{\dint}{d_{\mathrm{I}}}
\newcommand{\dero}{d_{\mathrm{E}}}
\newcommand{\dgm}{\mathrm{dgm}}
\newcommand{\rips}{\mathcal{R}_\delta}
\newcommand{\sets}{\mathbf{Sets}}
\newcommand{\lmulti}{\{\!\!\{}
\newcommand{\rmulti}{\}\!\!\}}
\newcommand{\abs}[1]{\left\lvert#1\right\rvert}
\newcommand{\norm}[1]{\left\lVert#1\right\rVert}
\newcommand{\tripod}{R:\ X \xtwoheadleftarrow{\varphi_X} Z \xtwoheadrightarrow{\varphi_Y} Y}
\newcommand{\rk}{\mathrm{rk}}
\title{Spatio-temporal Persistent Homology for Dynamic Metric Spaces}
\author[1]{Woojin Kim}
\author[2]{Facundo M\'emoli}
\affil[1]{Department of Mathematics,
 	The Ohio State University\\
 	\texttt{kim.5235@osu.edu}}
 \affil[2]{Department of Mathematics and Department of Computer Science and Engineering,
 	The Ohio State University\\ 
 	\texttt{memoli@math.osu.edu}}
\begin{document}
\maketitle

\begin{abstract}
Characterizing the dynamics of time-evolving data within the framework of topological data analysis (TDA) has been attracting increasingly more attention.
Popular instances of time-evolving data include flocking/swarming behaviors in animals and social networks in the human sphere. A natural mathematical model
for such collective behaviors is a dynamic point cloud, or more generally a dynamic metric space (DMS). 

In this paper we  extend the Rips filtration stability result for (static) metric spaces to
the setting of DMSs. We do this by devising a certain  three-parameter "spatiotemporal" filtration of a DMS. Applying the homology functor to this filtration gives rise to 
multidimensional persistence module derived from the DMS. We show that this multidimensional module  enjoys stability under a suitable generalization of the Gromov-Hausdorff distance which permits metrizing the collection of all DMSs.

On the other hand, it is recognized that, in general, comparing two multidimensional persistence modules leads to  intractable computational problems. For the purpose of practical comparison of DMSs, we focus on both the rank invariant or the dimension function of the multidimensional persistence module that is derived from a DMS.
We specifically propose to utilize a certain metric $d$ for comparing these invariants: In our work this $d$ is either (1) a certain generalization 
of the erosion distance by Patel, or  (2) a specialized version of the well known interleaving distance. In either case, the metric $d$ can be computed in polynomial time.

\end{abstract}

\tableofcontents


\section{Introduction}

\paragraph{Stability and tractability of TDA for studying metric spaces.} Finite point clouds or  finite metric spaces are amongst the most common  data representations considered in topological data analysis (TDA) \cite{Carl09,edelsbrunner2008persistent,ghrist2008barcodes}. In particular, the stability of the Single Linkage Hierarchical Clustering (SLHC) method \cite{clustum} or the stability of the  persistent homology of filtered Rips complexes built on metric spaces \cite{dghrips,chazal2014persistence} motivates adopting these constructions when studying metric spaces arising in applications.

Whereas there has been extensive applications of TDA to static metric data (thanks to the aforementioned theoretical underpinnings), there is not much study of \emph{dynamic metric} data \emph{from the TDA perspective}. Our motivation for considering dynamic metric data stems from the study and characterization
of flocking/swarming behaviors of animals \cite{benkert2008reporting,gudmundsson2006computing,gudmundsson2007efficient,huang2008modeling,li2010swarm,parrish1997animal,sumpter-collective,vieira2009line}, convoys \cite{jeung2008discovery}, moving clusters \cite{kalnis2005discovering}, or mobile groups \cite{hwang2005mining,wang2008efficient}. In this paper,  by extending ideas from \cite{clustum,dghrips,chazal2014persistence,kim2017stable,kim2018CCCG}, we aim at establishing a TDA framework for the study of dynamic metric spaces (DMSs) which comes together with stability theorems. We begin by describing and comparing relevant work with ours.

\paragraph{Lack of  an adequate metric for DMSs.} In \cite{munch2013applications}, Munch considers  \emph{vineyards} --- a certain notion of time-varying persistence diagrams introduced by  Cohen-Steiner et al. \cite{CEM06} ---  as signatures for dynamic point clouds. 
Munch, in particular, shows that vineyards are stable\footnote{Under a certain  notion of distance arising from in the integration over time of the bottleneck distance between the instantaneous persistence diagrams.} \cite{cohen2007stability} under perturbations of the input dynamic point cloud \cite[Theorem 17]{munch2013applications}. 
However, we will observe below that, for the purpose of comparing two DMSs (which we regard as models of flocking behaviors), the metrics that directly arise as the integration of the Hausdorff or Gromov-Haussdorff distance can sometimes fail to be discriminative enough (see Example \ref{ex:weak isomorphic} and Remark \ref{rem:sensitivity}).

In \cite{topaz}, Halverson, Topaz and Ziegelmeier study aggregation
models for biological systems by adopting ideas from TDA. They show that topological analysis of aggregation reveals dynamical events which are not captured by classical analysis methods. Specifically, in order to extract insights
about the global behavior of dynamic point clouds obtained by simulating aggregation models, they employ the so-called \emph{CROCKER\footnote{Contour Realization Of Computed $k$-dimensional hole Evolution in the Rips
complex} plot}. This plot represents the evolution of Betti numbers of Rips complexes over the plane of \emph{time} and \emph{scale} parameters. In \cite{ulmer2018assessing}, Topaz, Ulmer and Ziegelmeier discretize CROCKER plots as matrices and make use of Frobenius norm for comparing any two such matrices. In \cite{topaz,ulmer2018assessing}, the authors do not provide stability results for CROCKER plots derived from biological aggregation models.

\begin{figure}
    \centering
    \includegraphics[width=0.9\textwidth]{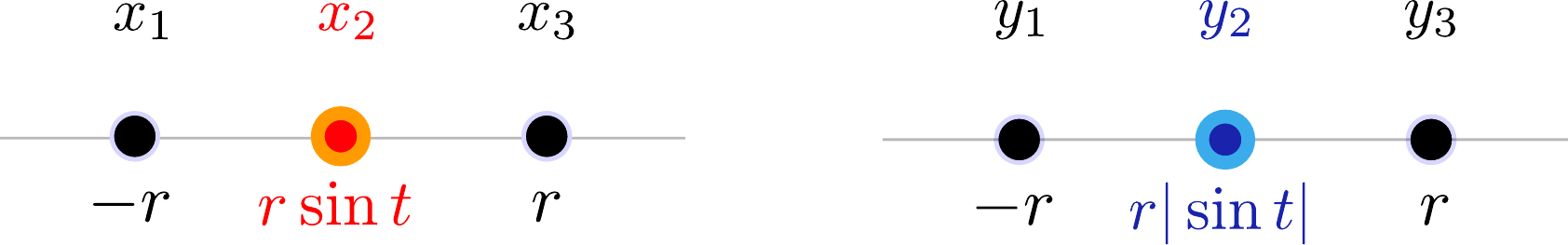}
    \caption{Fix $r>0$. The two figures above stand for two dynamic point clouds $X_r(\cdot)$ and  $Y_r(\cdot)$ in the real line each consisting of $3$ points $x_1,x_2,x_3$ and $y_1,y_2,y_3$,  respectively. Each of $X_r(\cdot)$ and $Y_r(\cdot)$ contains (1) two static points located at $-r$ and $r$ respectively ($x_1,x_3$ and $y_1,y_3$), and (2) one dynamic point with the time-dependent coordinate either $r\sin t$ or $r\abs{\sin t}$, $t\in \R$ ($x_2$ and $y_2$). Observe that in $X_r(\cdot)$ the unique dynamic point $x_2$ meets both of $x_1$ and $x_2$ periodically. On the contrary, in $Y_r(\cdot)$, the unique dynamic point $y_2$ meets only $y_3$ periodically.}
    \label{fig:intro}
\end{figure}

\paragraph{Motivation for introducing a new metric for DMSs.}  Consider the two dynamic point clouds $X_r(\cdot)$ and  $Y_r(\cdot)$ illustrated as in Figure \ref{fig:intro}. Let us regard them as instances of DMS with the time-dependent metrics obtained by restricting the Euclidean metric on $\R^2$ at each time $t\in\R$. 

Observe that for each time $t\in\R$, the metric spaces $X_r(t)$ and $Y_r(t)$ are \emph{isometric} and hence the Gromov-Hausdorff distance \cite[Ch.7]{burago} $\dgh\left( X_r(t),Y_r(t) \right)$ is \emph{zero}. This in turn implies that the integral $\int_{t\in\R}\ \dgh\left( X_r(t),Y_r(t) \right)\ dt$ is also \emph{zero}, implying that $X_r(\cdot)$ and $Y_r(\cdot)$ are not distinguished from each other by the \emph{integrated} Gromov-Hausdorff distance. \footnote{In \cite{munch2013applications}, in order to compare two dynamic point clouds, Munch considered the \emph{integrated} Hausdorff distance $\int \dhaus$ over time. Since the metric $\int \dhaus$ takes account of relative position of two dynamic point clouds inside an ambient metric space, we do not consider utilizing $\int \dhaus$ for the purpose of comparing \emph{intrinsic behaviors} of two dynamic metric data.  
Also, Munch considered the \emph{integrated} bottleneck distance $\int \bott$ by computing the Rips filtrations of dynamic point clouds at each time. However, by \cite[Theorem 3.1]{dghrips}, the metric  $\int \bott$ is upper-bounded by (twice) the integrated Gromov-Hausdorff distance, which in this case vanishes.  Therefore, $\int \bott$ does not discriminate the two dynamic point clouds given as in Figure \ref{fig:intro}.} See Remark \ref{rem:weak gh}.

However, regarding $X_r(\cdot)$ and $Y_r(\cdot)$ as models of \emph{collective behaviors} of animals,vehicles or people, $X_r(\cdot)$ and $Y_r(\cdot)$ are clearly distinct from each other. This motivates us to seek an adequate metric that measures the difference between the \emph{dynamics} underlying any two given DMSs. In particular, this metric should \emph{not} be a mere sum of instantaneous differences of the given DMSs over time.  

In this paper, we adopt $\dintl$, called the \emph{$\lambda$-slack interleaving distance} with $\lambda=2$ (Definition \ref{def:lambda dist}, originally introduced in \cite{kim2017stable}), as a measure of the behavioral difference between DMSs.  In Section \ref{sec:overview}, we specifically show that the metric $\dintl$ returns a positive value for the pair of DMSs $X_r(\cdot)$ and $Y_r(\cdot)$ in Figure \ref{fig:intro}, demonstrating its sensitivity.

\paragraph{About stability and tractability of $\dintl$.} Even though the metric $\dintl$ is able to differentiate subtly different DMSs (Theorem \ref{thm:lambda slack distance}), computing $\dintl$ is \emph{not} tractable in general (Remark \ref{rem:dintl computational complexity}). This hinders us from utilizing $\dintl$ in practice. Therefore, as a pragmatic approach, we adopt the comparison of \emph{invariants} of DMSs, rather than directly comparing DMSs . To this end, 

\begin{itemize}
    \item[(a)] the invariants \emph{must} be stable under perturbations of the input DMS, and 
    \item[(b)]  the metric for comparing two invariants extracted from two DMSs \emph{must} be efficiently computable.
\end{itemize}
\paragraph{Contributions.} In this work, we achieve both items (a) and (b) above,  described as follows. 

With regard to (a), we first extract invariants from a given DMS, where these invariants are in the form of \emph{3-dimensional} persistence modules of sets or vector spaces. These are obtained from a  blend of ideas related to the Rips filtration \cite{cohen2007stability,dghrips,comptopo-herbert}, the \emph{single linkage hierarchical clustering (SLHC)} method \cite{clustum}, and the interlevel set persistence/categorified Reeb graphs \cite{bendich2013homology,botnan2018algebraic,carlsson2009zigzag,de2016categorified}.

We are able to prove the stability of these invariants (Theorems \ref{thm:main2} and \ref{thm:stability of spatiotemporal dendrogram}) by adapting ideas from \cite{clustum,dghrips,chazal2014persistence}. We specifically emphasize that our stability results are a generalization of the well known stability theorems for the SLHC method \cite{clustum} and the Rips filtration of a metric space \cite{dghrips,chazal2014persistence}: Indeed, we show that by restricting ourselves to the class of \emph{constant} DMSs, our results reduce to the standard stability theorems for static metric spaces in \cite{clustum,dghrips,chazal2014persistence}.

Next, in regard to item (b) above, we address the issue of computability of the metric between invariants of DMSs. In \cite{bjerkevik2018computing,bjerkevik2017computational}, Bjerkevik and Botnan show that computing the interleaving distance $\dint$ \cite{lesnick} between multidimensional persistence modules can in general be NP-hard. Also, since we are not  guaranteed to have  interval decomposability \cite{botnan2018algebraic,carlsson2009theory} of the  $3$-dimensional modules considered in this paper, we are not in a position to utilize the bottleneck distance and relevant algorithms developed by Dey and Xin  \cite{dey2018computing} instead of $\dint$. 

This motivates us to further simplify our invariant $M_{X}$ associated to a DMS $(X,d_X(\cdot))$, which is in the form of $3$-dimensional persistence module. We focus on both the \emph{dimension function} and the \emph{rank function}. 
The dimension function $\dm(M_{X})$ of a persistence module $M_{X}$ has been studied in various contexts and with various names such as Betti curve, feature counting function, etc,  \cite{babichev2018robust,dey2018computing,giusti2016two,giusti2015clique,kahle2013limit,scolamiero2017multidimensional}. The rank function $\rk(M_{X})$ of $M_{X}$ has also been extensively considered  \cite{carlsson2009theory,cerri2013betti,landi2018rank,patel2018generalized,puuska2017erosion}. 
We observe that both of these functions  (1) can themselves be computed in polynomial time, (2) can be compared to each other via the interleaving distance $\dint^{\Z}$ for \emph{integer-valued functions} (see Section \ref{subsec:interleaving}) and (3) are stable to perturbations of $(X,d_X(\cdot))$ under $\dintl$ (Theorems \ref{thm:rank k stability} and \ref{thm:betti-0 stability}). We also propose a simple algorithm for computing  $\dint^{\Z}$ in poly-time (Section \ref{sec:algorithm}). Therefore, we can bound the distance $\dintl$ in poly-time by computing $\dint^{\Z}$ and either $\dm(\cdot)$ or $\rk(\cdot)$.

We in particular emphasize that our method for computing $\dint^\Z$ provides a  poly-time algorithm for bounding from below the interleaving distance between \newline $d$-dimensional persistence modules $M$ of vector spaces  \emph{without any restriction on  $d$ or on the structure of $M$} (even if $M$ is not derived from a DMS).

\paragraph{Other related work.} Aiming at analyzing/summarizing trajectory data such as the movement of animals,
vehicles, and people, Buchin and et al. introduce the notion of \emph{trajectory grouping structure} \cite{buchin2013trajectory}. This is a summarization, in the form of a labeled Reeb graph, of a set of points having piecewise linear trajectories with time-stamped
vertices in Euclidean space $\R^d$. This work was subsequently enriched in \cite{kostitsyna2015trajectory,van2016grouping,van2015central,van2016refined}.

In \cite{kim2018CCCG,kim2017stable}, the thread of ideas in \cite{buchin2013trajectory} is blended with ideas in zigzag persistence theory \cite{zigzag}. Specifically, particular cases of  trajectory grouping structure in \cite{buchin2013trajectory}, are named \emph{formigrams}. By clarifying the zigzag persistence structure of formigrams, formigrams are further summarized into \emph{barcodes}. Regarding the barcode as a signature of a set of trajectory data, the authors of \cite{kim2018CCCG,kim2017stable} utilize these barcodes for carrying out the classification task of a family of synthetic flocking behaviors \cite{zane}.   

The central results in \cite{kim2018CCCG,kim2017stable} show that  barcodes or formigrams from a trajectory data are stable to perturbations of the input data \cite[Theorem 5]{kim2018CCCG},\cite[Theorem 9.21]{kim2017stable}. This  work is a sequel to  \cite{kim2018CCCG,kim2017stable}. Namely, by considering Rips-like filtrations parametrized  both by \emph{time intervals} and \emph{spatial scale}, we obtain novel stability results in every homological dimension.

Other work utilizing TDA-like ideas in the analysis of dynamic data includes: a study of time-varying merge trees or time-varying Reeb graphs \cite{edelsbrunner2008time,oesterling2015computing}. Also, ideas of persistent homology are utilized in the study of time-varying graphs \cite{hajij2018visual}, discretely
sampled dynamical systems \cite{bauer2017persistence,edelsbrunner2015persistent} or in the study of combinatorial dynamical systems \cite{dey2018persistent}.

\paragraph{Organization.} In Section \ref{sec:dms} we review the notion of DMSs and the metric $\dintl$ on DMSs. In Section \ref{sec:interleaving for various categories} we review the interleaving distance. In Section \ref{sec:overview} we provide an overview of our new stability results about persistent homology features of DMSs. In Section \ref{sec:algorithm} we propose and study an algorithm for computing the interleaving distance between integer-valued functions. Section 6 contains proofs of statements (theorems, etc.) from Section 4.

In Section \ref{sec:discretization of a DMS} we describe how to analyze and compare discrete time series of metric data. In Section \ref{sec:Crocker} we clarify the relationship between the rank invariants of DMSs and the CROCKER-plots of DMSs. In Section \ref{sec:other metrics} we compare the interleaving distance between integer-valued functions with other relevant metrics. In Section \ref{sec:static metric spaces} we review the stability of the single linkage hierarchical clustering (SLHC) method for static metric spaces; results in this section are generalized to those in Section \ref{sec:details about SC}. 

\paragraph{Acknowledgement.} FM thanks Justin Curry and Amit Patel for beneficial discussions. This work was partially supported by NSF grants IIS-1422400, CCF-1526513, DMS-1723003, and CCF-1740761.


\section{Dynamic metric spaces (DMSs)}\label{sec:dms}

Throughout this paper, we fix a certain field $\F$ and only consider vector spaces over $\F$ whenever they arise. Any simplicial homology has coefficients in $\F$.  By $\Z_+$ and $\R_+$, we denote the set of non-negative integers and the set of non-negative reals, respectively. 

\subsection{Definition of DMSs}

\paragraph{DMSs.} A DMS $\gammax$ stands for a pair of finite set $X$ with $\R$-parametrized metric $d_X(\cdot):\R\times X\times X \rightarrow \R_+$: for each $t\in \R$, a certain (pseudo-)metric $d_X(t):X\times X \rightarrow \R_+$ is obtained:

\begin{definition}[Dynamic metric spaces {\cite{kim2017stable}}]\label{def:dms} A \emph{dynamic metric space}  is a  pair $\gamma_X = (X,d_X(\cdot))$ where $X$ is a non-empty finite set and $d_X(\cdot):\T\times X\times X\rightarrow \R_+$ satisfies:\begin{enumerate}[label=(\roman*)]
		\item For every $t\in\T$, $\gamma_X(t)=(X,d_X(t))$ is a pseudo-metric space.\label{item:dynamic1}		
		\item There exists $t_0\in\T$ such that $\gamma_X(t_0)$ is a metric space.\footnote{This condition is assumed since otherwise one could substitute the  DMSs $\gamma_X$ by  another DMSs $\gamma_{X'}$ over a set $X'$ which satisfies $|X'|<|X|$, and such that $\gamma_{X'}$ is point-wisely equivalent to $\gamma_X$.}\label{item:dynamic2}	
		\item For fixed $x,x'\in X$,	$d_X(\cdot)(x,x'):\T\rightarrow \R_+$  is continuous. \label{item:dynamic3}	
	\end{enumerate}
	We refer to $t$ as the \emph{time} parameter.
\end{definition}

Let $(\mathcal{M},\dgh)$ be the collection of all finite (pseudo-)metric spaces equipped with the Gromov-Hausdorff distance (Definition \ref{def:the GH}). Any DMS $\gammax$ can be seen as a continuous curve from $\R$ to $(\mathcal{M},\dgh)$. 

\begin{example}[{\cite{kim2017stable}}]\label{ex:constant} 
Examples of DMSs include: 
	\begin{enumerate}[label=(\roman*)]
		\item (Constant DMSs) Given a finite metric space $(X,d_X)$, define the DMS $\gamma_X=(X,d_X'(\cdot))$ by declaring that for all $t\in \R$, $d_X'(t)=d_X$ as a function $X\times X\rightarrow \R_+$. We refer to such $\gamma_X$ as a \emph{constant} DMS and simply write $\gamma_X\equiv (X,d_X)$.\label{item:constant1}
\item (Dynamic point clouds) A family of examples is given by $n$ points moving continuously inside an ambient metric space $(Z,d_Z)$ where particles are allowed to coalesce. 
		If the $n$ trajectories are $x_1(t),\ldots,x_n(t)\in Z$, then let $X:=\{1,\ldots,n\}$ and define the DMS $\gamma_X := (X,d_X(\cdot))$ as follows: for $t\in \T$ and $i,j\in\{1,\ldots,n\}$, let $d_X(t)(i,j):=d_
		Z(x_i(t),x_j(t)).$  We call $\gamma_X$ a \emph{dynamic point cloud} in $Z$ and simply write  $X(\cdot)=\{x_i(\cdot)\}_{i=1}^n$ or $X(\cdot)$. \label{item:dynamic point cloud}
	\end{enumerate}
\end{example}

\paragraph{Weak and strong isomorphism between DMSs.} We introduce two different notions of  \emph{isomorphism} between DMSs.                                                                                                                                                                 
\begin{definition}[Isomorphism between DMSs]\label{def:isomorphism} Let $\gammax, \gammay$ be any two DMSs. 
	\begin{enumerate}[label=(\roman*)]
		\item $\gamma_X$ and $\gamma_Y$ are \emph{strongly isomorphic} if there exists a bijection $\varphi:X\rightarrow Y$ such that $\varphi$ is an isometry between $\gamma_X(t)=(X,d_X(t))$ and $\gamma_Y(t)=(Y,d_Y(t))$  for all $t\in \R$.\label{item:strong}
		
		\item $\gamma_X$ and $\gamma_Y$ are \emph{weakly isomorphic} if for each $t\in \T$, $\gamma_X(t)=(X,d_X(t))$ is isometric to $\gamma_Y(t)=(Y,d_Y(t))$.\label{item:weak}
	\end{enumerate}
\end{definition}
Any two strongly isomorphic DMSs are weakly isomorphic, but the converse is not true: 

\begin{example}[Weakly isomorphic DMSs] \label{ex:weak isomorphic} 
The dynamic point clouds $X_r(\cdot)$ and $Y_r(\cdot)$ described in Figure \ref{fig:intro} are weakly isomorphic, but not strongly isormorphic: Indeed, there is no bijection between $\{x_1,x_2,x_3\}$ and $\{y_1,y_2,y_3\}$ which serves as an isometry for all $t\in\R$.

\end{example}  

\subsection{The $\lambda$-slack interleaving distance between DMSs}
We review the extended metric $\dintl$ for DMSs, which was introduced in \cite[Definition 9.13]{kim2017stable} under the name of \emph{$\lambda$-slack interleaving distance}, for each $\lambda\in [0,\infty)$.

\begin{definition}Let $\eps\geq 0$. Given any map $d:X\times X\rightarrow \R$, by $d+\eps$ we denote the map $X\times X\rightarrow \R$ defined as $(d+\eps)(x,x')=d(x,x')+\eps$ for all $(x,x')\in X\times X.$ 
\end{definition}

In order to compare any two DMSs, we will utilize the notion of \emph{tripod}: 
\begin{definition}[Tripod]\label{def:tripod} Let $X$ and $Y$ be any two non-empty sets. For another set $Z$, any pair of \emph{surjective} maps $\tripod$ is called a \emph{tripod} between $X$ and $Y$.  
\end{definition}

Given any map $d:X\times X \rightarrow \R$, let $Z$ be any set and let $\varphi:Z\rightarrow X$ be any map. Then, we define $\left(\varphi^*d\right):Z\times Z\rightarrow \R$ as
\[\left(\varphi^*d\right)(z,z'):= d\left(\varphi(z),\varphi(z')\right), \ \ \mbox{ $(z,z')\in Z\times Z.$}
\]

\begin{definition}[Comparison of metrics via tripods] Consider any two maps $d_1:X \times X\rightarrow \R$ and $d_2:Y\times Y \rightarrow \R$. Given a tripod $\tripod$ between $X$ and $Y$, by \[d_1\leq_R d_2,\] we mean $\left(\varphi_X^*d_1\right)(z,z')\leq \left(\varphi_Y^*d_2\right)(z,z')$ for all $(z,z')\in Z\times Z$. 
\end{definition}

Let $\U$ be the collection of all finite \emph{closed} intervals of $\R$.  See Figure \ref{fig:intervals}. 

\begin{definition}[Time-interlevel analysis of a DMS]\label{def:bigvee}Given a DMS $\gammax$, define the function $\bigvee d_X:\U\times X\times X\rightarrow \R_+$ as \[\left(I,x,x'\right)\mapsto \bigvee_{I} d_X(x,x'):= \min_{s\in I}d_X(s)(x,x').\]
\end{definition} 

In words, $\bigvee_{I} d_X(x,x')$ stands for the minimum distance between $x$ and $x'$ within the time interval $I$. Observe that if $I\subset I'$ are both in $\U$, then $\bigvee_{I'}d_X(x,x')\leq \bigvee_{I}d_X(x,x')$ for all $x,x'\in X$.

For any $t\in \T$, let $[t]^\eps:=[t-\eps,t+\eps]\in \U$. 

\begin{figure}
    \centering
    \includegraphics[width=0.3\textwidth]{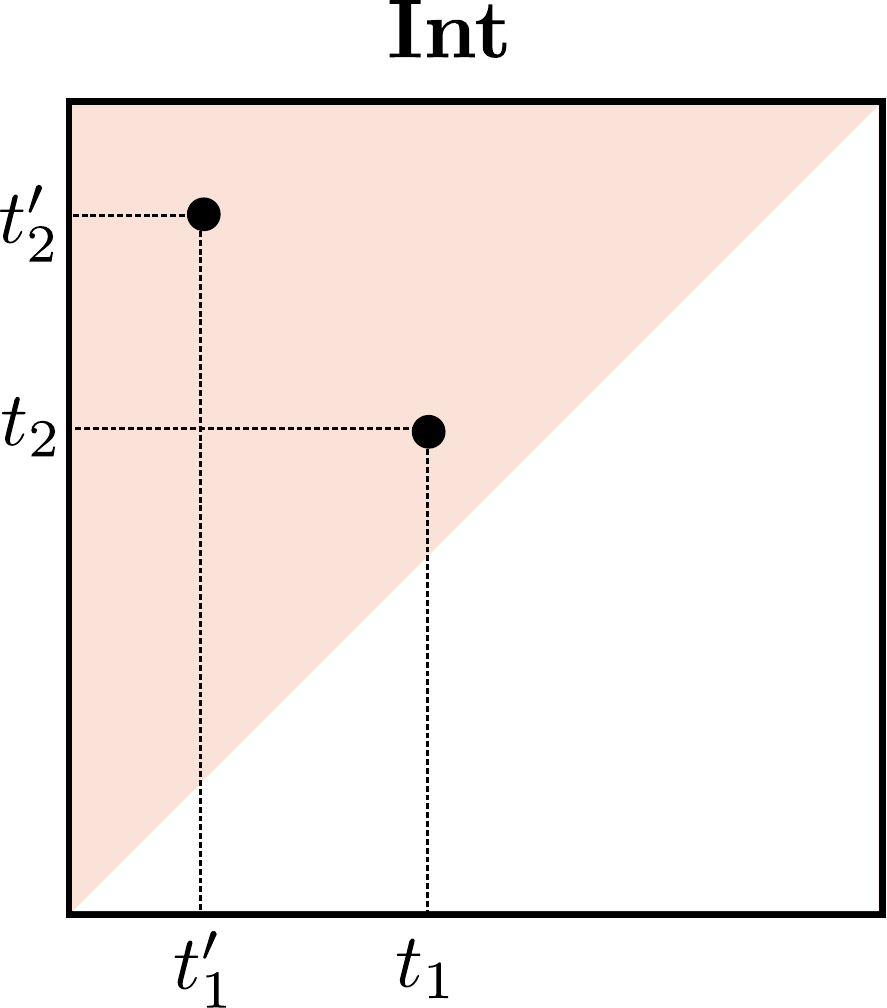}
    \caption{The collection $\U$ can be graphically represented as the upper-half plane $\{(t_1,t_2)\in\R^2: t_1 \leq t_2 \}$: Any closed interval $[t_1,t_2]$ of $\R$ is identified with the point $(t_1,t_2)$ on $\R^2$. Observe that if $[t_1,t_2]\subset [t_1',t_2']$, then the point $(t_1',t_2')$ is located at upper-left of the point $(t_1,t_2)$ in the plane.}
    \label{fig:intervals}
\end{figure}

\begin{definition}[Distortion of a tripod]\label{def:distortion}
Let $\gammax$ and \hbox{$\gammay$} be any two DMSs. Let $\tripod$ be a tripod between $X$ and $Y$ such that 
	\begin{equation}\label{eq:distor}
	\mbox{for all  $t\in \T$},\ \ \bigvee_{[t]^\eps}d_X\leq_R d_Y(t)+2\eps\ \  \mbox{and}\ \ \bigvee_{[t]^\eps}d_Y\leq_R d_X(t)+2\eps.	
	\end{equation}
	We call any such $R$ an \emph{$\eps$-tripod} between $\gamma_X$ and $\gamma_Y$. Define the \emph{distortion} $\dyndis(R)$ of $R$ to be the infimum of $\eps\geq 0$ for which $R$ is an $\eps$-tripod.
\end{definition}
In Definition \ref{def:distortion}, if $R$ is a $\eps$-tripod, then $R$ is also a $\eps'$-tripod for any $\eps'\geq\eps$.

\begin{definition}[The distance $\dintl$ between DMSs]\label{def:lambda dist}Given any two DMSs \newline $\gammax$ and $\gammay$, we define
	\[\dintl(\gamma_X,\gamma_Y):=\min_R\dyndis(R), \]
	where the minimum ranges over all tripods between $X$ and $Y$. 
\end{definition}

We remark that $\dintl$ is a hybrid between the Gromov-Hausdorff distance (Definition \ref{def:the GH}) and the interleaving distance \cite{bubenik2014categorification,CCG09} for Reeb graphs \cite{de2016categorified}. We also remark that, in \cite{kim2017stable}, $\dintl$ is introduced under the name of \emph{$\lambda$-slack interleaving distance} for $\lambda=2$. We use $\lambda=2$ in this paper for ease of notation.  This choice is not significant because different choices of $\lambda>0$ yield bilipschitz equivalent metrics for DMSs \cite[Proposition 11.29]{kim2017stable}. 

Any DMS $\gammax$ is said to be \emph{bounded} if there exists $r\in[0,\infty)$ such that for all $x,x'\in X$ and all $t\in \R,$ $d_X(t)(x,x')\leq r.$ For example, both DMSs given in Figure \ref{fig:intro} are bounded. 

\begin{theorem}[{\cite[Theorem 9.14]{kim2017stable}}]\label{thm:lambda slack distance}
$\dintl$ is an \emph{extended} metric between DMSs modulo strong isomorphism (Definition \ref{def:isomorphism} \ref{item:strong}). In particular,
 $\dintl$ is a metric between bounded DMSs modulo strong isomorphism.
\end{theorem}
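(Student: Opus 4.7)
The plan is to verify each of the standard conditions of an extended metric on the set of DMSs modulo strong isomorphism, and to separately show finiteness in the bounded case. Non-negativity is immediate from Definition \ref{def:distortion}. Symmetry follows because reversing the two legs of any tripod $\tripod$ produces the tripod $Y \xtwoheadleftarrow{\varphi_Y} Z \xtwoheadrightarrow{\varphi_X} X$ whose defining inequalities merely interchange the roles of $\gamma_X$ and $\gamma_Y$. For the bounded case, if each DMS has diameter at most $r$, the tripod $X \xtwoheadleftarrow{\pi_X} X\times Y \xtwoheadrightarrow{\pi_Y} Y$ (coordinate projections) trivially satisfies $\bigvee_{[t]^{r/2}}d_X \leq r \leq d_Y(t)+r$ pointwise, and symmetrically, giving $\dintl(\gamma_X,\gamma_Y)\leq r/2 < \infty$.

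For the triangle inequality, the plan is to compose tripods $\tripodone$ of distortion $\eps_1$ and $\tripodtwo$ of distortion $\eps_2$ via the fibered product $Z := \{(z_1,z_2)\in Z_1\times Z_2 : \varphi_Y(z_1)=\psi_Y(z_2)\}$, producing a tripod $R : X \twoheadleftarrow Z \twoheadrightarrow W$ whose legs are surjective because the legs of $R_1, R_2$ are. To verify $\dyndis(R)\leq \eps_1+\eps_2$, given $(z_1,z_2),(z_1',z_2')\in Z$ with images $x,x'\in X$, $y,y'\in Y$, $w,w'\in W$, invoke $R_2$ to obtain $s\in [t]^{\eps_2}$ with $d_Y(s)(y,y')\leq d_W(t)(w,w')+2\eps_2$, then invoke $R_1$ at time $s$ to obtain $u\in [s]^{\eps_1}\subseteq [t]^{\eps_1+\eps_2}$ with $d_X(u)(x,x')\leq d_Y(s)(y,y')+2\eps_1$. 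Chaining yields $\bigvee_{[t]^{\eps_1+\eps_2}}d_X(x,x')\leq d_W(t)(w,w')+2(\eps_1+\eps_2)$; the symmetric inequality is analogous.

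The hardest part will be the nontrivial direction of identity of indiscernibles: $\dintl(\gamma_X,\gamma_Y)=0$ implies that $\gamma_X$ and $\gamma_Y$ are strongly isomorphic. First I would reduce to correspondences, by noting that the distortion of a tripod depends only on the image $R\subseteq X\times Y$ of its two legs, so the infimum over tripods equals the minimum over the finite set of correspondences between the finite sets $X$ and $Y$. Next, for each such correspondence $R$, I would show $\dyndis(R)$ is attained: for minimizers $s_n\in [t]^{\eps_n}$ of $d_X(\cdot)(x,x')$ with $\eps_n\searrow\eps_0$, a subsequence converges to some $s^{\ast}\in [t]^{\eps_0}$, and continuity of $d_X(\cdot)(x,x')$ (Definition \ref{def:dms}\ref{item:dynamic3}) yields $\bigvee_{[t]^{\eps_0}}d_X(x,x')\leq d_Y(t)(y,y')+2\eps_0$. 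Thus some correspondence $R^{\ast}$ has $\dyndis(R^{\ast})=0$, and because $[t]^0=\{t\}$ this forces $d_X(t)(x,x')=d_Y(t)(y,y')$ for every $t\in\R$ and every $(x,y),(x',y')\in R^{\ast}$. Finally, Definition \ref{def:dms}\ref{item:dynamic2} comes in: at a time $t_0^Y$ where $d_Y(t_0^Y)$ is a genuine metric, $(x,y),(x,y')\in R^{\ast}$ forces $d_Y(t_0^Y)(y,y')=d_X(t_0^Y)(x,x)=0$, hence $y=y'$, and symmetrically at $t_0^X$ gives uniqueness in the $x$-variable. Therefore $R^{\ast}$ is the graph of a bijection $\varphi:X\to Y$ that is an isometry at every time, i.e.\ a strong isomorphism. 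The converse is immediate from the tripod $X \xtwoheadleftarrow{\mathrm{id}_X} X \xtwoheadrightarrow{\varphi} Y$, which has distortion $0$.
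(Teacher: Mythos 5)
Your proof is correct and complete; note that the paper itself does not prove Theorem \ref{thm:lambda slack distance} but imports it from \cite[Theorem 9.14]{kim2017stable}, and your argument (reduction to the finitely many correspondences between the finite sets $X$ and $Y$, attainment of $\dyndis$ via continuity of $d_X(\cdot)(x,x')$ and compactness of $[t]^\eps$, composition of tripods through the fibered product for the triangle inequality, and conditions (ii)--(iii) of Definition \ref{def:dms} to upgrade a $0$-tripod to a strong isomorphism) is exactly the standard route one expects for this statement. The only presentational caveat is that your triangle-inequality step already uses that a tripod of distortion $\eps_i$ is itself an $\eps_i$-tripod, which is the attainment lemma you establish only later; either state that lemma first or run the composition with $\eps_i+\delta$ and let $\delta\to 0$.
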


\begin{remark}[$\dintl$ generalizes the Gromov-Hausdorff distance {\cite[Remark 11.28]{kim2017stable}}]\label{rem:gromov generalization}  Given any two constant DMSs $\gamma_X\equiv (X,d_X)$ and $\gamma_Y\equiv (Y,d_Y)$, the metric $\dintl$ recovers the Gromov-Hausdorff distance between $(X,d_X)$ and $(Y,d_Y)$. Indeed, for any tripod $R$ between $X$ and $Y$, condition  (\ref{eq:distor}) reduces to 
	\[\abs{d_X(x,x')-d_Y(y,y')}\leq 2\eps \ \ \mbox{for all $(x,y),(x',y')\in R$}.\] Therefore, \[\dgh((X,d_X),(Y,d_Y))=\dintl(\gamma_X,\gamma_Y).\]	
\end{remark}

\begin{remark}\label{rem:dintl computational complexity}
From Remark \ref{rem:gromov generalization}, we conclude that the computation of $\dintl$ is in general not tractable: On the class of constant DMSs the metric $\dintl$ reduces to the Gromov-Hausdorff distance, which leads to NP-hard problems \cite{agarwal2015computing,schmiedl14shape,schmiedl2017computational}.
\end{remark}

\subsection{Variants of $\dintl$}

Recall that $\dintl$ is the $\lambda$-slack interleaving distance for $\lambda=2$. Here we discuss a variant of the $\lambda$-slack interleaving distance which arises from a slightly different way of incorporating the $\lambda$ parameter:

\begin{definition}[Multiplicative $\lambda$-slack interleaving distance]\label{def:dl} For $\lambda\in (0,\infty)$, we  define the \emph{multiplicative} $\lambda$-slack interleaving distance $\dl(\gamma_X,\gamma_Y)$ between two DMSs $\gammax$ and $\gammay$ as the infimum $\eps$  for which there exists a tripod $R$ between $X$ and $Y$ such that\footnote{In \cite{kim2017stable}, the original $\lambda$-slack interleaving distance $d_{\lambda}(\gamma_X,\gamma_Y)$, $\lambda\in [0,\infty)$ is defined as the infimum amount of time $\eps$ for which there exists a tripod $R$ between $X$ and $Y$ such that \[	\mbox{for all  $t\in \T$},\ \ \bigvee_{[t]^{\eps}}d_X\leq_R d_Y(t)+\lambda\eps \  \mbox{and}\ \ \bigvee_{[t]^\eps}d_Y\leq_R d_X(t)+\lambda\eps.\] In this original definition, the units of $\lambda$ is (distance units)/(time unit), whereas the units of $\lambda$ for $\dl$ is (time units)/(distance units).}	
	\begin{equation}\label{eq:distor2}
	\mbox{for all  $t\in \T$},\ \ \bigvee_{[t]^\frac{\eps}{\lambda}}d_X\leq_R d_Y(t)+\eps\ \  \mbox{and}\ \ \bigvee_{[t]^\frac{\eps}{\lambda}}d_Y\leq_R d_X(t)+\eps.	
	\end{equation}
\end{definition}

\begin{definition}[dyn-Gromov-Hausdorff distance between DMSs and its relation to $\dl$]  Let $\gamma_X$ and $\gamma_Y$ be any two DMSs and fix a tripod $R$ between $X$ and $Y$. For each $t\in \T$, let 
\[\dis(R)(t):=\inf\{\eps\in \R_+: d_X(t)\leq_R d_Y(t)+\eps\ \mbox{and}\  d_Y(t)\leq_R d_X(t)+\eps \}. \]
Define
$$\displaystyle \dghdyn(\gamma_X,\gamma_Y):=\min_R\sup_{t\in \T}\ \dis(R)(t),$$ where the minimum is taken over all tripods $R$ between $X$ and $Y$. We call this distance the \emph{dyn-Gromov-Hausdorff distance between $\gamma_X$ and $\gamma_Y$}.

\end{definition}
Note that, for the multiplicative interleaving distance $\dl$ in Definition \ref{def:dl}, we have \[\lim_{\lambda \rightarrow \infty}\dl(\gamma_X,\gamma_Y)= \dghdyn(\gamma_X,\gamma_Y).\] 
  Also, note that $\dghdyn$ between constant DMSs $\gamma_X\equiv (X,d_X)$ and $\gamma_Y\equiv (Y,d_Y)$ reduces to twice the Gromov-Hausdorff distance between $(X,d_X)$ and $(Y,d_Y)$. 
We remark that $\dghdyn$ is in general \emph{not} the supremum of the  Gromov-Hausdorff distances $\dgh(\gamma_X(t),\gamma_Y(t))$ over all times  $t\in\T$. Specifically, we have the following inequality:
$$
    L_{\mathrm{GH}}^{(\infty)}(\gamma_X,\gamma_Y):=\sup_{t\in \T}\ \dgh(\gamma_X(t),\gamma_Y(t))=\frac{1}{2}\cdot\sup_{t\in \T}\min_R\ \dis(R)(t)\stackrel{(*)}{\leq} \frac{1}{2}\cdot\min_R\sup_{t\in \T}\ \dis(R)(t)=\frac{1}{2}\cdot\dghdyn(\gamma_X,\gamma_Y).
$$
The inequality denoted by $(\ast)$ above is often strict, as it is to be expected as a result of swapping the $\sup \min$ implicit in $L^{(\infty)}_{\mathrm{GH}}$ for the $\min \sup$ in the definition of $\dghdyn$.\footnote{The quantity in the LHS is allows for picking a different correspondence for each time $t$ whereas the RHS demands that a single correspondence is adequate for all times.} For instance, for any pair $\gamma_X,\gamma_Y$ of weakly isomorphic but not strongly isomorphic DMSs (cf. Example \ref{ex:weak isomorphic}), one has that (1) $\dgh(\gamma_X(t),\gamma_Y(t))=0$ for every $t\in \T$ and in turn $\sup_{t\in \T}\ \dgh(\gamma_X(t),\gamma_Y(t))=0$; but in contrast (2)  $\dghdyn(\gamma_X,\gamma_Y)$ is strictly positive.

It is possible to give rise to a whole family of pseudo-distances of which $L_{\mathrm{GH}}^{(\infty)}$ is a particular example.  

This construction is analogous to the \emph{integrated Hausdorff distance} between dynamic point clouds considered in \cite{munch2013applications}.
\begin{remark}[Weak-$L^p$-Gromov-Hausdorff distance]\label{rem:weak gh}Fix any two DMSs $\gamma_X$ and $\gamma_Y$. For any fully supportted probability measure 
$\zeta $on $\R$ and $p\in[1,\infty)$, define
\[ L^{(p)}_{\mathrm{GH},\zeta}(\gamma_X,\gamma_Y):=\left(\int_{t\in\R}\big(\dgh(\gamma_X(t),\gamma_Y(t))\big)^p d\zeta\right)^\frac{1}{p}.
\]

It is clear that $L^{(p)}_{\mathrm{GH},\zeta}(\gamma_X,\gamma_Y)$ vanishes whenever $\gamma_X$ and $\gamma_Y$ are weakly isomorphic.
\end{remark}

\subsection{Persistent homology features of a DMS}\label{sec:analysis of dmss}
We extend ideas from persistent homology/single linkage hierarchical clustering method for metric spaces (Section \ref{sec:static metric spaces}) to the setting of \emph{dynamic} metric spaces (DMSs).

\paragraph{Posets and their opposite.} Given any poset $\Pb=(\Pb,\leq)$, we regard $\Pb$ as the category: Objects are the elements of $\Pb$. Also, for any $p,q\in \Pb$, there exists the unique morphism $p\rightarrow q$ if and only if $p\leq q$. Since there exists at most one morphism between any two elements of $\Pb$, the category $\Pb$ is called \emph{thin} and, \emph{any} closed diagram in $\Pb$ must commute. We sometimes consider the \emph{opposite} category of $\Pb$, which will be denoted by $\Pop$. In the category $\Pop$, for $p,q\in \Pb$, there exists the unique morphism $p\rightarrow q$ if and only if $p\geq q$.

\begin{example}[$\U$] Recall the collection $\U$ of all finite closed intervals of $\R$. We regard $\U$ as poset, where the order $\leq$ is the  inclusion $\subseteq$. Hence, $\U$ can be seen as the category of finite closed real intervals whose morphisms are inclusions.
\end{example}

\paragraph{Product of posets.} Given any two posets $\Pb$ and $\Q$, we assume by default that their product $\Pb\times \Q$ is equipped with the partial order $\leq$ defined as $(p,q)\leq (p'.q')$ if and only if $p\leq p'$ in $\Pb$ and $q\leq q'$ in $\Q$.

\begin{remark}\label{rem:subposet} In the poset $\U\times \R_+$, we have $(I,\delta)\leq (I',\delta')$ if and only if $I\subset I'$ and $\delta\leq \delta'$. See Figure \ref{fig:3d filtration}. We will regard $\U\times \R_+$ as a subposet of the product poset $\R_\times^3:=\Rop\times \R\times \R$ via the identification $([t_1,t_2],\delta) \leftrightarrow (t_1,t_2,\delta)$. Indeed,
\[([t_1,t_2],\delta)\leq ([t_1',t_2'],\delta') \ \mbox{in $\U\times \R_+$\ \ \ if and only if}\ \ \  (t_1,t_2,\delta)\leq (t_1',t_2',\delta')\ \mbox{in $\R_\times^3$}.\]
\end{remark}

\paragraph{Spatiotemporal Rips filtration of a DMS.} 
Let $\simp$ be the cateogry of abstract simplicial complexes with simplicial maps. By a \emph{(simplicial) filtration} we mean a functor from a poset to $\simp$. In order to encode multiscale topological features of \emph{DMSs} into a \emph{single} filtration, we define the \emph{spatiotemporal Rips filtration} of a DMS. Let us begin by recalling the \emph{Rips complex}:

\begin{definition}[The Rips complex]\label{def:Rips complex}
Let $(X,d_X)$ be a metric space. For each $\delta\in\R$, by $\rips(X,d_X)$ we mean the abstract simplicial complex on the set $X$ where a subset $\sigma\subset X$ belongs to $\rips(X,d_X)$ if and only if $d_X(x,x')\leq \delta$ for all $x,x'\in \sigma$. Note that if $\delta<0$, then $\rips(X,d_X)$ is empty.
\end{definition}

\begin{definition}[The Rips filtration]\label{def:Rips} Let $(X,d_X)$ be a metric space. The \emph{Rips filtration of a finite metric space $(X,d_X)$} is the functor $\ripsss(X,d_X):\R\rightarrow \simp$ described as follows: To each $\delta\in\R$, the simplicial complex $\rips(X,d_X)$ is assigned. Also, to any pair $\delta\leq \delta'$ in $\R$, the inclusion map $\rips(X,d_X)\hookrightarrow \mathcal{R}_{\delta'}(X,d_X)$ is assigned. 
\end{definition}

\begin{definition}[The spatiotemporal Rips filtration of a DMS]\label{def:spatiotemporal Rips} Given any DMS $\gammax$, the simplicial filtration $\ripss(\gamma_X):\U\times \R_+\rightarrow \simp$ defined as in Figure \ref{fig:3d filtration} is called the \emph{(spatiotemporal) Rips filtration} of $\gamma_X$.
\end{definition}

Definition \ref{def:spatiotemporal Rips} is based on a blend of ideas related to the Rips filtration \cite{cohen2007stability,dghrips,comptopo-herbert} and the interlevel set persistence/categorified Reeb graphs \cite{bendich2013homology,botnan2018algebraic,carlsson2009zigzag,de2016categorified}. The super-index ``lev" in $\ripss(\gamma_X)$ is meant to emphasize the  connection to ``inter\underline{lev}elset persistence".

\begin{figure}
\begin{center}
 \includegraphics[width=0.6\textwidth]{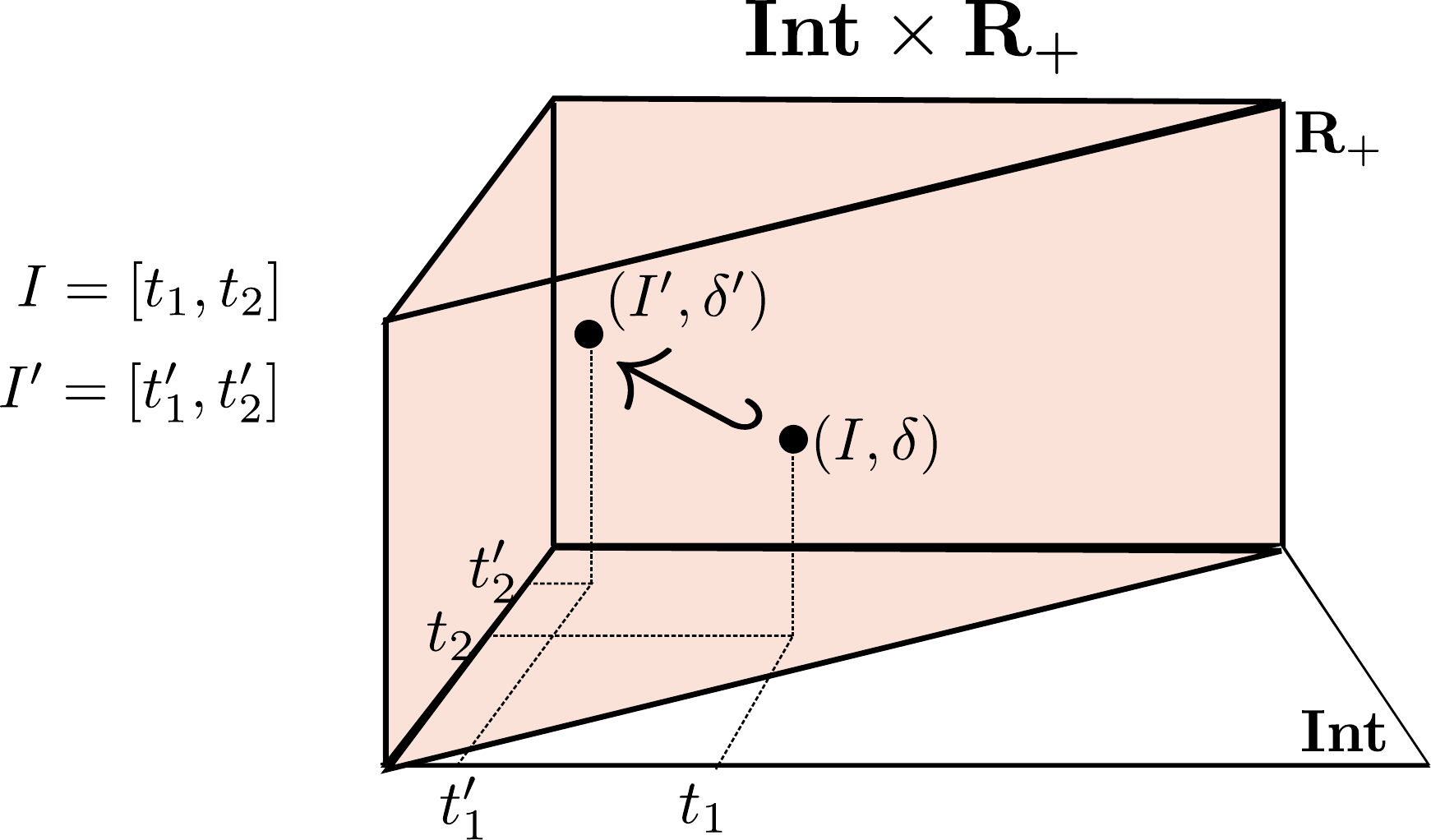}
 \caption{\label{fig:3d filtration} To each $(I,\delta)\in \U\times\R_+$, we associate the \emph{Rips complex} $\rips(X,\bigvee_{I}d_X)$ on the metric space* $(X,\bigvee_{I}d_X)$. Provided another interval $I'\in \U$ and scale $\delta'\in\R_+$ with $I\subset I'$ and $\delta\leq \delta'$, we obtain the inclusion $\rips(X,\bigvee_{I}d_X)\hookrightarrow \mathcal{R}_{\delta'}(X,\bigvee_{I'}d_X)$. This construction gives rise to a $3$-dimensional simplicial filtration $\ripss(\gamma_X)$ indexed by $\U\times \R_+$.\\ * \small{In fact, $\bigvee_{I}d_X:X\times X\rightarrow \R_+$ does not necessarily satisfy the triangle inequality. However, it does not prevent us from defining the Rips complex on the semi-metric space $(X,\bigvee_{I}d_X)$.}} 
\end{center}
\end{figure}

\begin{remark}[Comprehensiveness of Definition \ref{def:spatiotemporal Rips}]\label{rem:basic2}We remark the following:
\begin{enumerate}[label=(\roman*)]
    \item Consider the constant DMS $\gamma_X\equiv (X,d_X)$ as in Example \ref{ex:constant} \ref{item:constant1}. Then, the spatiotemporal Rips filtration of $\gamma_X$ amounts to the Rips filtration of $(X,d_X)$: for all $\bu\in \U$ and $\delta\in \R_+$, \[\ripss(\gamma_X)_{(\bu,\delta)}=\rips(X,d_X).\] 
    \item Let $\gammax$ be a DMS. For each $t\in \R$, we have the Rips filtration \newline $\ripsss(X,d_X(t)):\R_+\rightarrow \simp$ of the metric space $(X,d_X(t))$. All those filtrations are incorporated by $\ripss(\gamma_X)$ in the following sense:
    \[\ripss(\gamma_X)_{(\bt,\delta)}=\rips(X,d_X(t)), \ \ t\in \R, \ \delta\in \R_+.\]
\end{enumerate}
\end{remark}

By functoriality of the simplicial homology functor, we can define, for each $k\in \Z_+$, the persistence module $\Hrm_k\left(\ripss(\gamma_X)\right):=\U\times\R_+\rightarrow \vect$.

\paragraph{The rank invariant and the Betti-$0$ function of a DMS.}
We consider the rank invariant \cite{carlsson2009theory} of this multidimensional persistence module $\Hrm_k(\ripss(\gamma_X))$. Let \begin{equation}\label{eq:R6 subposet}
    \underline{\R^6}:=\{(t_1,t_2,\delta,t_1',t_2',\delta')\in \R^6: [t_1,t_2]\subset [t_1',t_2']\ \mbox{and}\ \delta\leq \delta'\}.
\end{equation}

\begin{definition}[The rank invariant of a DMS]\label{def:rank invariant_brief} Let $\gamma_X$ be any DMS. For each non-negative integer $k$, the $k$-th rank invariant of $\gamma_X$ is a function $\rk_k(\gamma_X):\underline{\R^6}\rightarrow \Z_+$ defined as
\[\rk_k(\gamma_X)\left(t_1,t_2,\delta,t_1',t_2',\delta'\right):=\mathrm{rank} \left(\Hrm_k\left(\rips\left(X,\bigvee_{[t_1,t_2]}d_X\right)\hookrightarrow \mathcal{R}_{\delta'}\left(X,\bigvee_{[t_1',t_2']}d_X\right)\right)\right).
\]
See Figure \ref{fig:3d filtration}.
\end{definition}

In Section \ref{sec:Crocker} we compare the rank invariant of a DMS with the \emph{CROCKER-plots} introduced in \cite{topaz}.

\begin{definition}[The Betti-0 function of a DMS]\label{def:betti0}Let $\gammax$ be a DMS. We define the \emph{Betti-0 function} $\beta_0^{\gamma_X}:\U\times\R_+\rightarrow \Z_+$ of $\gamma_X$ by sending each $(I,\delta)\in \U\times \R_+$ to the dimension of  $\Hrm_0\left(\rips\left(X,\bigvee_{I}d_X \right)\right)$.
\end{definition}

 \begin{example}\label{ex:betti-0} Consider the DMSs $\gamma_X$ and $\gamma_Y$ given as the dynamic point clouds $X_r(\cdot)$ and $Y_r(\cdot)$ in Figure \ref{fig:intro} respectively. The Betti $0$-functions of $\gamma_X$ and $\gamma_Y$ are illustrated in Figure \ref{fig:betti_zero}.
\end{example}

It is not difficult to check that if $I\subset I'$ in $\U$ and $\delta\leq\delta'$ in $\R_+$, then $\beta_0^{\gamma_X}(I,\delta)\geq \beta_0^{\gamma_X}(I',\delta')$. This monotonicity is a special feature of Betti-$0$ functions, which is not shared by \emph{other Betti-$k$ functions} for $k\geq 1$. We will exploit this monotonicity property to metrize the collection of Betti-$0$ functions and in turn to obtain a tight lower bound for $\dintl$ or $\dgh$. Also, we remark that when $\gamma_X$ is a constant DMS (Example \ref{ex:constant} \ref{item:constant1}), $\beta_0^{\gamma_X}$ is constant with respect to the first factor.

\begin{figure}
    \begin{center}
    \includegraphics[width=\textwidth]{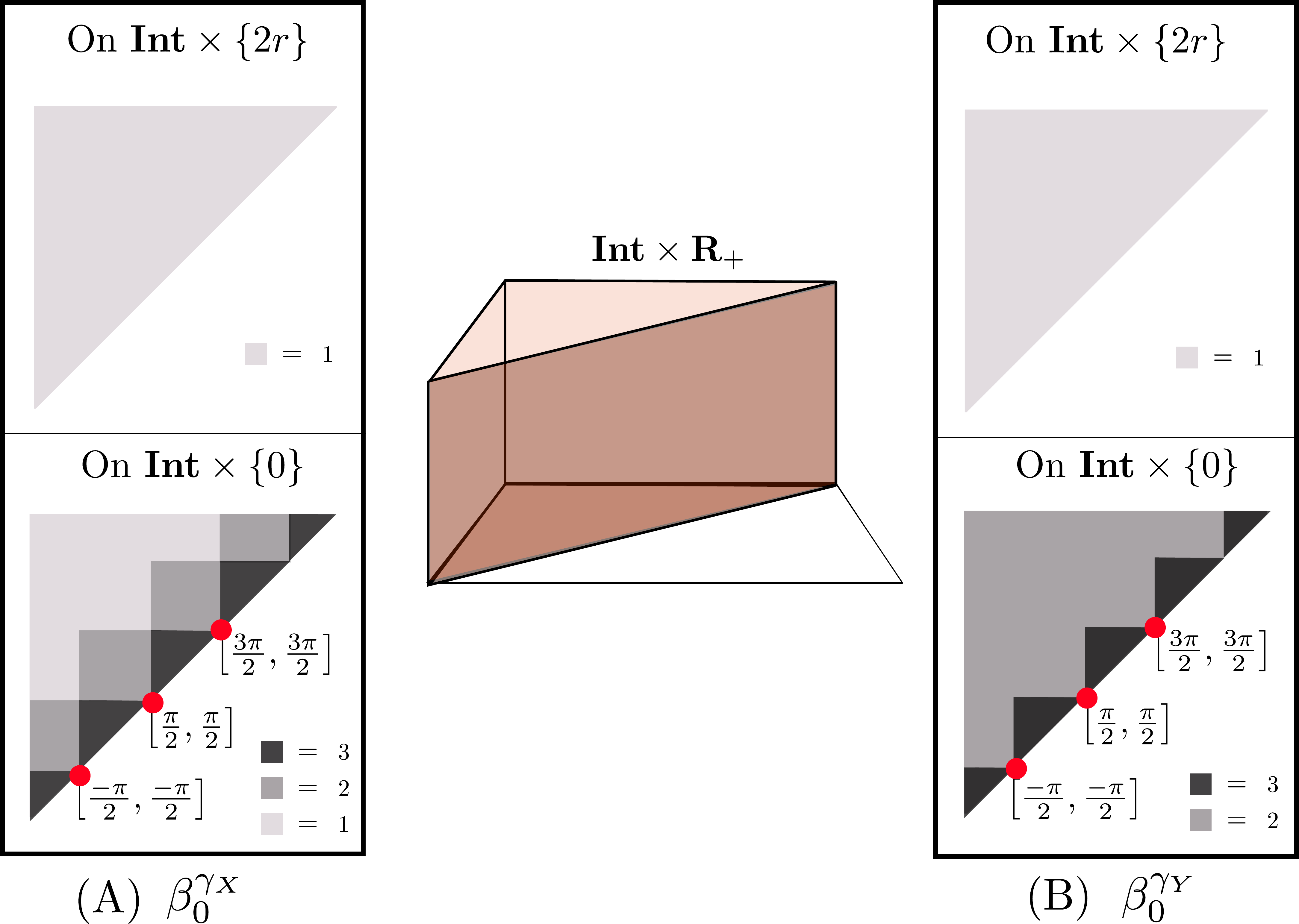}
    \caption{(The Betti-$0$ functions $\beta_0^{\gamma_X},\beta_0^{\gamma_Y}$ of the DMSs in Figure \ref{fig:intro}) 
     The middle figure represents the domain  $\U\times\R_+$ (Figure \ref{fig:3d filtration}) of $\beta_0^{\gamma_X}$ and $\beta_0^{\gamma_Y}$. (A) and (B) illustrate the value of $\beta_0^{\gamma_X}$ and $\beta_0^{\gamma_Y}$ respectively on the horizontal half-planes $\U\times \{0\}$ (bottom) and $\U\times \{2r\}$  (top).
     In particular, if $\delta\in [2r,\infty)$, $\beta_0^{\gamma_X}(I,\delta)= 1$ for all $I\in \U$. The same properties hold for $\beta_0^{\gamma_Y}$.   }
    \label{fig:betti_zero}
    \end{center}
\end{figure}

\section{Interleaving distance}\label{sec:interleaving for various categories}
In this section we review the \emph{interleaving distance} for $\R^d$-indexed functors \cite{botnan2018algebraic,CCG09,lesnick}. In particular, the interleaving distance \emph{between integer-valued functions} (Section \ref{subsec:interleaving}) will be utilized for obtaining a computationally tractable lower bound for $\dintl$.
\subsection{Interleaving distance}\label{sec:details about the interleaving}

\paragraph{Natural transformations.} We recall the notion of \emph{natural transformations} from category theory \cite{mac2013categories}: Let $\mathcal{C}$ and $\mathcal{D}$ be any categories and let $F,G:\mathcal{C}\rightarrow \mathcal{D}$ be any two functors. A natural transformation $\varphi:F\Rightarrow G$ is a collection of morphisms $\varphi_c: F_c\rightarrow G_c$ in $\mathcal{D}$  for all objects $c\in\mathcal{C}$ such that for any morphism $f:c\rightarrow c'$ in $\mathcal{C}$, the following diagram commutes:

\begin{center}
\begin{tikzcd}
	F_c \arrow{r}{F(f)} \arrow{d}{\varphi_{c}}
	&F_{c'} \arrow{d}{\varphi_{c'}}\\
	G_{c} \arrow{r}{G(f)} &G_{c'}.
\end{tikzcd}
\end{center}
Natural transformations $\varphi:F\rightarrow G$ are considered as morphisms in the category $\mathcal{D}^\mathcal{C}$ of all functors from $\mathcal{C}$ to $\mathcal{D}.$

\paragraph{The interleaving distance between $\R^d$-indexed functors.} In what follows, for any $\eps\in [0,\infty)$, we will denote the vector $\eps(1,\ldots,1)\in \R^d$ by $\vec{\eps}$. The dimension $d$ will be clearly specified in context.

\begin{definition}[$\mathbf{v}$-shift functor]\label{def:eps-shifting} Let $\mathcal{C}$ be any category.  For each $\mathbf{v}\in [0,\infty)^n$,  the $\bv$-shift functor $(-)(\bv):\C^{\R^d}\rightarrow \C^{\R^d}$ is defined as follows: 
\begin{enumerate}[label=(\roman*)]
    \item (On objects) Let $F:\R^d\rightarrow \mathcal{C}$ be any functor. Then the functor $F(\mathbf{v}):\R^d\rightarrow \mathcal{C}$ is defined as follows: For any $\ba\in \R^d$, 
	\[F(\mathbf{v})_\ba:=F_{\ba+\bv}.\]
	Also, for another $\ba'\in \R^d$ such that $\ba\leq \ba'$ we define
	\[F(\mathbf{v})(\ba\leq\ba'):= F\left(\ba+\bv\leq \ba'+\bv\right).\]
In particular, if $\mathbf{v}=\vec{\eps}\in[0,\infty)^d$, then we simply write $F(\eps)$ in lieu of $F(\vec{\eps})$.	

    \item (On morphisms) Given any natural transformation $\varphi:F\Rightarrow G$, the natural transformation $\varphi(\bv):F(\bv)\Rightarrow G(\bv)$ is defined as $\varphi(\bv)_\ba=\varphi_{\ba+\bv}:F(\bv)_\ba\rightarrow G(\bv)_\ba$ for each $\ba\in \R^d$.
\end{enumerate}
 
\end{definition}

For any $\bv\in [0,\infty)^d$, let $\varphi_F^\bv: F \Rightarrow F(\bv)$ be the natural transformation whose restriction to each $F_\ba$ is the morphism $F(\ba\leq \ba+\bv)$ in $\C$. When $\bv=\vec{\eps}$, we denote $\varphi_F^\bv$ simply by $\varphi_F^\eps$.  

\begin{definition}[$\bv$-interleaving between $\R^d$-indexed functors]\label{def:interleaving_general} Let $\mathcal{C}$ be any category. Given any two functors $F,G:\R^d\rightarrow \mathcal{C},$ we say that they are \emph{$\bv$-interleaved} if there are natural transformations $f:F\Rightarrow G(\bv)$ and $g:G\Rightarrow F(\bv)$  such that 
	\begin{enumerate}[label=(\roman*)]
		\item $g(\bv)\circ f = \varphi_F^{2\bv}$,
		\item $f(\bv)\circ g = \varphi_G^{2\bv}$.
	\end{enumerate} 

In this case, we call $(f,g)$ a \emph{$\bv$-interleaving pair}. When $\bv=\eps(1,\ldots,1)$, we simply call $(f,g)$ \emph{$\eps$-interleaving pair}. The interleaving distance between $F$ and $G$ is defined as
\begin{equation}\label{eq:interleaving0}
    \dintd^\C(F,G):=\inf\{\eps\in [0,\infty):F,G\ \mbox{are $\vec{\eps}$-interleaved}\},
\end{equation}
	where we set $\dintd^\C(F,G)=\infty$ if there is no $\eps$-interleaving pair between $F$ and $G$ for any $\eps\in [0,\infty)$. Then $\dintd^\C$ is an extended pseudo-metric for $\C$-valued  $\R^d$-indexed functors. We drop the subscript $d$ from $\dintd^{\C}$  when confusion is unlikely.
\end{definition}

\begin{remark}\label{rem:free subposet}
\begin{enumerate}[label=(\roman*)]
    \item  Let $\R^{\Diamond}$ denote the poset either of $\R$ or $\R^{\mathrm{op}}$. The interleaving distance $\dint^\C$ is also defined in the similar way for $\R^d$-indexed modules, where the poset $\R^d$ is equipped with the product partial order $\R^{\Diamond}\times\R^{\Diamond}\times\ldots\times\R^{\Diamond}.$ \label{item:free subposet1}
    \item Let $\Pb$ be any non-empty \emph{upper set} of $\R^d$: For every $p\in\Pb$, $U(p):=\{q\in \R^d: q\geq p\}$ is contained in $\Pb$. Then, we can define the interleaving distance between $\Pb$-indexed modules in the manner described by Definition \ref{def:interleaving_general}. \label{item:upper set}
    
\end{enumerate}

\paragraph{Full interleaving.} By $\sets$, we mean the category of sets with set maps as morphisms. Also, by $\vect$, we mean the category of vector spaces over a fixed field $\F$, with linear maps as morphisms. 

Let $\C$ be either $\Sets$ or $\vect$.  Given any $F,G:\R^d\rightarrow \C$, suppose that $(f,g)$ is an $\eps$-interleaving pair between $F$ and $G$. For \emph{each} $\ba\in \R^d$, if $f_\ba:F_\ba\rightarrow G_{\ba+\vec{\eps}}$ and $g_\ba:G_\ba\rightarrow F_{\ba+\vec{\eps}}$ are surjective, then we call $(f,g)$ a \emph{surjective} $\eps$-interleaving pair. If there exists a surjective $\eps$-interleaving between $F$ and $G$, we say that $F$ and $G$ are \emph{fully} $\eps$-interleaved. We define
\[\bdintd^\C(F,G):=\inf\left\{\eps\in [0,\infty):\ \mbox{$F,G$ are \emph{fully} $\vec{\eps}$-interleaved} \right\}.\]

We drop the subscript $d$ from $\bdintd^\C$  when confusion is unlikely. By definition, for any $F,G:\R^d\rightarrow \C$, it is clear that $\dintd^\C(F,G)\leq \bdintd^\C(F,G)$.  Also, it is not difficult to check that $\bdintd^\C$ is an extended pseudometric on $\ob(\C^{\R^d})$. 

By utilizing the full interleaving distance $\bdint^\C$, we obtain a lower bound for $\dintl$ as well as a new lower bound for the Gromov-Hausdorff distance (Theorem \ref{thm:betti-0 stability}, Remark \ref{thm:tighter} and Theorem \ref{thm:better}). 
    
\end{remark}
\subsection{Interleaving distance between integer-valued functions}\label{subsec:interleaving}

In this section we consider the interleaving distance between monotonic integer-valued functions by regarding them as functors.

\paragraph{Poset-valued maps.} Let $\Pb$ and $\Q$ be any two posets. Suppose that $f:\Pb\rightarrow \Q$ is any (monotonically) \emph{increasing} map, i.e. for any $p\leq q$  in $\Pb$, $f(p)\leq f(q)$. Then, by regarding $\Pb,\Q$ as categories, $f$  can be regarded as a \emph{functor}. On the other hand, suppose that $g:\Pb\rightarrow \Q$ is any (monotonically) \emph{decreasing} map, i.e. for any $p\leq q$ in $\Pb$, $f(p)\geq f(q)$. Then, $g:\Pb\rightarrow \Qop$ can also be called a functor.

\paragraph{The interleaving distance between integer-valued functions.} Let $d$ be a positive integer. Let $\R^d$ be the poset, where $\ba=(a_1,\ldots,a_d)\leq\bb=(b_1,\ldots,b_d)$ in $\R^d$ if and only if $a_i\leq b_i$ for each $i=1,\ldots,d$. For any $\eps>0$, let $\vec{\eps}:=\eps(1,\ldots,1)\in \R^d$. Consider any non-increasing integer-valued function $F:\R^d\rightarrow \Z_+$. Note that $F$ can be regarded as a \emph{functor} from the poset cateogory $\R^d$ to the other poset category $\Zop$. Since $\Zop$ is a \emph{thin} category, given another functor $G:\R^d\rightarrow \Zop$, the interleaving distance (Definition \ref{def:interleaving_general}) between $F$ and $G$ can be written as
\[\dintd^{\Zop}(F,G)=\inf\{\eps\in[0,\infty):\ \forall \ba\in \R^d, F_{\ba}\geq G_{\ba+\vec{\eps}},\ \mbox{and}\ G_{\ba}\geq F_{\ba+\vec{\eps}} \}.\]
The computational complexity for $\dintd^{\Zop}$ is provided in Theorem \ref{thm:cost2}. We will use $\dintd$, or even more simply $\dint$ in place of $\dintd^{\Zop}$  when confusion is unlikely.

\begin{remark} The metric $\dint$ is closely related to the erosion distance \cite{patel2018generalized}. See Remark \ref{rem:erosion}.
\end{remark}

\section{Stability theorems for persistent homology features of DMSs}\label{sec:overview}

In this section we establish the main results of this paper: namely, stability of the rank invariant and Betti-$0$ function of DMSs (Section \ref{subsec:stability}). We interpret these stability theorems as a generalization of the standard stability results for (static) metric spaces (Section \ref{sec:standard}).

\subsection{Stability theorems}\label{subsec:stability}

Recall the spatiotemporal Rips filtration $\U\times \R_+\rightarrow \simp$ of a DMS (Definition \ref{def:spatiotemporal Rips}). The poset $\U\times \R_+$ can be regarded as an upper set of $\R_\times^3$ (Remarks \ref{rem:subposet} and \ref{rem:free subposet} \ref{item:upper set}) and thus we can utilize $\dint^{\mathbf{Vec}}$ for comparing $(\U\times \R_+)$-indexed persistence modules.

\begin{theorem}[Stability of spatiotemporal persistence modules induced by DMSs]\label{thm:main2}
Let $\gammax$ and $\gammay$ be any two DMSs. 
Then for any $k\in \Z+$,
	\begin{equation}\label{eq:main2}
	    \dint^\mathbf{Vec}\left(\Hrm_{k}(\ripss(\gamma_X)),\Hrm_{k}(\ripss(\gamma_Y))\right)  \leq 2 \cdot \dintl(\gamma_X,\gamma_Y).
	\end{equation}
In particular, when $k=0$, the $\dint^\vect$ in the LHS of the above inequality can be promoted to the full interleaving $\bdint^\vect$.
\end{theorem}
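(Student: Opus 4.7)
The plan is, for each $\eps > \dintl(\gamma_X,\gamma_Y)$, to construct an explicit $\vec{2\eps}$-interleaving pair between the $(\U\times\R_+)$-indexed modules $\Hrm_k(\ripss(\gamma_X))$ and $\Hrm_k(\ripss(\gamma_Y))$; under the identification of Remark~\ref{rem:subposet} with $\Rop\times\R\times\R$, this $\vec{2\eps}$-shift sends $(I,\delta)$ to $(I^{2\eps},\delta+2\eps)$, where $I^{2\eps}=[t_1-2\eps,t_2+2\eps]$. Fix an $\eps$-tripod $\tripod$ as in Definition~\ref{def:distortion}. The first technical observation is an ``interval-level'' version of the slack inequality: for every $I=[t_1,t_2]\in\U$, because $\bigcup_{t\in I}[t]^\eps=I^\eps$, taking $\min_{t\in I}$ on both sides of the two inequalities in (\ref{eq:distor}) yields
\[\varphi_X^*\!\left(\bigvee\nolimits_{I^\eps} d_X\right)\ \leq\ \varphi_Y^*\!\left(\bigvee\nolimits_{I} d_Y\right) + 2\eps \qquad \text{and its $X\leftrightarrow Y$ swap},\]
as functions on $Z\times Z$. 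This will govern how shifts in $I$ and in $\delta$ trade off against each other.

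To construct the interleaving I would choose arbitrary set-theoretic sections $\psi_X:X\to Z$ and $\psi_Y:Y\to Z$ of $\varphi_X,\varphi_Y$, and set $f:=\varphi_X\circ\psi_Y:Y\to X$ and $g:=\varphi_Y\circ\psi_X:X\to Y$. The interval-level inequality above implies that $f$ extends to a simplicial map $\rips_\delta(Y,\bigvee_I d_Y)\to\rips_{\delta+2\eps}(X,\bigvee_{I^\eps} d_X)$ for every $(I,\delta)$, and likewise for $g$. Applying the same inequality with two \emph{different} sections $\psi_Y^{1},\psi_Y^{2}$ (taking $z=\psi_Y^1(y),\,z'=\psi_Y^2(y')$) shows that the resulting simplicial maps are pairwise contiguous, so the induced map on $\Hrm_k$ is independent of the choice of section. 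Post-composing with the structural inclusion $(I^\eps,\delta+2\eps)\leq(I^{2\eps},\delta+2\eps)$ then promotes these into natural transformations
\[f_*:\Hrm_k(\ripss(\gamma_Y))\Rightarrow \Hrm_k(\ripss(\gamma_X))(\vec{2\eps}),\quad g_*:\Hrm_k(\ripss(\gamma_X))\Rightarrow \Hrm_k(\ripss(\gamma_Y))(\vec{2\eps}),\]
with naturality in $(I,\delta)$ following routinely from the commutativity of the associated diagrams of simplicial inclusions.

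The main obstacle is verifying the interleaving equations $g_*(\vec{2\eps})\circ f_* = \varphi^{\vec{4\eps}}_{\Hrm_k(\ripss(\gamma_Y))}$ and its symmetric counterpart. It suffices to show that, at the simplicial level, every simplex $\sigma=\{y_0,\ldots,y_k\}$ of $\rips_\delta(Y,\bigvee_I d_Y)$ together with its image $g(f(\sigma))$ fits inside a single simplex of $\rips_{\delta+4\eps}(Y,\bigvee_{I^{2\eps}} d_Y)$; then $g\circ f$ and the structural inclusion into this larger complex are contiguous and hence agree on $\Hrm_k$. Because $\bigvee_I d_Y$ is a \emph{minimum} over times rather than a fixed metric, one cannot simply iterate the slack inequality via a single triangle inequality. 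Instead, for each pair one must pick a witness time $s^*\in I$ realizing $\bigvee_I d_Y(y_i,y_j)$, apply the slack inequality to obtain an auxiliary time $s^{**}\in[s^*]^\eps$, and then exploit the chain of inclusions $[s^{**}]^\eps\subset I^{2\eps}$ to convert pointwise-in-time estimates into the required $4\eps$-bound for $\bigvee_{I^{2\eps}} d_Y$. This careful bookkeeping of nested interval inflations is where the argument is most delicate.

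Finally, for the strengthening to a full interleaving when $k=0$, one verifies that $f_*$ and $g_*$ are surjective on $\Hrm_0$. Given $x\in X$, pick $z\in\varphi_X^{-1}(x)$ and set $y:=\varphi_Y(z)$; since both $z$ and $\psi_Y(y)$ lie in $\varphi_Y^{-1}(y)$, applying the slack inequality to this pair gives $\bigvee_{[t]^\eps}d_X(x,f(y))\leq 2\eps$ for every $t$, and in particular $\bigvee_{I^{2\eps}} d_X(x,f(y))\leq 2\eps\leq\delta+2\eps$. Hence $x$ and $f(y)$ are joined by an edge in $\rips_{\delta+2\eps}(X,\bigvee_{I^{2\eps}} d_X)$, showing $[x]=f_*([y])$ in $\Hrm_0$; symmetric reasoning handles $g_*$. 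Letting $\eps\searrow\dintl(\gamma_X,\gamma_Y)$ completes the proof.
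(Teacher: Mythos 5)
Your proposal is correct and follows essentially the same route as the paper's proof: fix an $\eps$-tripod, upgrade the condition (\ref{eq:distor}) to the interval-level inequality (the paper's Remark \ref{rem:inequality}), use sections of the tripod legs to obtain vertex maps that are simplicial at suitably shifted indices, and verify the interleaving relations up to contiguity --- note that your ``witness-time'' bookkeeping amounts exactly to composing the interval-level inequality along $I\subset I^{\eps}\subset I^{2\eps}$, so this step is less delicate than you suggest. The only cosmetic differences are your use of the symmetric $\vec{2\eps}$-shift where the paper uses the shift $\eps(-1,1,2)$, and your direct surjectivity check on $\Hrm_0$ for the full-interleaving claim, which the paper instead derives from surjectivity at the level of the spatiotemporal dendrogram (Theorem \ref{thm:stability of spatiotemporal dendrogram}).
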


We remark that the promotion of $\dint^\vect$ to $\bdint^\vect$ for $k=0$ is crucial for proving Theorem \ref{thm:betti-0 stability} below. See Section \ref{sec:details about PH} for the proof of Theorem \ref{thm:main2}. This stability implies that $\dint^{\vect}$ between 3-dimensional persistence modules serves as a lower bound for $\dintl$.  Since computing $\dint^\vect$ between 3-dimensional persistence modules is prohibitive \cite{botnan2018algebraic}, we make use of the \emph{rank invariants/Betti-$0$ functions} of DMSs (Definitions \ref{def:rank invariant_brief} and \ref{def:betti0}) and the interleaving distance $\dint$ between integer-valued functions (Section \ref{subsec:interleaving}) to obtain a lower bound for $\dintl$ as below.

\paragraph{Adapted rank invariant of a DMS.} The set $\underline{\R^6}$ in (\ref{eq:R6 subposet}) is \emph{not} an upper set (Remark \ref{rem:free subposet} \ref{item:upper set}) of the poset 
\begin{equation}\label{eq:R6 poset}
    \R_\times^6 :=\R\times\Rop\times\Rop\times\Rop\times \R\times\R,
\end{equation} into which $(\U\times\R_+)^\mathrm{op}\times (\U\times\R_+)$ can be embedded.  In order to ensure that we are in a position to utilize the metric $\dint$ for comparing rank invariants of DMSs, we extend the domain of the rank invariant of a DMS to the poset $\R_\times^6$. Given any $(v_1,v_2,v_3)\in\R^3$, we write $(v_1,v_2,v_3)\in \U\times \R_+$ if $v_1\leq v_2$ and $v_3\in \R_+$. 

Any element $\ba=(a_1,a_2,a_3,a_4,a_5,a_6)\in \R^6$, is  called  \emph{admissible}, if $\ba$ is obtained by concatenating a comparable pair of elements in $\U\times \R_+$, i.e.
both $(a_1,a_2,a_3)$ and $(a_4,a_5,a_6)$ belong to $\U\times\R_+$ and $(a_1,a_2,a_3)\leq (a_4,a_5,a_6)$ in  $\U\times\R_+$. Otherwise, $\ba$ is called \emph{non-admissible}. In particular, $\ba$ is called \emph{trivially} non-admissible, if there is no admissible $\bb\in \R^6$ such that $\bb<\ba$ in the poset $\R_\times^6$.

\begin{definition}[Adapted rank invariant of a DMS]\label{def:the rank invariant of a DMS}Let $\gammax$ be any DMS and let $k\in \Z_+$. We define the map $\rk_k(\gamma_X):\R^6\rightarrow \Z_+\cup\{\infty\}$, called the \emph{$k$-th rank invariant of $\gamma_X$}, as follows: For $\ba=(a_1,\ldots,a_6)\in \R^6$,

\[\rk_k(\gamma_X)(\ba):=\begin{cases} \mathrm{rank}\left( \Hrm_k\left( \rips(\bigvee_{I}d_X)\hookrightarrow \mathcal{R}_{\delta'}(\bigvee_{I'}d_X)\right) \right),&\mbox{$\ba$ is admissible,}\\ \infty,&\mbox{$\ba$ is trivially non-admissible,}\\
0,&\mbox{otherwise.}
\end{cases}\]
where $I=[a_1,a_2]$, $I'=[a_4,a_5]$, $\delta=a_3$ , and $\delta'=a_6$.
\end{definition}

Note that when $\ba\in\R^6$ is a concatenation of a \emph{repeated} pair $([t_0,t_0],\delta_0),([t_0,t_0],\delta_0)\in \U\times\R_+$, i.e. $\ba=(t_0,t_0,\delta_0,t_0,t_0,\delta_0)$, then \[\rk_0(\gamma_X)(\ba)=\dim\left(\Hrm_0\left(\mathcal{R}_{\delta_0}(X,d_X(t_0))\right)\right)=\beta_0^{\gamma_X}(t_0,t_0,\delta_0)\ \ \mbox{(Definition \ref{def:betti0}).}\] 

We can regard $\rk_k(\gamma_X)$ as a \emph{functor} $\R^6_\times \rightarrow  (\Z_+\cup\{\infty\})^{\mathrm{op}}$:

\begin{proposition}\label{prop:decreasing property}Let $\gamma_X$ be any DMS. For any $\ba,\bb \in \R_\times ^6$ with $\ba\leq \bb$, 
\[\rk_k(\gamma_X)(\ba)\geq \rk_k(\gamma_X)(\bb)\ \ \mbox{in $\Z_+\cup\{\infty\}$.}\]
\end{proposition}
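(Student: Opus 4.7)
The plan is to break the argument into cases according to whether $\ba$ and $\bb$ are admissible, trivially non-admissible, or non-admissible but not trivially. The case where both $\ba$ and $\bb$ are admissible carries the substantive geometric content; the remaining cases reduce to checking that the value $\infty$ for trivially non-admissible elements and the value $0$ for non-admissible-but-not-trivially elements propagate correctly along $\leq$ in $\R_\times^6$.

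For the main case, I would first unpack what $\ba \leq \bb$ means under the conventions $\R^6_\times=\R\times\Rop\times\Rop\times\Rop\times\R\times\R$: namely $a_1\leq b_1$, $a_2\geq b_2$, $a_3\geq b_3$, $a_4\geq b_4$, $a_5\leq b_5$, $a_6\leq b_6$. Writing $I=[a_1,a_2]$, $I'=[a_4,a_5]$, $J=[b_1,b_2]$, $J'=[b_4,b_5]$ and using admissibility of $\ba$ and $\bb$, this becomes the chain of interval inclusions $J\subseteq I\subseteq I'\subseteq J'$ together with the scale comparisons $b_3\leq a_3\leq a_6\leq b_6$. Since $I\subseteq I'$ forces $\bigvee_{I'}d_X\leq \bigvee_{I}d_X$ pointwise, and since $\rips(X,d)\subseteq\rips(X,d')$ whenever $d'\leq d$, these inclusions combine with the scale comparisons to yield a chain of simplicial inclusions
\[
\mathcal{R}_{b_3}\!\bigl(X,\tfrac{}{}{\textstyle\bigvee_J} d_X\bigr)\hookrightarrow \mathcal{R}_{a_3}\!\bigl(X,{\textstyle\bigvee_I}d_X\bigr)\hookrightarrow \mathcal{R}_{a_6}\!\bigl(X,{\textstyle\bigvee_{I'}}d_X\bigr)\hookrightarrow \mathcal{R}_{b_6}\!\bigl(X,{\textstyle\bigvee_{J'}}d_X\bigr).
\]
Applying $\Hrm_k$ produces linear maps $f_1,g,f_2$ whose composite has rank $\rk_k(\gamma_X)(\bb)$ while $g$ itself has rank $\rk_k(\gamma_X)(\ba)$. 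The elementary fact that $\mathrm{rank}(f_2\circ g\circ f_1)\leq \mathrm{rank}(g)$ (because the image of the composite sits inside $f_2(\im g)$, which has dimension at most $\dim\im g$) then concludes this case.

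For the boundary cases, there are three sub-situations to dispatch. If $\bb$ is non-admissible but not trivially so, then $\rk_k(\gamma_X)(\bb)=0$ by definition, and monotonicity is automatic. If $\bb$ is trivially non-admissible, I need $\rk_k(\gamma_X)(\ba)=\infty$ as well; this follows by contraposition, since any admissible $\bc<\ba$ would satisfy $\bc<\bb$ by transitivity, contradicting that $\bb$ admits no admissible strict lower bound. The remaining case is when $\bb$ is admissible while $\ba$ is non-admissible; here I would argue that $\ba$ must in fact be trivially non-admissible. The conditions on $\bb$ forced by admissibility together with $\ba\leq\bb$ give $a_1\leq b_1\leq b_2\leq a_2$ and $0\leq b_3\leq a_3$, so the only ways $\ba$ can fail admissibility without forcing $\bb$ to fail as well are through $a_1>a_2$, $a_3<0$, $a_4>a_1$, $a_2>a_5$, or $a_3>a_6$; each of these is easily seen to preclude the existence of any admissible $\bc\leq\ba$ (e.g.\ $a_4>a_1$ forces any candidate $\bc$ to have $a_4\leq c_4\leq c_1\leq a_1<a_4$), which means $\ba$ is trivially non-admissible and again $\rk_k(\gamma_X)(\ba)=\infty$.

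The main obstacle I anticipate is the bookkeeping around the distinction between trivially and non-trivially non-admissible elements. The definition uses a universal quantifier (``no admissible $\bc<\ba$''), and one has to verify carefully in each geometric failure mode of admissibility whether such a $\bc$ can exist. Once that combinatorial classification is in hand, the rest of the proof is a routine application of functoriality of $\rips$ and $\Hrm_k$, together with the rank inequality for a composition of linear maps.
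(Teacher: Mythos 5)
Your proposal is correct and follows essentially the same route as the paper: the admissible--admissible case is handled by the identical chain of four Rips inclusions and the rank-of-a-composite bound, and the boundary cases are dispatched by the same observations about how the values $\infty$ and $0$ propagate along $\leq$ in $\R^6_\times$. The only divergence is that where the paper isolates a convexity lemma for admissible vectors (Lemma \ref{lem:convexity}) to conclude that a non-admissible $\ba$ below an admissible $\bb$ must be trivially non-admissible, you inline an equivalent failure-mode analysis; note only that the modes $a_1>a_2$ and $a_3<0$ in your list cannot occur at all (you have already derived $a_1\le b_1\le b_2\le a_2$ and $a_3\ge b_3\ge 0$), and in isolation they would \emph{not} preclude an admissible $\bc\le\ba$, so your claim should be restricted to the three genuinely possible modes $a_4>a_1$, $a_2>a_5$ and $a_3>a_6$, for each of which your argument is correct.
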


See Section \ref{proof:decreasing property} for the proof. 
By virtue of Proposition \ref{prop:decreasing property},  $\dint$ can serve as a metric on the collection of all (adapted) rank invariants of DMSs. 

By combining Theorem \ref{thm:main2} with standard stability results for the rank invariant (Theorem \ref{prop:stability of the rank invariant}) we arrive at:

\begin{theorem}[Stability of the rank invariant of DMSs]\label{thm:rank k stability} 
Let $\gammax$ and $\gammay$ be any two DMSs. For any $k\in \Z_+$, 
\begin{equation}\label{eq:rank k stability}
    \dint\left(\rk_k(\gamma_X),\rk_k(\gamma_Y)  \right)\leq  2 \cdot \dintl(\gamma_X,\gamma_Y).
\end{equation}
\end{theorem}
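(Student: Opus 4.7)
The plan is to deduce inequality (\ref{eq:rank k stability}) by combining Theorem \ref{thm:main2} --- which provides a $2\eps$-interleaving between the persistence modules $\Hrm_k(\ripss(\gamma_X))$ and $\Hrm_k(\ripss(\gamma_Y))$ on $\U\times\R_+$ for $\eps := \dintl(\gamma_X,\gamma_Y)$ --- with the standard stability of the rank invariant (Theorem \ref{prop:stability of the rank invariant}), and then to extend the resulting interleaving of rank invariants from the admissible portion of $\R^6$ to all of $\R_\times^6$ via the value conventions $\infty/0$ of Definition \ref{def:the rank invariant of a DMS}. Applying Theorem \ref{prop:stability of the rank invariant} to the $2\eps$-interleaving produces, for every admissible $\ba\in\R^6$ whose upward shift $\ba^+ := \ba + 2\vec{\eps}$ in $\R_\times^6$ is also admissible,
\[\rk_k(\gamma_X)(\ba)\ \geq\ \rk_k(\gamma_Y)(\ba^+)\qquad\text{and}\qquad\rk_k(\gamma_Y)(\ba)\ \geq\ \rk_k(\gamma_X)(\ba^+).\]

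To promote these to a full $2\eps$-interleaving in $\dint^{\Zop}$ over the entire poset $\R_\times^6$, I would establish a \emph{propagation lemma} for non-admissibility under the upward shift: for $\ba\leq\bb$ in $\R_\times^6$, (i) if $\ba$ is admissible then $\bb$ is not trivially non-admissible, and (ii) if $\ba$ is non-trivially non-admissible then so is $\bb$. This is an elementary coordinate check once one observes that the three conditions responsible for non-trivial non-admissibility ($a_3<0$, $a_1>a_2$, or $a_4>a_5$) only tighten when moving upward in $\R_\times^6$, whereas the four conditions producing trivial non-admissibility ($a_6<0$, $a_2>a_5$, $a_4>a_1$, $a_3>a_6$) all strictly loosen under the upward shift from an admissible point. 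Coupled with Definition \ref{def:the rank invariant of a DMS}, the lemma reduces the interleaving inequality at any non-admissible $\ba$ to a tautology: either $\ba$ is trivially non-admissible and the LHS equals $\infty$, or $\ba^+$ is non-trivially non-admissible and the RHS equals $0$.

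The main obstacle I anticipate is the bookkeeping of orientations in the embedding $(\U\times\R_+)^{\mathrm{op}}\times(\U\times\R_+)\hookrightarrow\R_\times^6$: the source and target triples use opposite mixes of $\R$ and $\Rop$, so one has to verify carefully that the uniform upward $2\vec{\eps}$-shift in $\R_\times^6$ --- which in $\U\times\R_+$ coordinates shrinks the source interval and scale by $2\eps$ while growing the target interval and scale by $2\eps$ --- is, after the natural re-indexing, precisely the shift appearing in the classical rank-invariant stability inequality driven by the $2\eps$-interleaving. Once this correspondence between shift directions is set up, the remainder of the proof is a direct concatenation of Theorems \ref{thm:main2} and \ref{prop:stability of the rank invariant} with the propagation lemma.
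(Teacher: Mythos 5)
Your proof is correct and follows the paper's own route: the paper likewise obtains (\ref{eq:rank k stability}) by concatenating Theorem \ref{thm:main2} with Theorem \ref{prop:stability of the rank invariant}, leaving the bookkeeping at non-admissible points of $\R^6_\times$ implicit (it is essentially the case analysis carried out in the proof of Proposition \ref{prop:decreasing property}), which you make explicit via your propagation lemma. One small correction to that lemma: the condition $a_4>a_5$ forces \emph{trivial} non-admissibility (any $\bb<\ba$ has $b_4\geq a_4>a_5\geq b_5$), not non-trivial, so it belongs with the $\infty$-valued cases --- harmless for the argument, since the left-hand side of the interleaving inequality is then $\infty$.
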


\paragraph{Improvement for $k=0$.} By restricting ourselves to  clustering information (i.e.  $0$-th homology) of DMSs, we obtain a stronger lower bound for the metric $\dintl$. 
Namely, by regarding the Betti-$0$ function of a DMS (Definition \ref{def:betti0}) as a functor $\U\times\R_+\rightarrow \Z_+^{\mathrm{op}}$, we can compare any two Betti-0 functions of DMSs via the interleaving distance $\dint$ and we have:

\begin{theorem}[Stability of the Betti-0 function]\label{thm:betti-0 stability}
Let $\gamma_X$ and $\gamma_Y$ be any two DMSs. Then, 
\begin{equation}\label{eq:betti-0 stability}
	\dint\left(\beta_0^{\gamma_X},\beta_0^{\gamma_Y}\right)\leq 2\cdot \dintl(\gamma_X,\gamma_Y).
\end{equation}
\end{theorem}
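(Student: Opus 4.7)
The plan is to derive Theorem \ref{thm:betti-0 stability} as an almost immediate corollary of Theorem \ref{thm:main2} by exploiting the strengthening available in the $k=0$ case, namely the promotion from $\dint^{\vect}$ to the \emph{full} interleaving $\bdint^{\vect}$. The key observation is that a surjective linear map $f: V \to W$ forces $\dim W \leq \dim V$, so a full $\vec{\eps}$-interleaving of vector-space-valued functors descends, upon taking pointwise dimensions, to a $\vec{\eps}$-interleaving of the associated dimension functions viewed as functors into $\Z_+^{\mathrm{op}}$.

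More concretely, I would proceed as follows. First, identify $\U \times \R_+$ with the upper set of $\R_\times^3 = \Rop \times \R \times \R$ described in Remark \ref{rem:subposet}, so that a shift by $\vec{\eps}$ of a point $(I,\delta) = ([t_1,t_2],\delta)$ is $([t_1-\eps, t_2+\eps], \delta+\eps)$; this is the relevant shift for applying $\dint$ on both the $H_0$ persistence modules and the Betti-$0$ functions. Second, note that by definition $\beta_0^{\gamma_X}(I,\delta) = \dim H_0\bigl(\ripss(\gamma_X)\bigr)_{(I,\delta)}$, and similarly for $\gamma_Y$. Third, fix any $\eps > \dintl(\gamma_X, \gamma_Y)$ and apply Theorem \ref{thm:main2} with $k=0$: this yields a \emph{surjective} $2\eps$-interleaving pair $(f,g)$ between $H_0(\ripss(\gamma_X))$ and $H_0(\ripss(\gamma_Y))$. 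At every $(I,\delta) \in \U \times \R_+$ the surjectivity of
\[
f_{(I,\delta)}: H_0(\ripss(\gamma_X))_{(I,\delta)} \twoheadrightarrow H_0(\ripss(\gamma_Y))_{(I,\delta)+\vec{2\eps}}
\]
and of $g_{(I,\delta)}$ gives, upon taking dimensions, the two inequalities
\[
\beta_0^{\gamma_Y}\bigl((I,\delta)+\vec{2\eps}\bigr) \leq \beta_0^{\gamma_X}(I,\delta), \qquad \beta_0^{\gamma_X}\bigl((I,\delta)+\vec{2\eps}\bigr) \leq \beta_0^{\gamma_Y}(I,\delta),
\]
which is exactly what is needed for a $2\eps$-interleaving of $\beta_0^{\gamma_X}$ and $\beta_0^{\gamma_Y}$ viewed as functors to $\Z_+^{\mathrm{op}}$ (cf.\ the formula for $\dint^{\Zop}$ in Section \ref{subsec:interleaving}, applied to the upper set $\U \times \R_+$ of $\R_\times^3$). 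Thus $\dint(\beta_0^{\gamma_X}, \beta_0^{\gamma_Y}) \leq 2\eps$; letting $\eps \downarrow \dintl(\gamma_X,\gamma_Y)$ yields the theorem.

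The ``main obstacle'' here is really external to this statement: the whole argument rests on the full-interleaving refinement in the $k=0$ clause of Theorem \ref{thm:main2}. Without surjectivity, an arbitrary $\vec{\eps}$-interleaving between two persistence modules of vector spaces does \emph{not} in general give pointwise dimension inequalities in both directions — it only gives them at composed scales — so the cleanest bound on $\dint$ would deteriorate. Hence the work is to make sure, in the proof of Theorem \ref{thm:main2}, that for $k=0$ the interleaving natural transformations built from the $\eps$-tripod between $\gamma_X$ and $\gamma_Y$ act surjectively on $H_0$; geometrically this is plausible because each such map is induced by a simplicial map between Rips complexes of DMSs related through a surjective tripod, and surjectivity of the tripod together with the shrinking-distance property pushes connected components onto connected components. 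Once this is granted from Theorem \ref{thm:main2}, the derivation above is essentially a one-line dimension count.
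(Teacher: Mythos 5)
Your proposal is correct, and it reaches the bound by a route that is close in spirit but packaged differently from the paper's. The paper never invokes the $k=0$ clause of Theorem \ref{thm:main2} in its actual proof: instead it introduces the spatiotemporal SLHC dendrogram $\theta(\gamma_X):\U\times\R_+\to\Part(X)$, proves directly from an $(\eps/2)$-tripod that $\bdint^{\Sets}\left(\theta(\gamma_X),\theta(\gamma_Y)\right)\leq 2\cdot\dintl(\gamma_X,\gamma_Y)$ (Theorem \ref{thm:stability of spatiotemporal dendrogram}), identifies $\beta_0^{\gamma_X}$ with the dimension function $\dm\left(\theta(\gamma_X)\right)$, and concludes via Proposition \ref{prop:dimension stability via interleaving}\,(ii). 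You instead take the surjective $2\eps$-interleaving of the $\Hrm_0$ persistence modules promised by the $k=0$ promotion in Theorem \ref{thm:main2} and perform the dimension count in $\vect$; since a surjection cannot increase dimension, this is exactly Proposition \ref{prop:dimension stability via interleaving}\,(ii) with $\C=\vect$, and your treatment of the shift $([t_1,t_2],\delta)\mapsto([t_1-2\eps,t_2+2\eps],\delta+2\eps)$ on the upper set $\U\times\R_+$ and the passage to the infimum over $\eps>\dintl(\gamma_X,\gamma_Y)$ are both fine. The trade-off is where the real work lives: your derivation is a one-line corollary of an already-stated theorem, whereas the paper's detour through $\Sets$-valued dendrograms is precisely where the surjectivity is actually verified --- note that the proof of Theorem \ref{thm:main2} given in Section \ref{sec:details about PH} only constructs the contiguity interleaving and never checks that the induced $\Hrm_0$ maps are surjective, so the $k=0$ clause you lean on is, in the paper, substantiated only by the dendrogram argument. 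You correctly flag this dependence and sketch the right reason (every $y\in Y$ is tripod-related to some $x\in X$ with $\bigvee_{I^{2\eps}}d_Y\left(y,\phi(x)\right)\leq 2\eps\leq\delta+2\eps$, so $\phi$ meets every connected component), so there is no gap in your plan; a fully self-contained write-up would simply have to carry out that surjectivity argument, i.e., in effect reprove Theorem \ref{thm:stability of spatiotemporal dendrogram} or the $k=0$ clause itself.
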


We prove Theorem \ref{thm:betti-0 stability} in Section \ref{sec:details about SC}. Also, we remark that the LHSs of inequalities in (\ref{eq:rank k stability}) and (\ref{eq:betti-0 stability}) are computable in poly-time (Theorem \ref{thm:cost2}) using the well-known \emph{binary search} algorithm.

\begin{remark}[Sensitivity of the LHS in (\ref{eq:betti-0 stability})]\label{rem:sensitivity} Consider the DMSs $\gamma_X$ and $\gamma_Y$ given as in Example \ref{ex:betti-0}.  The value $\dint\left(\beta_0^{\gamma_X},\beta_0^{\gamma_
Y}\right)$ is at least $r$, as we will see below. This in turn implies that the metric $\dint$ is sensitive enough to discriminate (the Betti-$0$ functions of) $\gamma_X$ and $\gamma_Y$.
\end{remark}

\begin{proof}[Details about Remark \ref{rem:sensitivity}]\label{proof:sensitivity} 
Observe that
\[\bigvee_{\left[\frac{\pi}{2},\frac{3\pi}{2}\right]}d_X (x_i,x_j)=\begin{cases}2,& i=1,j=3 \\ 0,&\mbox{otherwise,} 
\end{cases}\hspace{15mm}\bigvee_{\left[\frac{\pi}{2},\frac{3\pi}{2}\right]}d_Y (y_i,y_j)=\begin{cases}1,&i=1,j=2\\ 2,& i=1,j=3 \\ 0,&\mbox{otherwise.} 
\end{cases}\]
Hence, the geometric realization of Rips complexes $\mathcal{R}_0(X,\bigvee_{\left[\frac{\pi}{2},\frac{3\pi}{2}\right]}d_X)$ and \newline $\mathcal{R}_0(Y,\bigvee_{\left[\frac{\pi}{2},\frac{3\pi}{2}\right]}d_Y)$ are illustrated in Figure \ref{fig:Rips complexes}.
\begin{figure}
    \centering
    \includegraphics[width=0.8\textwidth]{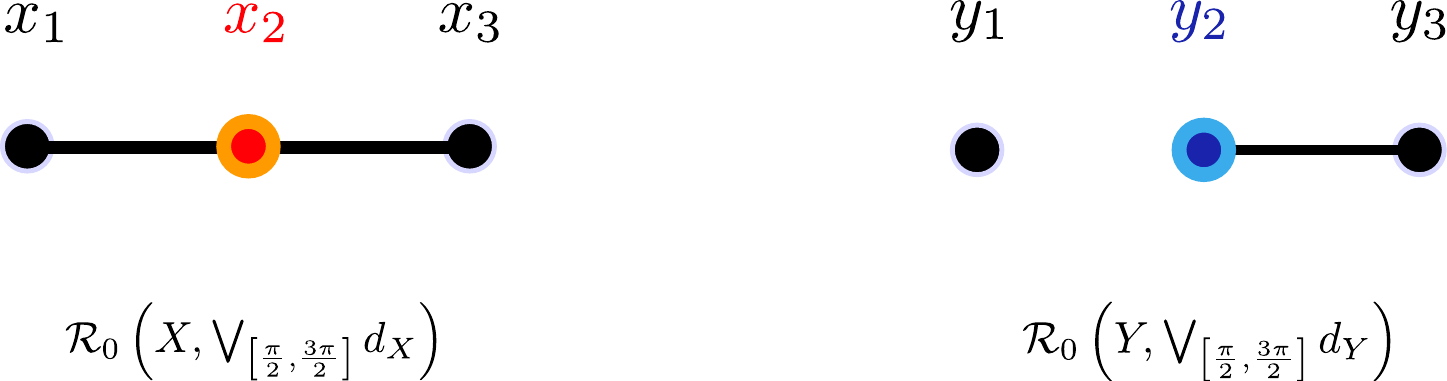}
    \caption{The geometric realization of $\mathcal{R}_0(X,\bigvee_{\left[\frac{\pi}{2},\frac{3\pi}{2}\right]}d_X)$ and \newline $\mathcal{R}_0(Y,\bigvee_{\left[\frac{\pi}{2},\frac{3\pi}{2}\right]}d_Y)$ for the DMSs $\gamma_X$ and $\gamma_Y$ in Example \ref{ex:betti-0}.}
    \label{fig:Rips complexes}
\end{figure}
By counting the number of connected components of these complexes, we have $\beta_0^{\gamma_X}\left(\left[\frac{\pi}{2},\frac{3\pi}{2}\right],0 \right)=1$ and  $\beta_0^{\gamma_Y}\left(\left[\frac{\pi}{2},\frac{3\pi}{2}\right],0 \right)=2$. Also, it is not difficult to check that for any $\eps\in [0,r)$, $\mathcal{R}_\eps(Y,\bigvee_{\left[\frac{\pi}{2}-\eps,\frac{3\pi}{2}+\eps\right]}d_Y)=\mathcal{R}_0(Y,\bigvee_{\left[\frac{\pi}{2},\frac{3\pi}{2}\right]}d_Y)$, so that \[\beta_0^{\gamma_X}\left(\left[\frac{\pi}{2},\frac{3\pi}{2}\right],0 \right)=1<2=\beta_0^{\gamma_Y}\left(\left[\frac{\pi}{2}-\eps,\frac{3\pi}{2}+\eps\right],\eps \right).\] 
By the definition of $\dint$, this inequality implies that $\dint\left(\beta_0^{\gamma_X},\beta_0^{\gamma_Y}\right)$ is at least $r$.

Next, we show that $\dint\left(\beta_0^{\gamma_X},\beta_0^{\gamma_Y}\right)\leq 2r$. For any $\eps\in[2r,\infty)$ and  any $I\in \U$, \[\beta_0^{\gamma_X}\left(I,\eps \right)=\beta_0^{\gamma_Y}\left(I,\eps \right)=1,\] 
which is illustrated in Figure \ref{fig:betti_zero}. Therefore, for any $([t_1,t_2],\delta)\in \U\times \R_+$, 
\[\beta_0^{\gamma_X}\left([t_1,t_2],\delta\right)\geq \beta_0^{\gamma_Y}\left([t_1-2r,t_2+2r],\delta+2r\right)=1,\]
\[\beta_0^{\gamma_Y}\left([t_1,t_2],\delta\right)\geq \beta_0^{\gamma_X}\left([t_1-2r,t_2+2r],\delta+2r\right)=1.\]
Therefore, we have $\dint\left(\beta_0^{\gamma_X},\beta_0^{\gamma_Y}\right)\leq 2r$.
\end{proof}

 In order to obtain a lower bound for $\dintl$ between two DMSs, computing the distance between the Betti-$0$ functions of the DMSs (the LHS of the inequaliy in  (\ref{eq:betti-0 stability})) is better than computing the distance between their $0$-th rank invariants (the LHS of the inequaliy in (\ref{eq:rank k stability})) : 

\begin{proposition}\label{prop:better-dynamic} For any two DMSs $\gammax$ and $\gammay$,
\begin{equation}\label{eq:inequality}
    \dintsix\left(\rk_0(\gamma_X),\rk_0(\gamma_Y)  \right)\leq  \dintthree(\beta_0^{\gamma_X},\beta_0^{\gamma_Y}).
\end{equation}
\end{proposition}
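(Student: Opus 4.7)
The approach rests on a simple observation specific to $k=0$: whenever $\ba=(a_1,\ldots,a_6)$ is admissible, both Rips complexes in the definition of $\rk_0(\gamma_X)(\ba)$ share the full $0$-skeleton $X$, so the inclusion $\mathcal{R}_{a_3}(X,\bigvee_{[a_1,a_2]}d_X)\hookrightarrow\mathcal{R}_{a_6}(X,\bigvee_{[a_4,a_5]}d_X)$ induces a \emph{surjection} on $\Hrm_0$. The rank of a surjective linear map equals the dimension of its codomain, so on admissible inputs
\[
\rk_0(\gamma_X)(\ba)\;=\;\dim\Hrm_0\!\left(\mathcal{R}_{a_6}\!\left(X,\bigvee_{[a_4,a_5]}d_X\right)\right)\;=\;\beta_0^{\gamma_X}([a_4,a_5],a_6),
\]
depending only on the target-side coordinates $(a_4,a_5,a_6)$.

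With this reduction in hand, the plan is to fix an arbitrary $\eps>\dintthree(\beta_0^{\gamma_X},\beta_0^{\gamma_Y})$, exhibit an $\eps$-interleaving between the $\R_\times^6$-indexed functors $\rk_0(\gamma_X),\rk_0(\gamma_Y)\colon\R_\times^6\to(\Z_+\cup\{\infty\})^{\mathrm{op}}$, and then take $\eps\searrow\dintthree(\beta_0^{\gamma_X},\beta_0^{\gamma_Y})$. Since $(\Z_+\cup\{\infty\})^\mathrm{op}$ is a thin category, this reduces to checking, for every $\ba\in\R^6$, the inequality $\rk_0(\gamma_X)(\ba)\geq \rk_0(\gamma_Y)(\ba+\vec{\eps})$ together with its symmetric partner, where $\ba+\vec{\eps}$ decodes in $\R_\times^6$ as $(a_1+\eps,\,a_2-\eps,\,a_3-\eps,\,a_4-\eps,\,a_5+\eps,\,a_6+\eps)$.

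I split the verification into three cases according to the admissibility type of $\ba$. In case (i), $\ba$ is admissible: the reduction rewrites the inequality as $\beta_0^{\gamma_X}([a_4,a_5],a_6)\geq \beta_0^{\gamma_Y}([a_4-\eps,a_5+\eps],a_6+\eps)$, which is precisely the assumed Betti-$0$ interleaving evaluated at $([a_4,a_5],a_6)$; and if $\ba+\vec{\eps}$ itself fails admissibility (only possible through a source-side defect $a_2-a_1<2\eps$ or $a_3<\eps$), one verifies it is not \emph{trivially} non-admissible, so the RHS is $0$ and the inequality is trivial. In case (ii), $\ba$ is trivially non-admissible: the LHS is $\infty$ and the inequality is vacuous. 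In case (iii), $\ba$ is non-admissible but not trivially so, which forces $a_1>a_2$ or $a_3<0$ while the other five defining inequalities hold; both source defects persist after the $\vec{\eps}$-shift, giving $\rk_0(\gamma_X)(\ba)=0$, and one checks $\ba+\vec{\eps}$ is likewise not trivially non-admissible, yielding $0\geq 0$.

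The only real obstacle is the bookkeeping step buried in cases (i) and (iii): one must confirm that the $\vec{\eps}$-shift can never turn a finite-rank $\ba$ into a \emph{trivially} non-admissible $\ba+\vec{\eps}$, which would otherwise collapse the RHS to $\infty$ and break the inequality. Each of the five trivially-non-admissible conditions ($a_4>a_5$, $a_6<0$, $a_4>a_1$, $a_2>a_5$, $a_3>a_6$) is incompatible with the constraints on $\ba$ guaranteed in cases (i) and (iii), and a short inspection confirms this. Once verified, the symmetric inequality follows identically from the symmetric half of the Betti-$0$ interleaving hypothesis, completing the proof.
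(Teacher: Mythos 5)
Your argument is correct and takes essentially the same route as the paper: the key step---surjectivity of the internal $\Hrm_0$ maps forces the rank to equal the dimension of the codomain, i.e.\ the Betti-$0$ value at the target triple, so an interleaving of Betti-$0$ functions transfers to one of rank invariants---is precisely how the paper deduces Proposition \ref{prop:better-dynamic} from Proposition \ref{prop:dim vs rk}. Your case analysis over admissible, trivially non-admissible, and otherwise non-admissible points just makes explicit the bookkeeping for the adapted rank invariant of Definition \ref{def:the rank invariant of a DMS} (the $\infty$ and $0$ conventions) that the paper leaves implicit in calling the proposition a corollary.
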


Proposition \ref{prop:better-dynamic} is a corollary of Proposition \ref{prop:dim vs rk}. The proof relies on the fact that all inner morphisms of the persistence modules $\Hrm_0\left(\ripss(\gamma_X) \right)$ and $\Hrm_0\left(\ripss(\gamma_Y) \right)$ are \emph{surjective}. 
 In Example \ref{ex:demonstration2}, we consider a concrete example of the bound provided in Proposition \ref{prop:better-dynamic}.

\subsection{Relationship with standard stability theorems}\label{sec:standard} 

The main goal of this section is to explain, when restricting ourselves to the class of constant DMSs (Example \ref{ex:constant} \ref{item:constant1}), how Theorems \ref{thm:main2}, \ref{thm:rank k stability} and \ref{thm:betti-0 stability} boil down to the well-known stability theorems for (static) metric spaces. Along the way, we also identify a new lower bound for the Gromov-Hausdorff distance, which is tighter than the bottleneck distance between the $0$-th persistence diagrams of Rips filtrations (Remark \ref{thm:tighter} and Theorem \ref{thm:better}).

 For $k\in \Z_+$, by post-composing the simplicial homology functor $\Hrm_k:\simp\rightarrow \vect$ (with coefficients in the field $\F$) to the Rips filtration $\ripsss(X,d_X)$ of a metric space $(X,d_X)$, we obtain the persistence module \[\Hrm_k\circ \ripsss(X,d_X):\R\rightarrow \vect.\]
 
 Let $\dgm_k(\ripsss(X,d_X))$ be the $k$-th persistence diagram of the Rips filtration  \newline $\ripsss(X,d_X)$.  Also, let $\bott$ be the bottleneck distance (Definition \ref{def:bottleneck}). Recall that $\dintl$ coincides with $\dgh$ on the class of constant DMSs (Remark \ref{rem:gromov generalization}).

\begin{remark}\label{rem:reduction to standard}Consider any two constant DMSs $\gamma_X\equiv(X,d_X)$ and $\gamma_Y\equiv(Y,d_Y)$. Then, for any $k\in \Z_+$, inequality (\ref{eq:main2}) 
reduces to:
\begin{equation}\label{eq:main2 reduction}
  \dint^{\mathbf{Vec}}(\Hrm_k\circ \ripsss(X,d_X),\Hrm_k\circ \ripsss(Y,d_Y))\leq 2\ \dgh((X,d_X),(Y,d_Y)),  
\end{equation}
or equivalently to
\begin{equation}\label{eq:bottleneck stability vs GH}
    \bott\left(\dgm_k\left(\ripsss(X,d_X)\right), \dgm_k\left(\ripsss(Y,d_Y)\right)\right)\leq 2\cdot\dgh\left((X,d_X),(Y,d_Y)\right),
\end{equation}
which are known in \cite{dghrips,chazal2014persistence}. In other words, the LHS and the RHS of inequality (\ref{eq:main2}) are respectively identical to the LHS and the RHS of inequalities (\ref{eq:main2 reduction}) or (\ref{eq:bottleneck stability vs GH}). 
\end{remark}

We define the \emph{rank invariant} of a finite metric space as follows:

\begin{definition}[The rank invariant of a metric space]\label{def:the rank invariant of a metric space}Let $(X,d_X)$ be any finite metric space and let $k\in \Z_+$. We define the map $\rk_k(X,d_X):\R^2\rightarrow \Z_+\cup\{\infty\}$, called the \emph{$k$-th rank invariant of $(X,d_X)$}, as follows: For $\ba=(\delta,
\delta')\in \R^2$,
\[\rk_k(X,d_X)(\ba)=\begin{cases} \mathrm{rank}\left(\Hrm_k\left(\rips(X,d_X)\hookrightarrow \mathcal{R}_{\delta'}(X,d_X) \right) \right),&\delta\leq\delta', \\ \infty,&\mbox{otherwise.}
\end{cases}\]
(cf. Definition \ref{def:the rank invariant of a DMS})
\end{definition}
In Definition \ref{def:the rank invariant of a metric space}, note that we can regard $\rk_k(X,d_X)$ as a functor \newline $\Rop\times \R\rightarrow  (\Z_+\cup\{\infty\})^{\mathrm{op}}$. Therefore, we can compare the rank invariants of any two finite metric metric spaces via the interleaving distance $\dint$. 

\begin{remark}\label{prop:rank k vs GH}Consider any two constant DMSs $\gamma_X\equiv(X,d_X)$ and $\gamma_Y\equiv(Y,d_Y)$. Then, for any $k\in \Z_+$, inequality (\ref{eq:rank k stability}) 
reduces to:
\begin{equation}\label{eq:rank k vs GH}
  \dint(\rk_k(X,d_X),\rk_k(Y,d_Y))\leq 2\ \dgh((X,d_X),(Y,d_Y)).  
\end{equation}
\end{remark}

\begin{remark}\label{rem:comparison} We also remark that the LHS of (\ref{eq:bottleneck stability vs GH}) is greater than equal to that of (\ref{eq:rank k vs GH}) by Corollary \ref{cor:rank vs barcode}: \begin{align*}
    \dint(\rk_k(X,d_X),\rk_k(Y,d_Y))&\leq \bott\left(\dgm_k\left(\ripsss(X,d_X)\right), \dgm_k\left(\ripsss(Y,d_Y)\right)\right)\\&\leq 2\cdot \dgh((X,d_X),(Y,d_Y)).
\end{align*}.
\end{remark}

\begin{definition}[The Betti-0 function of a finite metric space]\label{def:betti-0}
Let $(X,d_X)$ be any finite metric space. We define the \emph{Betti-$0$ function} $\beta_0^{(X,d_X)}:\R_+\rightarrow \Z_+$ of $(X,d_X)$ by sending each $\delta\in \R_+$ to the dimension of $\Hrm_0(\rips(X,d_X))$ (cf. Definition \ref{def:betti0}).
\end{definition}

Since $\beta_0^{(X,d_X)}$ is non-increasing function and $\R_+$ is an upper set of $\R$, we can compare any two Betti-$0$ functions via $\dint$.

\begin{remark}[Stability of the Betti-$0$ function]\label{thm:tighter}
Consider any two constant DMSs $\gamma_X\equiv(X,d_X)$ and $\gamma_Y\equiv(Y,d_Y)$. Then,  the inequality in (\ref{eq:betti-0 stability})  
reduces to: 
\begin{equation}\label{eq:betti0 vs GH}
    \dint\left(\beta_0^{(X,d_X)},\beta_0^{(Y,d_Y)} \right)\leq 2\  \dgh\left((X,d_X),(X,d_Y)\right).
\end{equation}
\end{remark}

In particular,  as a lower bound for $2\cdot\dgh$, the LHS of inequality (\ref{eq:betti0 vs GH}) is always as effective as the LHS of inequality (\ref{eq:bottleneck stability vs GH}) for $k=0$: 
 
 \begin{theorem}\label{thm:better}For any finite metric spaces $(X,d_X)$ and $(Y,d_Y)$,
    \[\bott\left(\dgm_0\left(\ripsss(X,d_X)\right), \dgm_0\left(\ripsss(Y,d_Y)\right)\right)\leq \dint\left(\beta_0^{(X,d_X)},\beta_0^{(Y,d_Y)} \right).\]
\end{theorem}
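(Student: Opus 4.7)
The strategy is to exploit the very special structure of the $0$-th persistence diagram of a Rips filtration of a finite metric space: every persistent feature is born at $\delta=0$, so the diagram is determined, up to a single essential class, by a multiset of death times. The plan is to read these deaths off the Betti-$0$ function and compare the two diagrams via the sorted matching.

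First I would record the structural input. For $(X,d_X)$ with $n:=|X|$, the Rips complex is empty for $\delta<0$, has all $n$ vertices and no edges at $\delta=0$, and only merges connected components as $\delta$ grows. Consequently the module $\Hrm_0\circ\ripsss(X,d_X)$ decomposes as $\bigoplus_{i=1}^n\F_{[0,d_i^X)}$ with deaths sorted in decreasing order $\infty=d_1^X\geq d_2^X\geq\cdots\geq d_n^X>0$, giving
\[\dgm_0(\ripsss(X,d_X))=\{(0,d_i^X):i=1,\ldots,n\},\qquad \beta_0^{(X,d_X)}(\delta)=\#\{i:d_i^X>\delta\}\ \text{for }\delta\geq 0.\]
The analogous facts hold for $Y$ with $m:=|Y|$; for bookkeeping I adopt the convention $d_k^X:=0$ for $k>n$, and similarly for $Y$, so every index is defined.

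Next I would translate an $\eps$-interleaving of $\beta_0^{(X,d_X)}$ and $\beta_0^{(Y,d_Y)}$ into a pointwise bound on the sorted deaths: for every $\eps>\dint(\beta_0^{(X,d_X)},\beta_0^{(Y,d_Y)})$ and every $k\geq 1$, $|d_k^X-d_k^Y|\leq\eps$. The proof is a brief contradiction: if $d_k^Y>d_k^X+\eps$, then choosing $\delta$ just below $d_k^Y-\eps$ gives $\beta_0^{(Y,d_Y)}(\delta+\eps)\geq k$ while $\beta_0^{(X,d_X)}(\delta)<k$, contradicting the interleaving inequality $\beta_0^{(X,d_X)}(\delta)\geq\beta_0^{(Y,d_Y)}(\delta+\eps)$; the symmetric argument rules out $d_k^X>d_k^Y+\eps$. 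For indices $k>\min(n,m)$, say $k>n$, applying the same condition at $\delta=0$ yields $\beta_0^{(Y,d_Y)}(\eps)\leq\beta_0^{(X,d_X)}(0)=n<k$, hence $d_k^Y\leq\eps$, and since $d_k^X=0$ one still gets $|d_k^X-d_k^Y|\leq\eps$.

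Finally I would exhibit the explicit matching witnessing the bottleneck bound: pair $(0,d_k^X)\leftrightarrow(0,d_k^Y)$ for $1\leq k\leq\min(n,m)$, and send the remaining $|n-m|$ off-diagonal points to the diagonal. In particular $d_1^X=d_1^Y=\infty$ matches the two essential classes at cost $0$. Each paired match has $\ell^\infty$-cost $|d_k^X-d_k^Y|\leq\eps$, and each unpaired point $(0,d_k)$ costs $d_k/2\leq\eps/2$ to send to the diagonal. Hence $\bott\bigl(\dgm_0(\ripsss(X,d_X)),\dgm_0(\ripsss(Y,d_Y))\bigr)\leq\eps$, and letting $\eps\downarrow\dint(\beta_0^{(X,d_X)},\beta_0^{(Y,d_Y)})$ gives the theorem. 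The only real bookkeeping step is the interaction between the essential class and unequal cardinalities; the conventions $d_1=\infty$ and $d_k=0$ for $k$ beyond the point count absorb both cleanly, so I do not anticipate any deeper obstacle.
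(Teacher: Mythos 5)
Your proposal is correct and follows essentially the same route as the paper's proof: both reduce the $0$-th diagrams to their sorted multisets of death times (padded with zeros to equalize cardinalities), translate the $\eps$-interleaving of the Betti-$0$ functions into a pointwise bound on corresponding sorted deaths, and conclude via the sorted matching, with the two essential classes matched at zero cost. The only differences are cosmetic (decreasing vs.\ increasing sort, and whether the surplus points are matched to padded zero-length bars or sent to the diagonal at half-persistence cost).
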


The proof is provided in Section \ref{sec:new lower bound}. Example \ref{ex:demonstration} below illustrates Theorem \ref{thm:better}.

\begin{example}\label{ex:demonstration} Let $X=\{x_1,x_2\}$. For any $\eps\in [0,\infty)$,  we define the two metrics $d_X$ and $d_X^\eps$ on $X$ as \[d_X(x_1,x_2)=1,\ \ \mbox{and}\  \ d_X^\eps(x_1,x_2)=1+\eps.\] 
By definition of $\dgh$ (Definitions \ref{def:the GH}) and $\dint$(Section \ref{subsec:interleaving}), one can check the following:
\begin{enumerate}[label=(\roman*)]
	\item $2\  \dgh\left((X,d_X),(X,d_X^\eps)\right)=\eps$.\label{item:the GH}
	\item $\beta_0^{(X,d_X)}(\delta)=\begin{cases}2,& \delta<[0,1)\\1,& \delta\in [1,+\infty)\end{cases}$ and $\beta_0^{(X,d_X^\eps)}(\delta)=\begin{cases}2,& \delta<[0,1+\eps)\\1,& \delta\in [1+\eps,+\infty)\end{cases}.$ Also, \[\dint\left(\beta_0^{(X,d_X)},\beta_0^{(Y,d_Y)} \right)=\eps.\] \label{item:betti-0}
\end{enumerate}

\begin{enumerate}[resume,label=(\roman*)]
	\item $\dgm_0\left(\ripsss(X,d_X)\right)=\{(0,+\infty),(0,1)\}$, and $\dgm_0\left(\ripsss(X,d_X^\eps)\right)=\{(0,+\infty),(0,1+\eps)\}.$ Also, \[\bott\left(\dgm_0\left(\ripsss(X,d_X)\right), \dgm_0\left(\ripsss(X,d_X^\eps)\right)\right)=\min \left(\eps,\frac{1+\eps}{2}\right).\]\label{item:0-bottleneck}
	\item For $k\geq 1$, both $\dgm_k\left(\ripsss(X,d_X)\right)$ and  $\dgm_k\left(\ripsss(Y,d_Y)\right)$ are the empty set, and thus 
	\[\bott\left(\dgm_k\left(\ripsss(X,d_X)\right), \dgm_k\left(\ripsss(X,d_X^\eps)\right)\right)=0.\]\label{item:k-bottleneck}
\end{enumerate}
Items \ref{item:0-bottleneck} and \ref{item:k-bottleneck} indicate that the best lower bound for $2\  \dgh\left((X,d_X),(X,d_X^\eps)\right)$ obtained by invoking inequality (\ref{eq:bottleneck stability vs GH}) is $\min \left(\eps,\frac{1+\eps}{2}\right)$. On the other hand, from items \ref{item:the GH} and \ref{item:betti-0}, we have \[\eps=2\  \dgh\left((X,d_X),(X,d_X^\eps)\right)=\dint\left(\beta_0^{(X,d_X)},\beta_0^{(Y,d_Y)} \right),\] which is, when $\eps>1$, strictly larger than $\min \left(\eps,\frac{1+\eps}{2}\right)$. This example demonstrates inequality (\ref{eq:betti0 vs GH}) is a complement to the bottleneck stablility of Rips filtration, inequality (\ref{eq:bottleneck stability vs GH}). Also, items \ref{item:the GH} and \ref{item:betti-0} show the tightness of inequality (\ref{eq:betti0 vs GH}). 
\end{example}

\begin{example}\label{ex:demonstration2}
Define two DMSs $\gamma_X$ and $\gamma_X'$ to be the \emph{constant} DMSs which are, for every time $t\in\R$, isometric respectively to the metric spaces $(X,d_X)$ and $(X,d_X^\eps)$ in Example \ref{ex:demonstration}. Then, invoking Remarks \ref{prop:rank k vs GH} and \ref{thm:tighter}, one can compute:  \[\dintsix\left(\rk_0(\gamma_X),\rk_0(\gamma_X') \right)=\dinttwo\left(\rk_0(X,d_X),\rk_0(X,d_X^\eps)\right)=\min\left(\frac{1+\eps}{2},\eps\right),\]
\[ \dintthree\left(\beta_0^{\gamma_X},\beta_0^{\gamma_X'}\right)=\dintone\left(\beta_0^{(X,d_X)},\beta_0^{(X,d_X^\eps)}\right)= \eps.
\]
See below for computational details. When $\eps>1$, this example demonstrates that the RHS of inequality (\ref{eq:inequality}) can be strictly larger.
\end{example}

\begin{proof}[Details about Example \ref{ex:demonstration2}]\label{proof:demonstration2} One can compute $\rk_0(X,d_X),\rk_0(X,d_X^\eps):\R^2\rightarrow (\Z_+\cup\{\infty\})^{\mathrm{op}}$ (Definition \ref{def:the rank invariant of a metric space}) as illustrated in Figure \ref{fig:ranks}.

\begin{figure}
    \centering
    \includegraphics[width=0.7\textwidth]{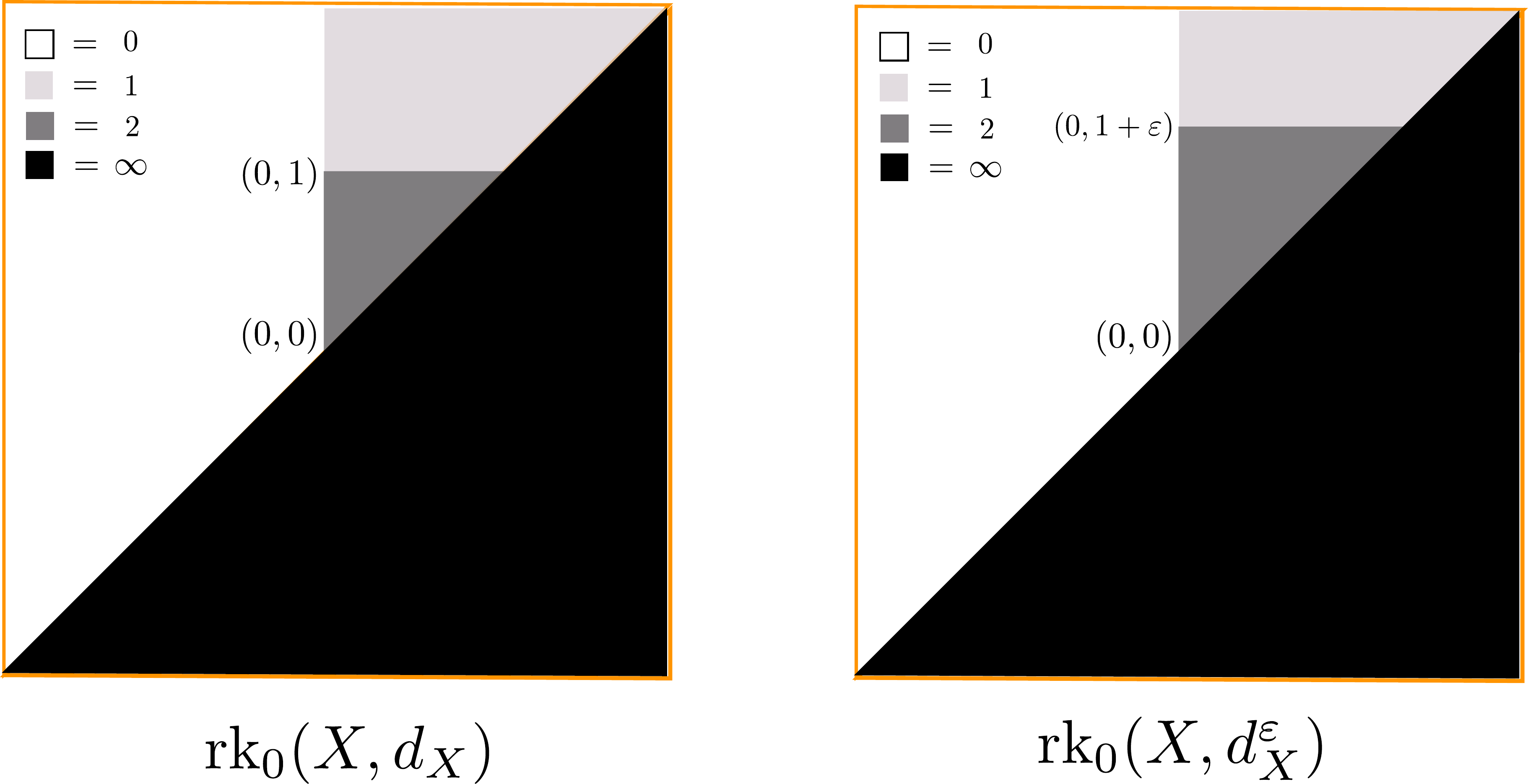}
    \caption{The $0$-th rank invariants of $(X,d_X)$ and $(X,d_X^\eps)$ in Example \ref{ex:demonstration}.}
    \label{fig:ranks}
\end{figure}
From this plot, one can check that
\[\dinttwo\left(\rk_0(X,d_X),\rk_0(X,d_X^\eps)\right)=\begin{cases}\eps,&\eps\leq [0,1]\\ \frac{1+\eps}{2},& \eps\in(1,\infty), \end{cases}\] which amounts to \[\dinttwo\left(\rk_0(X,d_X),\rk_0(X,d_X^\eps)\right)=\min\left(\frac{1+\eps}{2},\eps\right).\]
We already computed $\beta_0^{(X,d_X)}$ and $\beta_0^{(X,d_X^\eps)}$ in Example \ref{ex:demonstration}. 
Observe that the value   
\[\min\left\{\alpha\in[0,\infty):\forall \delta \in [0,\infty),\ \beta_0^{(X,d_X)}(\delta+\alpha)\leq \beta_0^{(X,d_X^{\eps})}(\delta),\ \ \beta_0^{(X,d_X^\eps)}(\delta+\alpha)\leq \beta_0^{(X,d_X^{\eps})}(\delta)\right\}\]
is equal to $\eps$. This implies that $\dintone\left(\beta_0^{(X,d_X)},\beta_0^{(X,d_X^\eps)}\right)= \eps.$
\end{proof}

\section{Computing the interleaving distance between integer-valued functions}\label{sec:algorithm}
In this section we propose an algorithm for computing the interleaving distance between integer-valued functors based on ordinary binary search.

For $n\in \N$, let $[n]:=\{1,\ldots,n\}$. Also, for each $d\in \N$, let $[n]^d\subset \Z^d$ be the subposet of $\Z^d$. Assume that $\ba=(a_1,\ldots,a_d)\in [n]^d$. If there exists $i\in \{1,\ldots,d\}$ such that $a_i=n$, we refer to $\ba$ as an \emph{upper boundary point of $[n]^d$}.  

Let $F:[n]^d\rightarrow \Z_+$ be any function. Then, $F$ can be regarded as a array of non-negative integers.  For each $k\in \{0,\ldots,n-1\}$, the restriction $F|_{[n-k]^d}$ of $F$ is the \emph{lower-left block} of $F$. Symmetrically, we define the \emph{upper-right block} $F|^{[n-k]^d}:[n-k]^d\rightarrow \Z_+$ of $F$ as follows: \[\left(F|^{[n-k]^d}\right)_\ba=F_{\ba+k(1,\ldots,1)}\ \mbox{for}\ \ba\in [n-k]^d.\] In words,  $F|^{[n-k]^d}$ is the restriction of the array $F$ to its upper-right corner of size $(n-k)^d$ with a re-indexing (in the obvious way).

Given $F,G:[n]^d\rightarrow \Z_+$, we write $F\geq G$ if $F_\ba\geq G_\ba$ for all $\ba\in [n]^d$. Let $F,G:[n]^d \rightarrow \Z_+$ be any two order-reversing functions with $0=F_{\ba}=G_{\ba}$ for each upper boundary point $\ba\in[n]^d$. For each $k\in\{0,\ldots,n-1\}$, we define the \textbf{$k$-test} for the pair $(F,G)$:\vspace{4mm}
\begin{algorithm}
\caption{$k$-test for $F,G:[n]^d\rightarrow \Z_+$.}\label{alg:delta test}
\begin{algorithmic}[H]
\If  {$F|_{[n-k]^d}\geq G|^{[n-k]^d}$ \textbf{and} $G|_{[n-k]^d}\geq F|^{[n-k]^d}$} \Return {Yes.} 
\Else \  \Return{No.}
\EndIf
\end{algorithmic}\label{alg:k-test}
\vspace{4mm}
\end{algorithm}

\begin{remark}\label{rem:k-test}Let $F,G:[n]^d \rightarrow \Z_+$ be any two order-reversing functions with $0=F_{\ba}=G_{\ba}$ for each upper boundary point $\ba\in[n]^d$. Fix $k\in\{0,\ldots,n-1\}$. Then,
\begin{enumerate}[label=(\roman*)]
    \item suppose that the $k$-test for $(F,G)$ returns "Yes". Then, for any $k'\in\{k,\ldots,n-1\}$ the $k'$-test for $(F,G)$ returns also "Yes",\label{item:k-test1}
    \item the $(n-1)$-test for $(F,G)$ always returns "Yes". \label{item:k-test2}
\end{enumerate}
\end{remark}

\begin{example} We consider two examples.
\begin{enumerate}[label=(\Alph*)]
    \item $(d=1)$ Consider $F,G:[4]\rightarrow \Z_+$ defined as follows:
\[F:=(F_1,F_2,F_3,F_4)=(5,3,1,0),\hspace{5mm}G:=(G_1,G_2,G_3,G_4)=(4,3,2,0).\]
Since $F\not\geq G$ nor $G\not\geq F$, the $0$-test for $(F,G)$ returns "No". However, since \[F|_{[3]}=(5,3,1)\geq(3,2,0)=G|^{[3]}, \ \mbox{and}\  G|_{[3]}=(4,3,2)\geq(3,1,0)=F|^{[3]},\] the $1$-test for $(F,G)$ returns "Yes". Also, one can check that for any $k\in\{2,3\}$, the $k$-test returns "Yes" (cf. Remark \ref{rem:k-test} \ref{item:k-test1}).
    \item $(d=2)$ Consider $F,G:[3]^2\rightarrow \Z_+$ defined as follows:
    \[F:=\begin{array}{|c|c|c|}
  \hline F_{(1,3)}&F_{(2,3)}&F_{(3,3)}  \\ \hline F_{(1,2)}&F_{(2,2)}&F_{(3,2)} \\ \hline F_{(1,1)}&F_{(1,2)}&F_{(1,3)} \\ \hline 
\end{array}=\begin{array}{|c|c|c|}
\hline  0&0&0  \\ \hline 3&3&0  \\ \hline 4&3&0 \\ \hline
\end{array},\hspace{5mm}G:=\begin{array}{|c|c|c|}
  \hline G_{(1,3)}&G_{(2,3)}&G_{(3,3)}  \\ \hline G_{(1,2)}&G_{(2,2)}&G_{(3,2)} \\ \hline G_{(1,1)}&G_{(1,2)}&G_{(1,3)} \\ \hline 
\end{array}=\begin{array}{|c|c|c|}
\hline  0&0&0  \\ \hline 2&1&0  \\ \hline 2&2&0 \\ \hline
\end{array}.\]
\end{enumerate}
Since $G\not\geq F$, the $0$-test for $(F,G)$ returns "No". Also, since 
\[G|_{[2]^2}=\begin{array}{|c|c|}
 \hline 2&1 \\ \hline 2&2 \\ \hline 
\end{array} \not\geq \begin{array}{|c|c|}
 \hline 0&0 \\ \hline 3&0 \\ \hline \end{array}= F|^{[2]^2},\] the $1$-test returns "No". Since $4\geq 0$ and $2\geq 0$, one can see that the $2$-test returns "Yes".
\end{example}

Recall the poset category $\Zop$: for any $p,q\in \Z_+$, there exists the unique arrow $p\rightarrow q$ if and only if $p\geq q$.  A \emph{function} $\overline{F}:\N^d\rightarrow \Z_+$ can be regarded as a \emph{functor} $\overline{F}:\N^d\rightarrow \Zop$ if and only if $\overline{F}:\N^d\rightarrow \Z_+$ is order-reversing.

By the definition of interleaving distance, we straightforwardly have:

\begin{proposition}\label{prop:computing}For $n,d\in \N$, let $\overline{F},\overline{G}:\N^d \rightarrow \Zop$ be any two functors with $0=F_{\ba}=G_{\ba}$ for each upper boundary point $\ba\in[n]^d$. Consider the restrictions $F:=\overline{F}|_{[n]^d}$ and $G:=\overline{G}|_{[n]^d}$. Then,

\[\dint\left(\overline{F},\overline{G}\right)=\min\left\{k\in\{0,1,\ldots,n-1\}:\ \mbox{the $k$-test for $(F,G)$ returns "Yes"}\right\}.\]
\end{proposition}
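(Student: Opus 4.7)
The plan is to show the two-way inclusion by translating the $k$-test statement directly into the definition of $k$-interleaving and back, using the boundary vanishing hypothesis to handle indices that fall outside the finite grid $[n]^d$. Since $\overline{F},\overline{G}$ take values in $\Z_+$ and are indexed by $\N^d$, the interleaving distance is necessarily a non-negative integer, so $\dint(\overline{F},\overline{G})$ equals the smallest integer $k\geq 0$ for which $\overline{F}$ and $\overline{G}$ are $k$-interleaved, i.e.
\[
\overline{F}_\ba \geq \overline{G}_{\ba+k\vec{1}}\ \text{and}\ \overline{G}_\ba\geq \overline{F}_{\ba+k\vec{1}}\ \text{for all}\ \ba\in \N^d.
\]
By Remark \ref{rem:k-test}\ref{item:k-test2} combined with the forward direction below, the minimum over $\{0,\ldots,n-1\}$ exists and is an upper bound, so $\dint(\overline{F},\overline{G})\leq n-1$; this makes the minimum on the RHS well-defined.

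First I would prove the forward direction: if the $k$-test for $(F,G)$ returns Yes, then the pair $(\overline{F},\overline{G})$ is $k$-interleaved. Take any $\ba\in \N^d$ and consider $\overline{F}_\ba$ versus $\overline{G}_{\ba+k\vec{1}}$. If $\ba\in [n-k]^d$, then $\ba+k\vec{1}\in [n]^d$ with the second coordinate-wise shift being exactly the operation defining $G|^{[n-k]^d}$; thus the inequality $F|_{[n-k]^d}\geq G|^{[n-k]^d}$ gives $\overline{F}_\ba=F_\ba\geq G_{\ba+k\vec{1}}=\overline{G}_{\ba+k\vec{1}}$. If on the other hand $\ba\notin [n-k]^d$, then some coordinate $a_i\geq n-k+1$, so $\ba+k\vec{1}$ has a coordinate $\geq n+1$, which dominates an upper boundary point of $[n]^d$ coordinate-wise. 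Since $\overline{G}$ is order-reversing and vanishes on the upper boundary of $[n]^d$, it vanishes at $\ba+k\vec{1}$, and the inequality holds trivially. The symmetric inequality $\overline{G}_\ba\geq \overline{F}_{\ba+k\vec{1}}$ follows from the other half of the $k$-test.

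Next I would prove the converse: if $(\overline{F},\overline{G})$ is $k$-interleaved, then the $k$-test for $(F,G)$ returns Yes. For any $\ba\in [n-k]^d$, we have $\ba+k\vec{1}\in [n]^d$, so the interleaving condition yields $F_\ba = \overline{F}_\ba\geq \overline{G}_{\ba+k\vec{1}}=G_{\ba+k\vec{1}}=\left(G|^{[n-k]^d}\right)_\ba$, which is exactly $F|_{[n-k]^d}\geq G|^{[n-k]^d}$ pointwise. The symmetric inequality is identical after swapping $F$ and $G$.

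Combining these two directions, $\overline{F}$ and $\overline{G}$ are $k$-interleaved if and only if the $k$-test for $(F,G)$ returns Yes (for $k\in\{0,\ldots,n-1\}$), and by the integer-valued discretization remark above, $\dint(\overline{F},\overline{G})$ equals the minimum such $k$. The only subtlety to handle carefully is the off-grid case in the forward direction, where one must invoke the order-reversing property of $\overline{G}$ together with the upper boundary vanishing to conclude that $\overline{G}_{\ba+k\vec{1}}=0$; this is the one place where the hypothesis on upper boundary points is essential, and I expect it to be the main (though modest) obstacle to making the argument airtight.
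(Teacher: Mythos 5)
Your argument is correct and is exactly the ``straightforward from the definition'' verification that the paper leaves implicit: the $k$-test inequalities coincide with the $k$-interleaving inequalities on the grid, and off-grid indices are disposed of via monotonicity together with the vanishing on upper boundary points, so the distance equals the least $k$ passing the test. The only point worth stating explicitly (which you correctly flag) is that for $\N^d$-indexed functors only integer shifts are meaningful, so the infimum in the definition of $\dint$ is attained at an integer.
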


\paragraph{Computational complexity of computing the rank invariant.} Let $\vect$ be the category of vector spaces over a fixed field $\F$ with linear maps. Let $M:[n]^d\rightarrow \vect$ be a (finite) multidimensional module. Let $\mathrm{total}(M):=\sum_{\ba\in [n]^d} \dim(M_\ba)$. In order to compute the rank invariant $\rk(M):[n]^d\rightarrow \Z_+$, one needs $O(\mathrm{total}(M)^\omega)$ operations \cite[Appendix C]{bjerkevik2017computational}, where $\omega$ is the matrix multiplication exponent.

\paragraph{Proposed algorithm for computing $\dint$ and its computational complexity.}
Let $F,G:[n]^d \rightarrow \Z_+$ be any two order-reversing functions.  Based on Proposition \ref{prop:computing}, in order to find the mimimal $k\in\{0,\ldots,n-1\}$ for which the $k$-test for $(F,G)$ (Algorithm \ref{alg:k-test}) returns "Yes", we carry out \textbf{binary search}. 

Let us fix $k\in\{0,\ldots,n-1\}$. For carrying out the $k$-test for $(F,G)$, we compare pairs of integers from the arrays of $F$ and $G$. \emph{Assume that pairs of integers are compared one by one.} Then, notice that, depending on $F$ and $G$, the number of comparisons which are necessary to complete the $k$-test can vary from $1$ to $2(n-k)^d$. Under the assumption that the number of required comparisons is a random variable uniformly distributed in  $\{1,\ldots, 2(n-d)^k\}$ one can conclude that $\frac{1+2(n-d)^k}{2}\approx (n-d)^k$ comparisons are needed on average. Under the preceeding assumptions, by  results from \cite[Section 4]{knight1988search}, we directly have:

\begin{theorem}\label{thm:cost2}
The expected cost of computing $\dintd(F,G)$ is at least $O(n^d\log n)$. Furthermore, the algorithm based on ordinary binary search has this expected cost.
\end{theorem}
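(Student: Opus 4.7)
The plan is to leverage Proposition~\ref{prop:computing} together with the monotonicity recorded in Remark~\ref{rem:k-test}\ref{item:k-test1} in order to reduce the computation of $\dintd(F,G)$ to a monotone one-dimensional search problem: find the smallest $k^\ast \in \{0,\ldots,n-1\}$ for which the $k$-test on $(F,G)$ returns ``Yes''. By Remark~\ref{rem:k-test}\ref{item:k-test2} such a $k^\ast$ always exists, and the monotonicity in $k$ legitimizes applying ordinary binary search, which performs at most $\lceil \log_2 n\rceil$ calls to the $k$-test.

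Next I would bound the expected cost of a single $k$-test. The $k$-test inspects entries of $F|_{[n-k]^d}$ against $G|^{[n-k]^d}$ (and the symmetric pair), so it requires at most $2(n-k)^d$ pairwise integer comparisons in total. Under the standing uniformity assumption on the number of comparisons actually needed to reach a verdict, the expected cost of a single $k$-test is $\tfrac{1 + 2(n-k)^d}{2} \approx (n-k)^d$. Summing over the $O(\log n)$ binary-search iterations, and observing that in the worst-case trajectory the tests involve blocks of size $\Theta(n)$ (of which there are at most $O(\log n)$), gives the upper bound $O(n^d \log n)$ asserted in the second sentence. In the remaining trajectory types, the per-test cost decays geometrically in the iteration number and contributes only a lower-order term.

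For the matching lower bound I would appeal to the classical search-theoretic argument in~\cite[Section~4]{knight1988search}: on average, any comparison-based procedure solving a monotone search on $\{0,\ldots,n-1\}$ with each probe costing $\Theta(n^d)$ operations must use $\Omega(\log n)$ probes and hence incur expected cost $\Omega(n^d \log n)$. Applying this to our situation, where each $k$-test requires processing block-pairs of size $\Theta((n-k)^d)$, gives the ``at least'' bound; combined with the previous paragraph, the binary-search algorithm matches the lower bound.

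The main obstacle I anticipate is the precise probabilistic bookkeeping of the per-iteration costs. Concretely, one must track how the distribution of visited $k$-values interacts with the decreasing per-test cost $(n-k)^d$, so that the dominant term in the expected total is truly $n^d \log n$ rather than either $n^d$ (which would be an under-estimate) or $n^d \cdot n = n^{d+1}$ (which would be a naive over-estimate). Making the coupling explicit and checking that the reference to~\cite{knight1988search} applies in this exact setting is the delicate step; everything else is a direct unwinding of Proposition~\ref{prop:computing} and Algorithm~\ref{alg:k-test}.
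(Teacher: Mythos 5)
Your proposal follows the paper's own route essentially verbatim: reduce via Proposition~\ref{prop:computing} and the monotonicity in Remark~\ref{rem:k-test} to finding the least $k$ passing the $k$-test, estimate the expected cost of one $k$-test as roughly $(n-k)^d$ comparisons under the stated uniformity assumption, run ordinary binary search with $O(\log n)$ probes, and invoke \cite[Section 4]{knight1988search} for the expected-cost lower bound and its attainment. The paper in fact gives even less detail than you do -- it simply cites Knight's results at this point -- so your extra bookkeeping of per-probe costs is a harmless elaboration of the same argument.
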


By Theorem \ref{thm:cost2}, the expected costs of computing the LHSs of inequalities in Theorems \ref{thm:rank k stability} and \ref{thm:betti-0 stability}, and Remarks \ref{prop:rank k vs GH} and \ref{thm:tighter} are $O(n^d\log n)$ where $d=6,3,2$ and $1$, respectively in order. 

In Section \ref{sec:other metrics} we compare $\dintd$ with the \emph{matching distance} \cite{cerri2013betti,cerri2011new,landi2018rank}, and with the \emph{dimension distance} \cite[Section 4]{dey2018computing}.

\section{Details about stability theorems}\label{sec:proof of main2}
The goal of this section is to prove all theorems in Section \ref{sec:overview} whose proof was not given therein.

\subsection{Interleaving stability of rank invariants and dimension functions}\label{sec:details about the interleaving is bounded by another interleaving}

\paragraph{The rank invariant and its stability.} For any persistence module $M:\R^d\rightarrow \vect$, the rank invariant of $M$ is defined as follows  \cite{carlsson2009theory}:

\begin{definition}[The rank invariant]\label{def:rank invariant} For any $M:\R^d\rightarrow \vect$, the map $\rk(M):\R^{2d}\rightarrow \Z_+\cup\{\infty\}$ defined as
\[\rk(M)(\ba,\bb):=\begin{cases}\rk(\varphi_M(\ba,\bb)),&\ba\leq\bb \ \in \R^d \\ \infty,&\mbox{otherwise.} \end{cases}\]
is called the \emph{rank invariant} of $M$.
\end{definition}
Given any $M:\R^d\rightarrow \vect$, note that for any $\ba'\leq \ba\leq \bb\leq \bb'$ in $\R^d$, 
\[\varphi_M(\ba',\bb')=\varphi_M(\bb,\bb')\circ\varphi_M(\ba,\bb)\circ \varphi_M(\ba',\ba).\]
Hence, we have that $\rk(M)(\ba',\bb')\leq \rk(M)(\ba,\bb)$. This means that $\rk(M)$ is a \emph{functor} between its domain and codomain when regarded 
\begin{enumerate}[label=(\roman*)]
    \item the domain $\R^{2d}$ as the product poset $(\R^d)^{\mathrm{op}}\times\R^d$ and, 
    \item the codomain $\Z_+\cup\{\infty\}$ as the poset $(\Z_+\cup\{\infty\})^{\mathrm{op}}$. 
\end{enumerate}

 We have \emph{stability} of the rank invariant:

\begin{theorem}[Stability of the rank invariant {\cite[Theorem 8.2]{patel2018generalized},\cite[Theorem 22]{puuska2017erosion}}]\label{prop:stability of the rank invariant} For any $M,N:\R^d\rightarrow\vect$,
\begin{equation}\label{eq:stability of the rank invariant}
    \dinttwod(\rk(M),\rk(N))\leq \dint^{\vect}(M,N).
\end{equation}
\end{theorem}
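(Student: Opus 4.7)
My plan is to strengthen the stated inequality to the pointwise fact: if $M$ and $N$ are $\vec{\eps}$-interleaved as $\R^d$-indexed persistence modules, then $\rk(M)$ and $\rk(N)$ are $\vec{\eps}$-interleaved as $(\Z_+\cup\{\infty\})^{\mathrm{op}}$-valued functors on the poset $(\R^d)^{\mathrm{op}}\times\R^d$. Passing to the infimum over all $\eps\geq\dint^{\vect}(M,N)$ then gives (\ref{eq:stability of the rank invariant}); the case $\dint^{\vect}(M,N)=\infty$ is trivial.

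Fix an $\eps$-interleaving pair $(f,g)$ with $f:M\Rightarrow N(\vec{\eps})$ and $g:N\Rightarrow M(\vec{\eps})$, so that $g(\vec{\eps})\circ f=\varphi_M^{2\vec{\eps}}$ and $f(\vec{\eps})\circ g=\varphi_N^{2\vec{\eps}}$. The core step is a factorization: for every $\ba\leq\bb$ in $\R^d$ I will show
\[
\varphi_N(\ba-\vec{\eps},\bb+\vec{\eps}) \;=\; f_{\bb}\,\circ\,\varphi_M(\ba,\bb)\,\circ\,g_{\ba-\vec{\eps}}.
\]
This is obtained by first decomposing $\varphi_N(\ba-\vec{\eps},\bb+\vec{\eps})=\varphi_N(\bb-\vec{\eps},\bb+\vec{\eps})\circ\varphi_N(\ba-\vec{\eps},\bb-\vec{\eps})$, then substituting $\varphi_N(\bb-\vec{\eps},\bb+\vec{\eps})=f_{\bb}\circ g_{\bb-\vec{\eps}}$ via the interleaving identity for $N$, and finally invoking naturality of $g$ on the relation $\ba-\vec{\eps}\leq\bb-\vec{\eps}$ to rewrite $g_{\bb-\vec{\eps}}\circ\varphi_N(\ba-\vec{\eps},\bb-\vec{\eps})=\varphi_M(\ba,\bb)\circ g_{\ba-\vec{\eps}}$. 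Since the rank of a composition does not exceed the rank of any factor, this immediately yields $\rk(N)(\ba-\vec{\eps},\bb+\vec{\eps})\leq\rk(M)(\ba,\bb)$, and the entirely symmetric argument (swapping $M\leftrightarrow N$ and $f\leftrightarrow g$) gives $\rk(M)(\ba-\vec{\eps},\bb+\vec{\eps})\leq\rk(N)(\ba,\bb)$. At points $(\ba,\bb)\in\R^{2d}$ with $\ba\not\leq\bb$, the right-hand sides are $\infty$ by definition and the inequalities hold vacuously.

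To convert this to an interleaving statement, I use that on $(\R^d)^{\mathrm{op}}\times\R^d$ the positive $\vec{\eps}$-shift of $(\ba,\bb)$ is $(\ba-\vec{\eps},\bb+\vec{\eps})$, and that a morphism $p\to q$ in $(\Z_+\cup\{\infty\})^{\mathrm{op}}$ exists precisely when $p\geq q$. The two families of inequalities above therefore define natural transformations $\rk(M)\Rightarrow\rk(N)(\vec{\eps})$ and $\rk(N)\Rightarrow\rk(M)(\vec{\eps})$; the compatibility axioms of Definition \ref{def:interleaving_general} hold automatically because $(\Z_+\cup\{\infty\})^{\mathrm{op}}$ is a thin category. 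Thus $\rk(M)$ and $\rk(N)$ are $\vec{\eps}$-interleaved, giving $\dinttwod(\rk(M),\rk(N))\leq\eps$.

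I expect the main obstacle to be bookkeeping rather than content: the combination of the opposite order on the first factor of the domain with the opposite order on the target $\Z_+\cup\{\infty\}$ makes it easy to flip the direction of an inequality when translating the rank-subadditivity estimate into an interleaving. Once the convention that a positive shift in an $\R^{\mathrm{op}}$-factor corresponds to subtraction in the underlying $\R$ is pinned down, the remainder is a short diagram chase.
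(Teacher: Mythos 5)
Your proposal is correct and follows essentially the same route as the paper: both hinge on the factorization $\varphi_N(\ba-\vec{\eps},\bb+\vec{\eps})=f_{\bb}\circ\varphi_M(\ba,\bb)\circ g_{\ba-\vec{\eps}}$ (which the paper states without the naturality-chase you spell out), deduce the pointwise rank inequalities from rank subadditivity, handle $\ba\not\leq\bb$ via the $\infty$ convention, and conclude by symmetry and taking the infimum over $\eps$. The extra detail you give in deriving the factorization and in checking the thin-category interleaving axioms is a faithful expansion of the paper's argument, not a different one.
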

Note that Theorem \ref{prop:stability of the rank invariant} together with Theorem \ref{thm:main2} result in Theorem \ref{thm:rank k stability}. Even though the proof of Theorem \ref{prop:stability of the rank invariant} is given in  {\cite[Theorem 8.2]{patel2018generalized},\cite[Theorem 22]{puuska2017erosion}} in more general setting, we provide a brief version of the proof here. 

\begin{proof}
Since we regard $\rk(M)$ as a functor from $(\R^d,\geq)\times (\R^d,\leq)$ to $(\Z_+\cup\{\infty\})^\mathrm{op}$, for any $\eps\in [0,\infty)$, the $\eps$-shift $\rk(M)(\eps):(\R^d,\geq)\times (\R^d,\leq)\rightarrow(\Z_+\cup\{\infty\})^\mathrm{op}$ of $\rk(M)$ is defined as \[\rk(M)(\eps)_{(\ba,\bb)}=\rk(M)_{(\ba-\vec{\eps},\bb+\vec{\eps})}.\] Similarly, the $\eps$-shift of $\rk(N)$ is defined. 

Suppose that for some $\eps\in [0,\infty)$, the pair  $(f,g)$ is an $\eps$-interleaving pair for $M,N:\R^d\rightarrow \vect$ (Definition \ref{def:interleaving_general}). We show $\rk(N)(\eps)\leq\rk(M)$. Pick any $(\ba,\bb)\in \R^d \times \R^d$. If $\ba\not\leq \bb$ in $\R^d$, then $\rk(M)(\ba,\bb)=\infty$, and thus we trivially have $\rk(N)_{(\ba-\vec{\eps},\bb+\vec{\eps})}\leq \rk(M)_{(\ba,\bb)}$.  If $\ba\leq \bb$ in $\R^d$, then $\ba-\vec{\eps}\leq \bb+\vec{\eps}$, and since 
\[\varphi_N(\ba-\vec{\eps},\bb+\vec{\eps})=f_{\bb}\circ\varphi_M(\ba,\bb)\circ g_{\ba},\]
we have $\rk(N)_{(\ba-\vec{\eps},\bb+\vec{\eps})}\leq \rk(M)_{(\ba,\bb)}$. By symmetry, we also have $\rk(M)(\eps)\leq \rk(N)$, completing the proof.
\end{proof}

\begin{remark}\label{rem:erosion} In order to compare the rank invariants, the author of \cite{puuska2017erosion} makes use of a generalization of the \emph{erosion distance} in \cite{patel2018generalized}, which is denoted by $\dero$ (see Section \ref{sec:other metrics}). It can be deduced that for the LHS of inequality (\ref{eq:stability of the rank invariant}) coincides with  $\dero(\rk(M),\rk(N))$.
\end{remark}

Given $\delta>0$,  deciding whether $\dint^{\vect}(M,N)\leq \delta$  is in general \emph{NP-hard} \cite{bjerkevik2018computing,bjerkevik2017computational}.

In Theorem \ref{prop:stability of the rank invariant}, substituting the comparison of $M$ and $N$ with that of $\rk(M)$ and $\rk(N)$ results in doubling of the underlying dimension of the interleaving distance. This increase of dimension is a price one must pay for substituting the target category $\vect$ with the poset category $(\Z_+\cup\{\infty\})^\mathrm{op}$. Despite the increase in the underlying dimension, as we show in Section \ref{sec:algorithm}, it turns out that computing $\dint$ is easier than computing $\dint^\vect$. 

For any \emph{interval decomposable} modules $M,N:\R^d\rightarrow\vect$, let $\B(M)$ and $\B(N)$ be the barcode of $M$ and $N$, respectively. Then, by \cite[Proposition 2.13]{botnan2018algebraic},
\[\dint^\vect(M,N)\leq \bott\left (\B(M),\B(N)\right).\] 

Hence, together with Theorem \ref{prop:stability of the rank invariant} ,we straightforwardly have:

\begin{corollary}\label{cor:rank vs barcode} For any interval decomposable $M,N:\R^d\rightarrow\vect$, let $\B(M)$ and $\B(N)$ be the barcode of $M$ and $N$, respectively. Then,
\[\dinttwod(\rk(M),\rk(N))\leq \bott\left (\B(M),\B(N)\right).\]
\end{corollary}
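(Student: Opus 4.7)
The plan is to chain together the two stability inequalities that have just been assembled in the surrounding text. First I would invoke Theorem \ref{prop:stability of the rank invariant}, which gives
\[
\dinttwod\bigl(\rk(M),\rk(N)\bigr) \leq \dint^{\vect}(M,N)
\]
for arbitrary $\R^d$-indexed persistence modules $M$ and $N$ of vector spaces. This step does not use interval decomposability; it is the general rank-invariant stability obtained by transporting an $\eps$-interleaving pair $(f,g)$ between $M$ and $N$ into the factorization $\varphi_N(\ba-\vec{\eps},\bb+\vec{\eps}) = f_\bb \circ \varphi_M(\ba,\bb)\circ g_\ba$ (and symmetrically), yielding the required pointwise comparison of rank functions after an $\eps$-shift.

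Next I would use interval decomposability to pass from the interleaving distance to the bottleneck distance on barcodes. This is precisely the content of \cite[Proposition 2.13]{botnan2018algebraic}, cited immediately before the statement, which says that whenever $M$ and $N$ admit barcode decompositions,
\[
\dint^{\vect}(M,N) \leq \bott\bigl(\B(M),\B(N)\bigr).
\]
Composing the two inequalities gives
\[
\dinttwod\bigl(\rk(M),\rk(N)\bigr) \leq \dint^{\vect}(M,N) \leq \bott\bigl(\B(M),\B(N)\bigr),
\]
which is the desired bound. Since both ingredients are cited results, there is no genuine obstacle; the only thing to verify is that they may be chained without a hidden hypothesis mismatch, and indeed the interval decomposability assumption in the corollary is what unlocks the second inequality while the first needs nothing beyond what the corollary already assumes. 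Thus the proof reduces to a one-line concatenation of the two cited estimates.
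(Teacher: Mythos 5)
Your proposal is correct and matches the paper's argument exactly: the corollary is stated there as an immediate consequence of chaining Theorem \ref{prop:stability of the rank invariant} with the algebraic stability bound $\dint^\vect(M,N)\leq \bott\left(\B(M),\B(N)\right)$ from \cite[Proposition 2.13]{botnan2018algebraic}, with interval decomposability needed only for the second inequality. There is nothing to add.
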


\paragraph{Monotonicity and stability of dimension functions for surjective modules.}

\begin{definition}[Surjective persistence modules]\label{def:surjective persistence}Let $\C$ be either $\sets$ or $\vect$ and let $M:\R^d\rightarrow \C$ be any persistence module. We call $M$ \emph{surjective} if $\varphi_M(\ba,\bb):M_{\ba}\rightarrow M_{\bb}$ is surjective for all $\ba\leq \bb$ in $\R^d.$
    
\end{definition}

\begin{example}[The $0$-th homology of the Rips filtration]\label{ex:0th homology} Let $(X,d_X)$ be a metric space. By applying the 0-th (simplicial) homology functor to the \emph{Rips filtration} of $(X,d_X)$, we obtain surjective persistence module $\R\rightarrow \vect$. 
\end{example}

\begin{definition}[Dimension function]\label{def:dimension function} Let $\C$ be either $\sets$ or $\vect$ and let $M:\R^d\rightarrow \C$ be any persistence module. The \emph{dimension function} $\dm(M):\R^d\rightarrow \Z_+$ of $M$ is defined by sending each $\ba\in \R^d$ to the cardinality of $M_\ba$ (when $\C=\sets$) or the dimension of the vector spaces $M_\ba$ (when $\C=\vect$). 
\end{definition}

\begin{remark}\label{rem:decreasing function is a fuction}In Definition \ref{def:dimension function}, if $M$ is a  surjective persistence module, then we can regard $\dm(M)$ as a functor $\R^d\rightarrow \Zop$.
\end{remark}

\begin{proposition}[Interleaving stability of the dimension function]\label{prop:dimension stability via interleaving}Let $\C$ be either $\sets$ or $\vect$ and let $M,N:\R^d\rightarrow \C$ be any two \emph{surjective} persistence modules. Then,

\begin{enumerate}[label=(\roman*)]
  \item $\dintd\left(\dm(M),\dm(N)\right)\leq 2\cdot \dintd^{\C}(M,N).$\label{item:dimension stability1}
\item $\dintd\left(\dm(M),\dm(N)\right)\leq \bdintd^{\C}(M,N),$ \label{item:dimension stability2}
\end{enumerate}
\end{proposition}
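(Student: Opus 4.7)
My plan is to prove part (ii) first by directly reading off the required interleaving of dimension functions from a surjective interleaving of modules, and then to derive part (i) from (ii) via the observation that, when $M$ and $N$ are both surjective, any $\vec{\eps}$-interleaving is automatically a surjective $\vec{\eps}$-interleaving. This second observation, in fact, yields the sharper bound $\dintd(\dm(M),\dm(N)) \leq \dintd^\C(M,N)$, from which (i) follows a fortiori.

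\textbf{Part (ii).} Fix $\eps > \bdintd^\C(M,N)$ and choose a surjective $\vec{\eps}$-interleaving pair $(f,g)$ between $M$ and $N$. For each $\ba\in\R^d$ the maps $f_\ba: M_\ba\to N_{\ba+\vec{\eps}}$ and $g_\ba: N_\ba\to M_{\ba+\vec{\eps}}$ are surjective. Since a surjection cannot strictly increase cardinality (in $\sets$) or dimension (in $\vect$), we obtain
\[\dm(M)_\ba \geq \dm(N)_{\ba+\vec{\eps}} \quad \text{and} \quad \dm(N)_\ba \geq \dm(M)_{\ba+\vec{\eps}}.\]
Viewing $\dm(M)$ and $\dm(N)$ as functors $\R^d\to\Zop$ via Remark \ref{rem:decreasing function is a fuction}, these two families of inequalities are exactly the data of a $\vec{\eps}$-interleaving pair in $\Zop$ (where natural transformations are just pointwise comparisons). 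Infimizing over admissible $\eps$ concludes (ii).

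\textbf{Part (i).} My plan here is to show that when $M$ and $N$ are both surjective, one has $\bdintd^\C(M,N) \leq \dintd^\C(M,N)$, so that (ii) immediately implies $\dintd(\dm(M),\dm(N)) \leq \dintd^\C(M,N) \leq 2\cdot\dintd^\C(M,N)$. The crux is the interleaving identity
\[g_{\ba+\vec{\eps}} \circ f_\ba = \varphi_M(\ba,\ba+2\vec{\eps}).\]
Because $M$ is surjective, the right-hand side is a surjection onto $M_{\ba+2\vec{\eps}}$; since the image of the composition is contained in the image of $g_{\ba+\vec{\eps}}$, this forces $g_{\ba+\vec{\eps}}$ itself to be surjective. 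As $\ba$ ranges over $\R^d$, the shifted point $\ba+\vec{\eps}$ also ranges over all of $\R^d$ by translation invariance, so every component of $g$ is surjective. The symmetric argument using surjectivity of $N$ shows every component of $f$ is surjective, hence $(f,g)$ is in fact a surjective $\vec{\eps}$-interleaving.

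\textbf{Main obstacle.} No step presents a genuine difficulty; the argument is essentially an unwinding of definitions. The single delicate point is the quantifier manipulation in part (i) — one must verify that the surjectivity conclusion for $g_{\ba+\vec{\eps}}$, obtained for each $\ba\in\R^d$, transfers to surjectivity of $g_\bb$ for every $\bb\in\R^d$. This relies on the translation invariance of the parameter poset $\R^d$, which is innocuous here but would need to be revisited in a setting where the parameter poset is only an upper set of $\R^d$ (cf.\ Remark \ref{rem:free subposet}\ref{item:upper set}).
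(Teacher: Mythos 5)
Your proof is correct, and part (i) takes a genuinely different (and in the $\R^d$-indexed setting, stronger) route than the paper's. Part (ii) coincides with the paper's argument: surjectivity of $f_\ba$ and $g_\ba$ immediately gives $\dm(M)_\ba\geq\dm(N)_{\ba+\vec{\eps}}$ and $\dm(N)_\ba\geq\dm(M)_{\ba+\vec{\eps}}$. For part (i), the paper also starts from the identity $g_{\ba+\vec{\eps}}\circ f_\ba=\varphi_M(\ba,\ba+2\vec{\eps})$ to conclude that the interleaving components at shifted indices are surjective, but it then \emph{composes} such a component with the surjective structure map $\varphi(\ba,\ba+\vec{\eps})$ to produce a surjection $M_\ba\twoheadrightarrow N_{\ba+2\vec{\eps}}$, yielding the factor-$2$ bound directly without ever upgrading $(f,g)$ to a full interleaving. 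You instead use translation invariance of $\R^d$ to conclude that \emph{every} component of $f$ and $g$ is surjective, so that $\bdintd^{\C}(M,N)\leq\dintd^{\C}(M,N)$ for surjective modules, and (i) follows from (ii) with the sharper constant $1$ in place of $2$. What your route buys is this tighter inequality (and the equality $\bdintd^{\C}=\dintd^{\C}$ on surjective $\R^d$-indexed modules); what the paper's route buys is robustness of the factor-$2$ statement when the indexing poset is only an upper set such as $\U\times\R_+$ (cf.\ Remark \ref{rem:free subposet}\ref{item:upper set}), where points near the boundary cannot be written as $\ba+\vec{\eps}$ and your translation argument breaks down --- this is precisely the setting in which the proposition is invoked for Theorem \ref{thm:betti-0 stability}, and it is why the paper keeps the factor $2$ in (i) and separately supplies the full interleaving via Theorem \ref{thm:stability of spatiotemporal dendrogram} before applying (ii). You correctly flag this caveat yourself, so there is no gap relative to the statement as written over $\R^d$.
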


\begin{proof} Let us assume that $\C=\sets$. The proof for the case $\C=\vect$ is similar. We show \ref{item:dimension stability1}. Suppose that $(f,g)$ is an $\eps$-interleaving pair between $M$ and $N$. Pick any $\ba\in \R^d$. 
We have $\varphi_N(\ba,\ba+2\vec{\eps})=g_{\ba+\vec{\eps}}\circ f_{\ba}$. Since $\varphi_N(\ba,\ba+2\vec{\eps})$ is surjective, we also have that $g_{\ba+\vec{\eps}}$ is surjective. Since $\varphi_M(\ba,\ba+\eps)$ is also surjective, the composition $g_{\ba+\vec{\eps}}\circ\varphi_M(\ba,\ba+\vec{\eps}):M_\ba\rightarrow N_{\ba+\vec{\eps}}$ is surjective. This implies that $\dm(M)_{\ba}\geq \dm(N)_{\ba+2\vec{\eps}}$. By symmetry, we also have that $\dm(N)_{\ba}\geq \dm(M)_{\ba+2\vec{\eps}}$ for each $\ba\in \R^d$. Therefore, $\dint\left(\dm(M),\dm(N)\right)\leq 2\eps$, as desired.

We prove Item \ref{item:dimension stability2}. Suppose that there exists a \emph{full} $\eps$-interleaving pair between $M$ and $N$. Then, this directly implies that for all $\ba \in \R^d$, $\dm(M)_{\ba}\geq \dm(N)_{\ba+\vec{\eps}}$ and $\dm(N)_{\ba}\geq \dm(M)_{\ba+\vec{\eps}}$.
\end{proof}

\begin{proposition}\label{prop:dim vs rk}Let $\C$ be either $\sets$ or $\vect$ and let $M,N:\R^d\rightarrow \C$ be any two \emph{surjective} persistence modules. Then, 
\[\dinttwod\left(\rk(M),\rk(N)\right)\leq \dintd\left(\dm(M),\dm(N)\right).\]
\end{proposition}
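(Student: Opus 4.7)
The plan is to reduce the comparison of rank invariants to the comparison of dimension functions by exploiting the following elementary observation: if $\varphi_M(\ba,\bb):M_\ba\to M_\bb$ is surjective, then its rank (equivalently, its image's cardinality or dimension) coincides with the cardinality/dimension of its codomain. Consequently, for every $\ba\le\bb$ in $\R^d$,
\[
\rk(M)_{(\ba,\bb)} \;=\; \dm(M)_{\bb},
\]
and symmetrically $\rk(N)_{(\ba,\bb)}=\dm(N)_\bb$ whenever $\ba\le\bb$. Thus the rank invariant of a surjective module depends only on its ``upper'' argument.

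Next, suppose $\dm(M)$ and $\dm(N)$ are $\eps$-interleaved as functors $\R^d\to \Zop$. By Definition \ref{def:interleaving_general}, this means that for every $\ba\in\R^d$,
\[
\dm(M)_{\ba}\;\ge\;\dm(N)_{\ba+\vec{\eps}} \quad\text{and}\quad \dm(N)_{\ba}\;\ge\;\dm(M)_{\ba+\vec{\eps}}.
\]
Recall that the domain of $\rk(M)$ is the product poset $(\R^d,\ge)\times(\R^d,\le)$, so the shift by $\vec{\eps}$ in this poset takes $(\ba,\bb)$ to $(\ba-\vec{\eps},\bb+\vec{\eps})$. I will show that $\rk(M)$ and $\rk(N)$ are $\eps$-interleaved, i.e.\ for all $(\ba,\bb)\in\R^{2d}$,
\[
\rk(M)_{(\ba,\bb)}\;\ge\;\rk(N)_{(\ba-\vec{\eps},\,\bb+\vec{\eps})} \quad\text{and (by symmetry) vice versa.}
\]

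The argument is a case split on whether $\ba\le\bb$. When $\ba\not\le\bb$, the left-hand side equals $\infty$ by Definition \ref{def:rank invariant}, so the inequality holds trivially. When $\ba\le\bb$, we also have $\ba-\vec{\eps}\le\bb+\vec{\eps}$, so both rank-invariant values are in the ``finite'' regime; applying the key observation above, the desired inequality becomes
\[
\dm(M)_{\bb}\;\ge\;\dm(N)_{\bb+\vec{\eps}},
\]
which is precisely one half of the $\eps$-interleaving of $\dm(M)$ and $\dm(N)$. The reverse inequality follows identically. Hence any $\eps$ for which $\dm(M)$ and $\dm(N)$ are $\eps$-interleaved is also one for which $\rk(M)$ and $\rk(N)$ are $\eps$-interleaved, yielding the stated inequality.

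There is no real obstacle here; the only care needed is bookkeeping the shift direction in the mixed-variance product poset $(\R^d,\ge)\times(\R^d,\le)$ and verifying the trivial case where $\ba\not\le\bb$. The result can be viewed as saying that for surjective modules, passing from $\rk$ to $\dm$ loses no information about $\eps$-interleavings (which is consistent with the subsequent Proposition \ref{prop:better-dynamic} used to compare the Betti-0 function with the $0$-th rank invariant of DMSs).
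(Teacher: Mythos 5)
Your proof is correct and follows essentially the same route as the paper: use surjectivity to identify $\rk(M)_{(\ba,\bb)}$ with $\dm(M)_\bb$ when $\ba\leq\bb$, note the non-comparable case is trivial because the rank invariant is $\infty$ there, and then transfer the $\eps$-interleaving of the dimension functions directly to the rank invariants. The only cosmetic difference is that the paper phrases the argument with $\eps'>\eps$ to handle the infimum, which your "any $\eps$ admitting an interleaving of $\dm$ also admits one of $\rk$" formulation handles equivalently.
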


\begin{proof} 
    Suppose that for some $\eps\in [0,\infty)$, $\dint\left(\dm(M),\dm(N)\right)<\eps$.  It suffices to prove that for all $\ba,\bb\in \R^d$ with $\ba\leq \bb$, and for all $\eps'>\eps$ in $[0,\infty)$,    
    \[\rk(N)(\ba-\vec{\eps'},\bb+\vec{\eps'})\leq \rk(M)(\ba,\bb).\]
Invoking that $M$ and $N$ are surjective, notice that $\rk(N)(\ba-\vec{\eps'},\bb+\vec{\eps'})= \dm(N)(\bb+\vec{\eps'})$ and $\rk(M)(\ba,\bb)=\dm(M)(\bb).$ By assumption, we readily have that $\dm(N)(\bb+\vec{\eps'})\leq\dm(M)(\bb)$, completing the proof.
\end{proof}

Proposition \ref{prop:better-dynamic} is a corollary of Proposition \ref{prop:dim vs rk}.

\subsection{Proof of Theorem \ref{thm:main2}}\label{sec:details about PH}

Before showing Theorem \ref{thm:main2}, we begin with the remarks below.

\begin{remark}[Simplicial maps between Rips complexes]\label{rem:rips}For any (semi-)metric spaces\footnote{ We call $(X,d_X)$ a \emph{semi-metric} space if the function $d_X:X\times X\rightarrow \R_+$ satisfies: (1) for all $x\in X$, $d_X(x,x)=0$, and (2) for all $x,x'\in X$, $d_X(x,x')=d_X(x',x)$.} $(X,d_X)$ and $(Y,d_Y)$, and for some $\delta,\delta'\geq 0$, consider the Rips complexes $K=\rips(X,d_X)$ and $L=\mathcal{R}_{\delta'}(Y,d_Y)$ . By the definition of Rips complex, in order to claim that any map $f:X\rightarrow Y$ is a simplicial map, it suffices to show that whenever $x,x'\in X$ with $d_X(x,x')\leq \delta$, it holds that $d_Y(f(x),f(x'))\leq \delta'.$
\end{remark}
For $\bu=[u,u']\in\U$ and $\eps\in[0,\infty)$, let $\bu^\eps:=[u-\eps,u'+\eps]$.
\begin{remark}\label{rem:inequality}  
Let $\gammax$ and $\gammay$ be any two DMSs and let $\tripod$ be a $\eps$-tripod between $\gamma_X$ and $\gamma_Y$. Then it is not difficult to check that for any closed interval $I$ of $\R$, 
\begin{equation}\label{eq:tripod-general}
    \bigvee_{I^\eps}d_X\leq_R \bigvee_I d_Y+2\eps\ \ \mbox{and}\ \ \bigvee_{I^\eps}d_Y\leq_R \bigvee_I d_X+2\eps,
\end{equation}
	which is slightly more general than the condition in (\ref{eq:distor}).	
\end{remark}

\begin{proof}[Proof of Theorem \ref{thm:main2}]\label{proof:main2} 
If $\dintl(\gamma_X,\gamma_Y)=\infty$, there is nothing to prove. Suppose that  $\dintl(\gamma_X,\gamma_Y)<\eps$ for some $\eps \in (0,\infty)$.
Let $\mathcal{S}:=\ripss(\gamma_X)$ and $\mathcal{T}:=\ripss(\gamma_Y)$ (Definition \ref{def:spatiotemporal Rips}). We regard $\U\times\R_+$ as the subposet of $\Rop\times \R\times \R$ (Figure \ref{fig:3d filtration}).  Let $\bv:=\eps(-1,1,2)\in \R^3$. It suffices to show that there are natural transformations $\Phi:\mathcal{S}\Rightarrow \mathcal{T}(\bv)$ and $\Psi:\mathcal{T}\Rightarrow \mathcal{S}(\bv)$ (between the two $\U\times\R_+$-indexed, $\simp$-valued functors) such that for each $(\bu,\delta)\in \U\times \R_+,$ the following diagrams commute up to contiguity:
	\[\begin{tikzcd}
	\mathcal{S}_{(\bu,\delta)} \arrow[rd,swap, "\varphi_{(\bu,\delta)}"] \arrow[rr, "\mathcal{S}\left((\bu\mbox{,}\delta)\leq (\bu^{2\eps}\mbox{,}\delta+4\eps)\right)"] & &\mathcal{S}_{(\bu^{2\eps},\delta+4\eps)}\\
	& \mathcal{T}_{(\bu^{\eps},\delta+2\eps)} \arrow[ru,swap, "\psi_{(\bu^{\eps},\delta+2\eps)}"] &
	\end{tikzcd}
	\begin{tikzcd}
	& \mathcal{S}_{(\bu^{\eps},\delta+2\eps)} \arrow[rd, "\varphi_{(\bu^{\eps},\delta+2\eps)}"] &\\
	\mathcal{T}_{(\bu,\delta)}  \arrow[ru, "\psi_{(\bu,\delta)}"] \arrow[rr,swap, "\mathcal{T}\left((\bu\mbox{,}\delta)\leq (\bu^{2\eps}\mbox{,}\delta+4\eps)\right)"] & &\mathcal{T}_{(\bu^{2\eps},\delta+4\eps)}. 
	\end{tikzcd}\]
	Indeed, by functoriality of homology, the existence of such pair $(\Phi,\Psi)$ of natural transformations guarantees the $\bv$-interleaving between two $(\U\times \R_+)$-indexed modules $\Hrm_{k}\circ \mathcal{S}$ and $\Hrm_{k}\circ \mathcal{T}.$
	
	 Suppose that $\tripod$ is an $\eps$-tripod between $\gamma_X$ and $\gamma_Y$ (Definition \ref{def:distortion}), which exists by the assumption $\dintl(\gamma_X,\gamma_Y)<\eps$. Since $\varphi_X$ and $\varphi_Y$ are surjective, we can take two maps $\phi:X\rightarrow Y$ and $\psi:Y\rightarrow X$ such that
	 \begin{equation}\label{eq:phi and psi}
	     \{(x,\phi(x)):x\in X\}\cup \{(\psi(y),y):y\in Y \}\subset\{(x,y)\in X\times Y: \exists z\in Z, \ x=\varphi_X(z),\ \mbox{and}\ y=\varphi_Y(z)\}.
	 \end{equation}
First, let us check that for any $(\bu,\delta)\in\U\times \R_+,$ $\phi$ is a simplicial map from $\mathcal{S}(\bu,\delta)$ to $\mathcal{T}(\bu^{\eps},\delta+2\eps).$ Fix any $(\bu,\delta)\in \U\times \R_+$, and assume that an 1-simplex $\{x,x'\}\subset X$ is contained in the simplicial complex $\mathcal{S}(\bu,\delta).$ Denoting $\bu=[u,u']$, this means that $\left(\bigvee_{[u,u']}d_X\right)(x,x')\leq \delta.$ By Remark \ref{rem:rips}, it suffices to show that $\left(\bigvee_{[u,u']^\eps}d_Y\right)(\phi(x),\phi(x'))\leq \delta+2\eps.$ This is immediate from the fact that $R$ is an $\eps$-tripod, and the assumption $\{(x,\phi(x)):x\in X\}\subset \varphi_Y\circ\varphi_X^{-1}.$ 

Furthermore, $\phi$ serves as an $\bv$-morphism $\mathcal{S}\Rightarrow \mathcal{T}(\bv)$. Indeed, for any $(\bu,\delta)\leq (J,\delta')$ in $\U\times\R_+$, \[\mathcal{T}\left((\bu^{\eps},\delta+2\eps)\leq({J}^{\eps},\delta'+2\eps)  \right)\circ\phi=\mathrm{id}_Y\circ \phi=\phi=\phi\circ\mathrm{id}_X=\phi\circ\mathcal{S}\left((\bu,\delta)\leq(J,\delta')\right).\] By symmetry, $\psi:Y\Rightarrow X$ also serves as an $\bv$-morphism $\mathcal{T}\Rightarrow\mathcal{S}(\bv)$. 
	
	Next, we show that $(\phi,\psi)$ is an $\eps$-interleaving pair. By symmetry we only prove that for any $(\bu,\delta)\in \U\times \R_+$, $\psi_{(\bu^{\eps},\delta+\eps)}\circ \phi_{(\bu,\delta)}$ is contiguous to $\mathcal{S}\left((\bu,\delta)\leq (\bu^{2\eps},\delta+2\eps)\right)$, which is the identity map on the vertex set $X$. Let $\sigma\subset X$ be a simplex in $\mathcal{S}(\bu,\delta).$ We wish to show that there is a simplex in $\mathcal{S}(\bu^{2\eps},\delta+2\eps)$ that contains both $\sigma$ and the image  $\mathrm{im}(\sigma)$ of $\sigma$ by $\psi_{(\bu^{\eps},\delta+\eps)}\circ \phi_{(\bu,\delta)}$. To this end, we prove that the union $\sigma\cup\mathrm{im}(\sigma)$ has the diameter that is less than or equal to $\delta+2\eps$ in the (semi-)metric space $(X,\bigvee_{[u,u']^{2\eps}}d_X).$ Invoking Remark \ref{rem:inequality}, we consider the following three different cases of choosing any two elements in $\sigma\cup\mathrm{im}(\sigma)$:
	
	\begin{enumerate}[label=(\roman*)]
		\item Take any $x,x'\in \sigma$. Since $\sigma$ is a simplex in the Rips complex  $\mathcal{S}(\bu,\delta)=\newline \rips(X,\bigvee_{[u,u']}d_X),$ we have \[\left(\bigvee_{[u,u']^{2\eps}}d_X\right)(x,x')\leq \left(\bigvee_{[u,u']}d_X\right)(x,x')\leq \delta <\delta+2\eps.\]
	\end{enumerate}
Let $\tilde{R}:=\{(x,\phi(x)):x\in X\}\cup \{(\psi(y),y):y\in Y \}$ (see the inclusion in (\ref{eq:phi and psi})).
    \begin{enumerate}[resume,label=(\roman*)]
		\item Take $x\in \sigma$ and $x'\in\mathrm{im}(\sigma)$. Then $x'=\psi\circ\phi(x'')$ for some $x''\in \sigma$. Since $(x,\phi(x))$, $(x',\phi(x'')), (x'',\phi(x''))\in \tilde{R}$,
		\[\left(\bigvee_{[u,u']^{2\eps}}d_X\right)(x,x')\leq \left(\bigvee_{[u,u']^{\eps}}d_Y\right)(\phi(x),\phi(x''))+\eps\leq \left(\bigvee_{[u,u']}d_X\right)(x,x'')+2\eps\leq \delta+2\eps.\]
		
		\item Take any $x,x'\in \mathrm{im}(\sigma)$. Then there are $x'',x'''\in \sigma$ which are sent to $x,x'$ via $\psi\circ\phi$, respectively. Since $(x,\phi(x'')),(x',\phi(x''')),(x'',\phi(x'')), (x''',\phi(x'''))\in \tilde{R}$,
		\[\left(\bigvee_{[u,u']^{2\eps}}d_X\right)(x,x')\leq \left(\bigvee_{[u,u']^\eps}d_Y\right)(\phi(x''),\phi(x'''))+\eps\leq \left(\bigvee_{[u,u']}d_X\right)(x'',x''')+2\eps\leq \delta+2\eps.\]
	\end{enumerate}
\end{proof}


\subsection{Proof of  Proposition \ref{prop:decreasing property}}
\begin{lemma}[Convexity of admissible vectors]\label{lem:convexity}Suppose that $\ba,\bb\in \R^6_{\times}$ are admissible with $\ba\leq \bb$. Then, any $\bc\in \R_{\times}^6$ such that $\ba\leq\bc\leq\bb$ is also admissible.
\end{lemma}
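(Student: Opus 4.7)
The plan is to unpack the definitions of admissibility and the partial order on $\R_\times^6 = \R\times\Rop\times\Rop\times\Rop\times\R\times\R$ into their six defining coordinate inequalities and then verify each of the conditions defining admissibility for $\bc$ by a short two-step chain that passes through either $\ba$ or $\bb$.

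First I will record the data. Write $\ba=(a_1,\ldots,a_6)$, $\bb=(b_1,\ldots,b_6)$, $\bc=(c_1,\ldots,c_6)$, and unpack $\ba\leq\bc\leq\bb$ in $\R_\times^6$ as the twelve inequalities
\[
a_1\leq c_1\leq b_1,\quad a_2\geq c_2\geq b_2,\quad a_3\geq c_3\geq b_3,\quad a_4\geq c_4\geq b_4,\quad a_5\leq c_5\leq b_5,\quad a_6\leq c_6\leq b_6,
\]
taking care that the three middle coordinates use the reversed $\Rop$ order. Next I unpack admissibility of $\ba$ and of $\bb$ into the six inequalities per vector: $a_1\leq a_2$, $a_4\leq a_5$, $a_3,a_6\geq 0$, $a_4\leq a_1$, $a_2\leq a_5$, $a_3\leq a_6$, and analogously for $\bb$.

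Then I will verify the six admissibility conditions for $\bc$ one by one, each via a short sandwich. For $c_1\leq c_2$, I chain $c_1\leq b_1\leq b_2\leq c_2$ using admissibility of $\bb$ at the middle step. Symmetrically, for $c_4\leq c_5$ I use $c_4\leq a_4\leq a_5\leq c_5$. Non-negativity follows from $c_3\geq b_3\geq 0$ and $c_6\geq a_6\geq 0$. For the three "interval containment plus scale" inequalities I use
\[
c_4\leq a_4\leq a_1\leq c_1,\qquad c_2\leq a_2\leq a_5\leq c_5,\qquad c_3\leq a_3\leq a_6\leq c_6,
\]
where the middle inequality in each chain is supplied by admissibility of $\ba$.

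There is no real obstacle here; the only subtle point is bookkeeping the direction of the coordinate orders in $\R_\times^6$, which is why I will be careful to use admissibility of $\bb$ for the conditions concerning the "outer" indices $c_1,c_2$ (where $\bc\leq\bb$ gives inequalities in the required direction), and admissibility of $\ba$ for the conditions concerning the "inner" indices $c_4,c_5$ as well as the three containment conditions (where $\ba\leq\bc$ is the useful direction). Once the correct half of the hypothesis is used in each chain, the six required inequalities drop out immediately.
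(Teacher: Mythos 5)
Your proof is correct and follows essentially the same route as the paper's: you unpack the order on $\R_\times^6$ and the admissibility conditions, then chain coordinate inequalities through $\ba$ or $\bb$, which is exactly what the paper does (it merely concatenates your six short chains into two long ones). The choices of which endpoint supplies each middle inequality (e.g.\ $\bb$ for $c_1\leq c_2$, $\ba$ for $c_4\leq c_5$ and the containment/scale conditions) are all the right ones.
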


\begin{proof} Let $\ba:=(a_i)_{i=1}^6$ and $\bb:=(b_i)_{i=1}^6$ and $\bc=(c_i)_{i=1}^6$. From the assumptions that  $\ba\leq\bc\leq\bb$ and that $\ba,\bb$ are admissible, one can see that 
\[b_4\leq c_4\leq a_4 \leq a_1\leq c_1 \leq b_1 \leq b_2\leq c_2 \leq a_2 \leq  a_5 \leq c_5\leq b_5,\ \mbox{and}\]
\[0\leq b_3\leq c_3\leq a_3\leq a_6\leq c_6\leq b_6.\]
Therefore, $\bc$ is admissible.
\end{proof}

\begin{proof}[Proof of Proposition \ref{prop:decreasing property}]\label{proof:decreasing property}  Pick $\ba,\bb \in \R_\times ^6$ such that $\ba\leq \bb$.  We consider the following cases:

\begin{enumerate}[label=(\roman*)]
    \item Both $\ba$ and $\bb$ are admissible.\label{item:both admi}
    \item $\ba$ is admissible and $\bb$ is non-admissible.\label{item:admi and non}
    \item $\ba$ is non-admissible and $\bb$ is admissible.\label{item:non and admi}
    \item Both $\ba$ and $\bb$ are non-admissible. \label{item:both non}
\end{enumerate}
In case \ref{item:both admi}, let $\ba=(a_1,a_2,a_3,a_4,a_5,a_6)$ and $\bb=(b_1.b_2,b_3,b_4,b_5,b_6)$. Then we have the inclusions
\[\mathcal{R}_{b_3}\left(X,\bigvee_{[b_1,b_2]}d_X\right)\stackrel{i_1}{\hookrightarrow}\mathcal{R}_{a_3}\left(X,\bigvee_{[a_1,a_2]}d_X\right)\stackrel{i_2}{\hookrightarrow}\mathcal{R}_{a_6}\left(X,\bigvee_{[a_4,a_5]}d_X\right)\stackrel{i_3}{\hookrightarrow}\mathcal{R}_{b_6}\left(X,\bigvee_{[b_4,b_5]}d_X\right).\]
By applying $\Hrm_k$ to the above inclusions, we obtain the diagram of vector spaces and linear maps \[V_1\stackrel{\Hrm_k(i_1)}{\longrightarrow}V_2\stackrel{\Hrm_k(i_2)}{\longrightarrow}V_3\stackrel{\Hrm_k(i_3)}{\longrightarrow}V_4.\]
Notice that $\rk_k(\ba)$ is the rank of $\Hrm_k(i_2)$, whereas $\rk_k(\bb)$ is the rank of $\Hrm_k(i_3)\circ\Hrm_k(i_2)\circ \Hrm_k(i_1)$. This implies that $\rk_k(\ba)\geq \rk_k(\bb)$. In case \ref{item:admi and non}, $\bb$ cannot be trivially non-admissible by definition. Therefore, $\rk_k(\gamma_X)(\bb)=0$.
In case \ref{item:non and admi}, by Lemma \ref{lem:convexity}, $\ba$ must be trivially non-admissible and hence $\rk_k(\gamma_X)(\ba)=\infty$. In case \ref{item:both non}, by the definition of trivially non-admissible, it is not possible that $\ba$ is non-trivially non-admissible with $\bb$ being trivially non-admissible. Therefore, we always have \newline $\rk_k(\gamma_X)(\ba)\geq \rk_k(\gamma_X)(\bb)$.
\end{proof}

\subsection{Spatiotemporal Dendrogram of a DMS and Proof of Theorem \ref{thm:betti-0 stability}}\label{sec:details about SC}

\paragraph{Overview of the proof.} The Betti-$0$ function of a DMS $\gamma_X$ can be obtained by the two steps: First, adapting the ideas of the SLHC method (Section \ref{sec:SLHC})
, we induce the \emph{spatiotemporal SLHC dendrogram} $\theta(\gamma_X)$ of $\gamma_X$. Then, the dimension function $\dm\left(\theta(\gamma_X)\right)$ (Definition \ref{def:dimension function}) of $\theta(\gamma_X)$ coincides with the Betti-$0$ function of $\gamma_X$ given in Definition \ref{def:betti0}. Therefore, by proving that each of the successive associations $\gamma_X\mapsto \theta(\gamma_X) \mapsto \dm\left(\theta(\gamma_X)\right)$ is stable, we can show Theorem \ref{thm:betti-0 stability}.

\paragraph{Partition category and dendrograms.} Let $X$ be a non-empty finite set. Given any two partitions $P,Q$ of $X$, we write $P\leq Q$ if $P$ refines $Q$, i.e. for all $B\in P$, there exists a (unique) $C\in Q$ such that $B\subset C$. In this case, the surjective map $P\twoheadrightarrow Q$ sending each $B\in P$ to the unique block $C\in Q$ such that $B\subset C$ is called the \emph{natural map} from $P$ to $Q$.

\begin{definition}[$\Part(X)$ and its structure]\label{def:part} Let $X$ be a non-empty finite set.\newline  By $\Part(X)$, we mean the subcategory of $\sets$ described as follows:
\begin{enumerate}[label=(\roman*)]
    \item Objects: All partitions of $X$.
    \item Morphisms: For any two partitions $P,Q$ of $X$ with $P\leq Q$, the unique morphism $P\twoheadrightarrow Q$ is the natural map.
\end{enumerate}
\end{definition}
We remark that any partition $P$ of $X$ has the corresponding equivalence relation $\sim$ on $X$. Namely, $P=X/\sim$, where $x\sim x'$ if and only if $x,x'$ belong to the same block of $P$.

\begin{definition}[Dendrogram]\label{def:general dendrogram} Let $X$ be a non-empty finite set and let $\Pb$ be any poset. We will call any functor $\Pb\rightarrow \Part(X)$ a \emph{$\Pb$-indexed dendrogram over $X$} or simply a \emph{dendrogram}.
\end{definition}

\paragraph{The spatiotemporal SLHC dendrogram of a DMS.}   We aim at encoding multiscale clustering features of a DMS into a \emph{single} dendrogram (Definition \ref{def:spatiotemporal dendrogram}). Since we take into account both temporal and spatial parameters, this dendrogram will have a \emph{multidimensional} indexing poset, in contrast to its counterpart for a static metric space (Definition \ref{def:SLHC dendrogram}). We prove that this dendrogram is stable under perturbation of the input DMS (Theorem \ref{thm:stability of spatiotemporal dendrogram}). 

Let $\gammax$ be a DMS.  For $\bu\in \U$ and $\delta\in \R_+$, we define the equivalence relation $\sim_{X,\delta}^{\bu}$ on $X$ as follows:
	\[x\sim_{X,\delta}^{\bu} x'\ \Leftrightarrow\  \exists x=x_0,x_1,\ldots,x_n=x'\ \mbox{in $X$ s.t.} \left(\bigvee_{\bu}d_X\right)(x_i,x_{i+1})\leq \delta.\] 
Observe that, for any pair $(\bu,\delta)\leq (J,\delta')$ in $\U\times \R_+$, the relation $\sim_{X,\delta}^\bu$ is contained in $\sim_{X,\delta'}^{J}$ and hence	
\begin{equation}\label{eq:monotonic--dendrogram for dmss}
    \left(X/\sim_{X,\delta}^{\bu}\right) \leq \left(X/\sim_{X,\delta'}^{J}\right).
\end{equation}

By this monotonicity in (\ref{eq:monotonic--dendrogram for dmss}), we can extend the notion of SLHC dendrogram for static metric spaces (Definition \ref{def:SLHC dendrogram}) to the \emph{spatiotemporal SLHC dendrogram} of a DMS:
	
\begin{definition}[The spatiotemporal SLHC dendrogram of a DMS]\label{def:spatiotemporal dendrogram}Given any DMS $\gammax$, we define the \emph{spatiotemporal SLHC dendrogram} $\theta(\gamma_X):\U\times\R_+\rightarrow \Part(X)$ of $\gamma_X$ as follows:
\begin{enumerate}[label=(\roman*)]
    \item To each $(\bu,\delta)\in \U\times \R_+$, assign the partition $X/\sim_{X,\delta}^\bu$ of $X$.
    \item To each pair $(\bu,\delta)\leq(J,\delta')$ in $\U\times \R_+$, assign the natural map (Definition \ref{def:part})  \[X/\sim_{X,\delta}^\bu \twoheadrightarrow  X/\sim_{X,\delta'}^{J}.\] 
\end{enumerate}
\end{definition}

In order to prove Theorem \ref{thm:betti-0 stability}, we need:

\begin{theorem}[Stability of the spatiotemporal SLHC dendrogram]\label{thm:stability of spatiotemporal dendrogram}
Then,
\[\bdint^{\Sets}(\theta(\gamma_X),\theta_Y(\gamma_Y))\leq 2\cdot \dintl(\gamma_X,\gamma_Y).\]
\end{theorem}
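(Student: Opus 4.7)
The plan is to mimic the tripod-based construction from the proof of Theorem~\ref{thm:main2}, but carried out at the level of partitions rather than simplicial complexes. Given $\eps > \dintl(\gamma_X,\gamma_Y)$, fix an $\eps$-tripod $\tripod$ between $\gamma_X$ and $\gamma_Y$, and choose set maps $\phi\colon X\to Y$ and $\psi\colon Y\to X$ satisfying (\ref{eq:phi and psi}). For each $(\bu,\delta)\in\U\times\R_+$ I would define
\[f_{(\bu,\delta)}\colon X/\sim_{X,\delta}^{\bu}\ \to\ Y/\sim_{Y,\delta+2\eps}^{\bu^\eps},\quad [x]\mapsto[\phi(x)],\]
and symmetrically $g$ using $\psi$. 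These are the candidate natural transformations $\theta(\gamma_X)\Rightarrow\theta(\gamma_Y)(\bv)$ and $\theta(\gamma_Y)\Rightarrow\theta(\gamma_X)(\bv)$, where $\bv$ corresponds to the shift $(\bu,\delta)\mapsto(\bu^\eps,\delta+2\eps)$ in $\U\times\R_+\subset \R_\times^3$.

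Well-definedness of $f_{(\bu,\delta)}$ follows from the tripod inequality $\bigvee_{\bu^\eps}d_Y\leq_R \bigvee_\bu d_X+2\eps$ (Remark~\ref{rem:inequality}) applied termwise to any chain certifying $x\sim_{X,\delta}^{\bu}x'$: each consecutive pair $(x_i,x_{i+1})$ maps under $\phi$ to a pair of $\bigvee_{\bu^\eps}d_Y$-distance at most $\delta+2\eps$. Naturality in $(\bu,\delta)$ is automatic because the underlying set map $\phi$ does not depend on the parameter, and refinement maps in $\Part$ are the unique natural maps.

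The main obstacle, and the point that upgrades an ordinary interleaving to a \emph{full} one, is surjectivity of each $f_{(\bu,\delta)}$. Given $y\in Y$, both $(\psi(y),\phi(\psi(y)))$ and $(\psi(y),y)$ lie in the tripod relation $\varphi_Y\circ\varphi_X^{-1}$, so one more application of the same tripod inequality yields
\[\bigvee_{\bu^\eps}d_Y\bigl(\phi(\psi(y)),\,y\bigr)\ \leq\ \bigvee_{\bu}d_X\bigl(\psi(y),\psi(y)\bigr)+2\eps\ =\ 2\eps\ \leq\ \delta+2\eps,\]
and hence $f_{(\bu,\delta)}([\psi(y)])=[\phi(\psi(y))]=[y]$; surjectivity of each $g_{(\bu,\delta)}$ is entirely analogous.

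Finally, the interleaving identity $g(\bv)\circ f=\varphi^{2\bv}_{\theta(\gamma_X)}$ reduces to checking $[\psi(\phi(x))]=[x]$ in $X/\sim_{X,\delta+4\eps}^{\bu^{2\eps}}$, which follows from the estimate $\bigvee_{\bu^{2\eps}}d_X(x,\psi(\phi(x)))\leq 2\eps$ obtained by the same scheme applied to $\bigvee_{\bu^{2\eps}}d_X\leq_R\bigvee_{\bu^\eps}d_Y+2\eps$; the symmetric identity is analogous. Thus $(f,g)$ is a full $\bv$-interleaving with $\bv$ having components at most $2\eps$ in each of the three poset directions of $\U\times\R_+\subset\R^3_\times$. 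Because every structure map in $\theta(\gamma_X)$ and $\theta(\gamma_Y)$ is a surjection of partitions, postcomposing $f$ and $g$ with the remaining structure maps promotes this to a full $\vec{2\eps}$-interleaving, giving $\bdint^{\Sets}(\theta(\gamma_X),\theta(\gamma_Y))\leq 2\eps$; letting $\eps\searrow\dintl(\gamma_X,\gamma_Y)$ yields the claim.
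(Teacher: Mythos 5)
Your proposal is correct and follows essentially the same route as the paper's proof: the paper also fixes a tripod realizing (half of) $\dintl$, chooses $\phi,\psi$ through the surjections, induces block-wise maps on the spatiotemporal partitions, and verifies well-definedness, surjectivity, naturality, and the interleaving identities via Remark \ref{rem:inequality}. The only difference is bookkeeping: the paper rescales to an $(\eps/2)$-tripod so the interleaving is directly a uniform $\vec{\eps}$-shift, whereas you keep an $\eps$-tripod with the non-uniform shift $(\eps,\eps,2\eps)$ and then promote it to $\vec{2\eps}$ by post-composing with the (surjective) structure maps, which is a valid and equivalent normalization.
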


The proof of Theorem \ref{thm:betti-0 stability} will be straightforward by re-interpreting Definition \ref{def:betti0}:

\begin{definition}[Another interpretation of Definition {\ref{def:betti0}}]\label{def:betti0-rigorous}Let $\gammax$ be a DMS. We define the Betti-0 function $\beta_0^{\gamma_X}:\U\times\R_+\rightarrow \Z_+$ of $\gamma_X$ as the \emph{dimension function} of the spatiotemporal dendrogram $\theta(\gamma_X):\U\times \R_+\rightarrow \Part(X)$ of $\gamma_X$. In other words, $\beta_0^{\gamma_X}$ sends each  $(\bu,\delta)\in \U\times \R_+$ to the number of blocks in the partition $\theta(\gamma_X)(\bu,\delta).$
\end{definition}

\begin{proof}[Proof of Theorem \ref{thm:betti-0 stability}] Invoking that $\beta_0^{\gamma_X}$ and $\beta_0^{\gamma_Y}$ are the dimension functions of $\theta(\gamma_X)$ and $\theta(\gamma_Y)$, respectively, the proof straightforwardly follows from  Proposition \ref{prop:dimension stability via interleaving} and Theorem \ref{thm:stability of spatiotemporal dendrogram}. 
\end{proof}

\begin{proof}[Proof of Theorem \ref{thm:stability of spatiotemporal dendrogram}]Let $M:=\theta(\gamma_X):\U\times\R_+\rightarrow \Part(X)(\hookrightarrow \sets)$ and $N:=\theta(\gamma_Y):\U\times\R_+\rightarrow \Part(Y)(\hookrightarrow\sets)$. For each $(I,\delta)\in \U\times\R_+$, consider the equivalence relation $\sim^{X}_{I,\delta}$ on $X$ defined, for any $x,x'\in X$, as $x\sim^{X}_{I,\delta} x'$ if and only if there is a sequence $x=x_0,x_1,\ldots,x_l=x'$ in $X$ such that  $\bigvee_{I}d_X(x_i,x_{i+1})\leq \delta$  for each $i=0,\ldots,l-1$. Similarly, define the equivalence relation $\sim^{Y}_{I,\delta}$ on $Y$. Note that, by definition of $M$ and $N$,

\[M_{(I,\delta)}=X\big/\sim^{X}_{I,\delta}\ \ \mbox{and}\hspace{3mm} N_{(I,\delta)}=Y\big/\sim^{Y}_{I,\delta}.\]

For $x\in X$, let $[x]_{(I,\delta)}^X$ be the block containing $x$ in the partition $M_{(I,\delta)}$. Then, for any $(I,\delta),(J,\delta')\in \U\times\R_+$ with $(I,\delta)\leq(J,\delta')$, the internal morphism $\varphi_M((I,\delta),(J,\delta'))$ of $M$ sends $[x]_{(I,\delta)}^X$ to $[x]_{(J,\delta')}^X$ for each $x\in X$. We can describe the internal morphisms of $N$ in the same way.

Suppose that $2\ \dintl\left(\gamma_X,\gamma_Y\right)<\eps$ for some $\eps\in(0,\infty)$. Then, there exists an  $(\eps/2)$-tripod  $\tripod$ between $\gamma_X$ and $\gamma_Y$ (Definitions  \ref{def:distortion} and \ref{def:lambda dist}).

Since two maps $\varphi_X:Z\rightarrow X$ and $\varphi_Y:Z\rightarrow Y$ are surjective, we can take two maps $f:X\rightarrow Y$ and $g:Y\rightarrow X$ such that 
\begin{equation}\label{eq:the map f}
	\{(x,f(x)):x\in X\}\cup \{(g(y),y):y\in Y\}\subset \{(x,y)\in X\times Y: \exists z\in Z, \ x=\varphi_X(z),\ \mbox{and}\ y=\varphi_Y(z)\}.
\end{equation} 
We will show that $f,g$ induce a full $\eps$-interleaving pair between $M$ and $N$. For any $I=[u,u']\in\U$ and any $\alpha\in [0,\infty)$, let $I^{\alpha}:=[u-\alpha,u'+\alpha]$. 
For each $(I,\delta)\in \U\times\R_+$, we define $\bar{f}_{(I,\delta)}:M_{(I,\delta)}\rightarrow N_{(I^\eps,\delta+\eps)}$ as
\[[x]_{(I,\delta)}^X \mapsto \left[f(x)\right]_{(I^\eps,\delta+\eps)}^Y,\hspace{5mm}x\in X.\]
Similarly, we define $\bar{g}_{(I,\delta)}:N_{(I,\delta)}\rightarrow M_{(I^\eps,\delta+\eps)}$. It suffices to show that for each $(I,\delta)\in \U\times\R_+$,

\begin{enumerate}[label=(\roman*)]
    \item  $\bar{f}_{(I,\delta)}$  (resp. $\bar{g}_{(I,\delta)}$) is a well-defined set map from $M_{(I,\delta)}$ to $N_{(I^\eps,\delta+\eps)}$ (resp. from $N_{(I,\delta)}$ to $M_{(I^\eps,\delta+\eps)}$),\label{item:well-defined}
    \item $\bar{f}_{(I,\delta)}:M_{(I,\delta)}\rightarrow N_{(I^\eps,\delta+\eps)}$ and $\bar{g}_{(I,\delta)}:N_{(I,\delta)}\rightarrow M_{(I^\eps,\delta+\eps)}$ are surjective.\label{item:surjective}
    \item when $(I,\delta)\leq(J,\delta')$ in $\U\times\R_+$, \[\varphi_N((I^\eps,\delta+\eps),(J^\eps,\delta'+\eps))\circ\bar{f}_{(I,\delta)}=\bar{f}_{(J,\delta')}\circ\varphi_M((I,\delta),(J,\delta')),\] 
    \[ \varphi_M((I^\eps,\delta+\eps),(J^\eps,\delta'+\eps))\circ\bar{g}_{(I,\delta)}=\bar{g}_{(J,\delta')}\circ\varphi_N((I,\delta),(J,\delta')).\]\label{item:naturality}
    \item  $\bar{g}_{(I^\eps,\delta+\eps)}\circ \bar{f}_{(I,\delta)}=\varphi_M((I,\delta),(I^{2\eps},\delta+2\eps))$, and\newline $\bar{f}_{(I^\eps,\delta+\eps)}\circ \bar{g}_{(I,\delta)}=\varphi_N((I,\delta),(I^{2\eps},\delta+2\eps)).$\label{item:interleaving condition}
\end{enumerate}
 
We prove \ref{item:well-defined}. Fix $(I,\delta)\in \U\times \R_+$. Suppose that $x'\in [x]_{(I,\delta)}^X$. It suffices to show that $f(x')\in [f(x)]_{(I^\eps,\delta+\eps)}^Y$. By assumption, there exist $x=x_0,\ldots,x_l=x'$ in $X$ such that $\bigvee_{I}d_X(x_i,x_{i+1})\leq \delta$, $i=1,\ldots,l-1$. Then, invoking $R$ is an $(\eps/2)$-tripod between $\gamma_X$ and $\gamma_Y$ (see (\ref{eq:distor})), together with assumption (\ref{eq:the map f}) and Remark \ref{rem:inequality}, \[\bigvee_{I^{\eps}}d_Y(f(x_i),f(x_{i+1}))\leq \bigvee_{I^{(\eps/2)}}d_Y(f(x_i),f(x_{i+1}))\leq \delta+\eps
,\ \mbox{for}\ i=1,\ldots,l-1.\] This directly implies that $f(x')\in [f(x)]_{(I^\eps,\delta+\eps)}^Y$. In a similar way, it can be proved that $\bar{g}_{(I,\delta)}$ is well-defined.

Now we show \ref{item:surjective}. Fix $(I,\delta)\in \U\times \R_+$. We only prove that $\bar{f}_{(I,\delta)}:M_{(I,\delta)}\rightarrow N_{(I^\eps,\delta+\eps)}$ is surejctive. Pick any $[y]^Y_{(I^\eps,\delta+\eps)}\in N_{(I^\eps,\delta+\eps)}.$ Since $\varphi_Y:Z\rightarrow Y$ is surjective, there exists $z\in Z$ such that $\varphi_Y(z)=y.$ Let $x:=\varphi_X(z)$. Then, invoking $R$ is an $(\eps/2)$-tripod between $\gamma_X$ and $\gamma_Y$, together with assumption (\ref{eq:the map f}) and Remark \ref{rem:inequality}, \[\bigvee_{I^{\eps}}d_Y(y,f(x))\leq \bigvee_{I^{\eps/2}}d_Y(y,f(x))\leq \bigvee_{I} d_X(x,x)+\eps=0+\eps \leq \delta+\eps.\] 
This implies that
 $[f(x)]^Y_{(I^\eps,\delta+\eps)}=[y]^Y_{(I^\eps,\delta+\eps)}$. Also, by definition of $\bar{f}_{(I,\delta)}$, $[x]^X_{(I,\delta)}$ is sent to $[y]^Y_{(I^\eps,\delta+\eps)}$ via $\bar{f}_{(I,\delta)}$. Since $[y]^Y_{(I^\eps,\delta+\eps)}\in N_{(I^{\eps},\delta+\eps)}$ was arbitrary chosen, we have shown the surjectivity of $\bar{f}_{(I,\delta)}$.
 
 Next we prove \ref{item:naturality}. Fix $(I,\delta)\leq(J,\delta')$ in $\U\times\R_+$. We only show \[\varphi_N((I^\eps,\delta+\eps),(J^\eps,\delta'+\eps))\circ\bar{f}_{(I,\delta)}=\bar{f}_{(J,\delta')}\circ\varphi_M((I,\delta),(J,\delta')).\] By the definition of maps $\varphi_M(\cdot,\cdot),\varphi_N(\cdot,\cdot),\bar{f}_{(\cdot,\cdot)}$ and $\bar{g}_{(\cdot,\cdot)}$, for any $[x]_{(I,\delta)}^X \in M_{(I,\delta)}$, 
 \begin{align*}
     \varphi_N((I^\eps,\delta+\eps),(J^\eps,\delta'+\eps))\circ\bar{f}_{(I,\delta)} \left([x]_{(I,\delta)}^X \right)&=\varphi_N((I^\eps,\delta+\eps),(J^\eps,\delta'+\eps))\left(\left[f(x)\right]^Y_{(I^\eps,\delta+\eps)}\right)\\&=\left[f(x)\right]^Y_{(J^\eps,\delta'+\eps)},
 \end{align*}
 \[\bar{f}_{(J,\delta')}\circ\varphi_M((I,\delta),(J,\delta'))\left([x]_{(I,\delta)}^X\right)=\bar{f}_{(J,\delta')}\left([x]_{(J,\delta')}^X\right)=\left[f(x)\right]^Y_{(J^\eps,\delta'+\eps)}.\]
 
 Finally, we prove \ref{item:interleaving condition}.  Fix $(I,\delta)\in \U\times \R_+$. We only show \[\bar{g}_{(I^\eps,\delta+\eps)}\circ \bar{f}_{(I,\delta)}=\varphi_M((I,\delta),(I^{2\eps},\delta+2\eps)).\] Take any $[x]_{(I,\delta)}^X \in M_{(I,\delta)}$. Then, by $\bar{g}_{(I^\eps,\delta+\eps)}\circ \bar{f}_{(I,\delta)}$, the block $[x]_{(I,\delta)}^X$ is sent to $[g\circ f(x)]_{(I^{2\eps},\delta+2\eps)}^X$. By invoking that $R$ is an $(\eps/2)$-tripod between $\gamma_X$ and $\gamma_Y$ and (\ref{eq:the map f}) and Remark \ref{rem:inequality}, we also have 
 \[\bigvee_{I^{2\eps}}d_X\left(x,g\circ f(x)\right)\leq \bigvee_{I^{(\eps/2)}}d_X\left(x,g\circ f(x)\right)\leq  \bigvee_{I}d_Y(f(x),f(x))+ \eps =0+\eps \leq \delta+2\eps.\]This implies that $[x]_{\delta+2\eps}^X=[g\circ f(x)]_{\delta+2\eps}^X$, completing the proof.
\end{proof}

For $t\in \R$, consider $\bt\in \U$.
\begin{remark}[Comprehensiveness of Definition \ref{def:spatiotemporal dendrogram}]\label{rem:basic0}We remark the following (see Figure \ref{fig:comprehensiveness}):
\begin{enumerate}[label=(\roman*)]
    \item Consider the constant DMS $\gamma_X\equiv (X,d_X)$ as in Example \ref{ex:constant}. Then, the spatiotemporal SLHC dendrogram of $\gamma_X$ is amount to the SLHC dendrogram (Definition \ref{def:SLHC dendrogram}) of $(X,d_X)$: for all $\bu\in \U$ and $\delta\in \R_+$, \[\theta(\gamma_X)_{(\bu,\delta)}=\theta(X,d_X)_\delta.\] \label{item:basic0-1}
    \item Let $\gammax$ be a DMS. For each $t\in \R$, we have the SLHC dendrogram $\theta(X,d_X(t)):\R_+\rightarrow \Part(X)$ of the metric space $(X,d_X(t))$ (Definition \ref{def:SLHC dendrogram}). All those dendrograms are incorporated by $\theta(\gamma_X)$ in the following sense:
    \[\theta_X(\gamma_X)_{(\bt,\delta)}=\theta(X,d_X(t))_\delta, \ \ t\in \R, \ \delta\in \R_+.\]\label{item:basic0-2}
\end{enumerate}
\end{remark}
\begin{figure}
    \centering
    \includegraphics[width=0.4\textwidth]{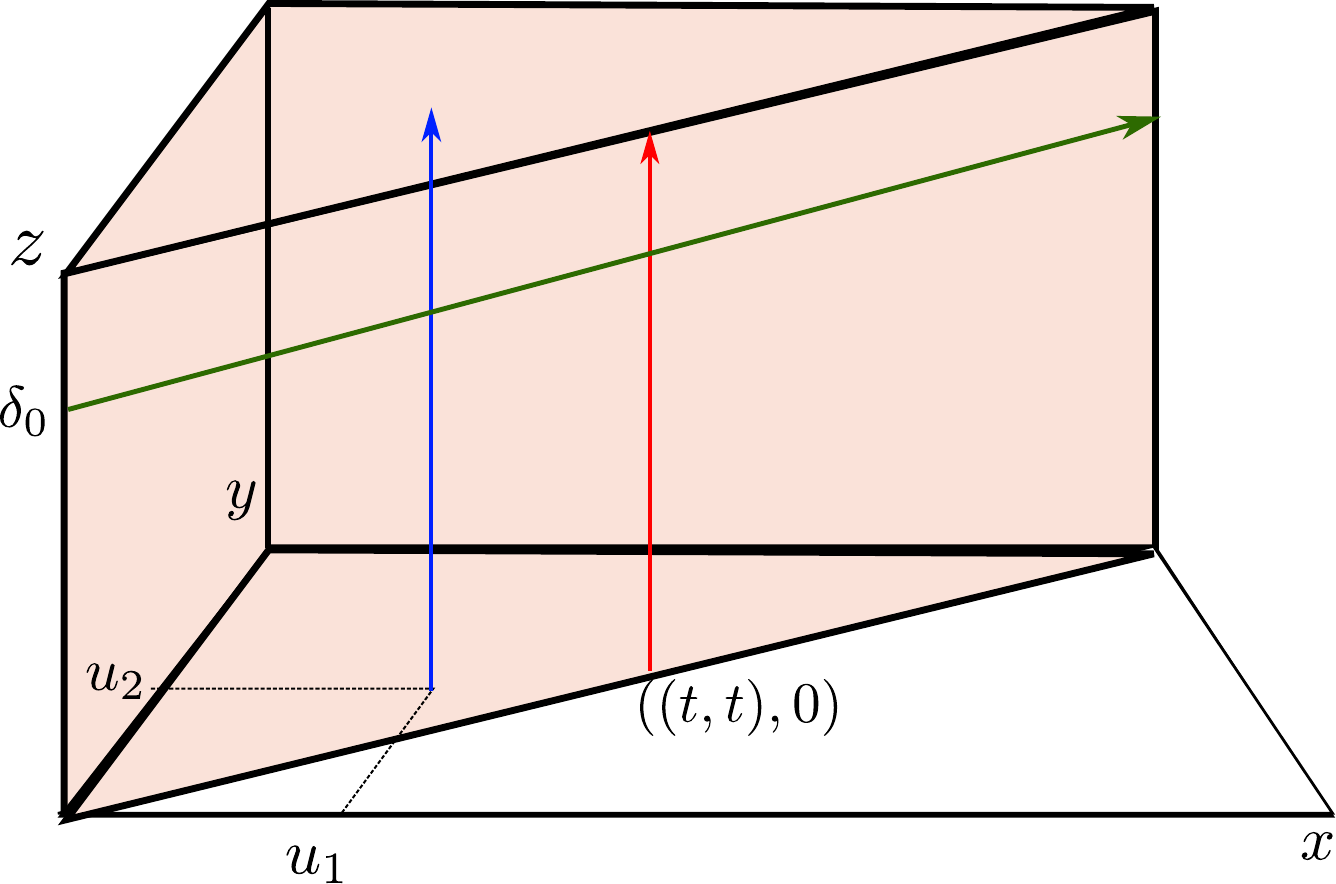}
    \caption{Consider a DMS $\gammax$. (1) If  $\gamma_X\equiv(X,d_X)$, then the SLHC dendrogram $\theta(X,d_X)$ is encoded along any vertical ray, such as blue or red rays in the figure (Remark \ref{rem:basic0} \ref{item:basic0-1}). (2) For each $t\in \R$, the SLHC dendrogram $\theta(X,d_X(t))$ of $(X,d_X(t))$ is recorded along the red ray (Remark \ref{rem:basic0}\ref{item:basic0-2}) (3) Along the greed horizontal line at height $\delta_0$ over the diagonal plane $y=x$, the formigram induced from $\gamma_X$ with respect to the connectivity parameter $\delta_0$ is encoded.}
    \label{fig:comprehensiveness}
\end{figure}
\begin{remark}[Connection to {\cite{kim2017stable}}]\label{rem:formigram}
Let $\gammax$ be a DMS and fix $\delta_0\in \R_+$. The map $\theta_X^\delta:\R\rightarrow \Part(X)$ defined as \[\theta_X^{\delta_0}(t)=X/\sim_{X,\delta_0}^{\bt}\ \mbox{for all $t\in\R$}\] is the \emph{formigram} induced from $\gamma_X$ with respect to the connectivity parameter $\delta$ \cite{kim2017stable}. 
\end{remark}

\subsection{Proof of Theorem \ref{thm:better}}\label{sec:new lower bound}

\begin{proof}[Proof of Theorem \ref{thm:better}] We utilize $\lmulti\cdot \rmulti$ instead of $\{\cdot\}$ to denote \emph{multisets.}
Let $m:=\abs{X}$, $n:=\abs{Y}$, and without loss of generality assume that $m\leq n$. Then, for some  $a_1\leq \ldots \leq a_{m-1}$, and $b_1\leq \ldots \leq b_{n-1}$ in $\R_+$, we have \[\mathcal{A}:=\dgm_0\left(\ripsss(X,d_X)\right)\setminus \lmulti(0,+\infty)\rmulti=\lmulti(0,a_i)\rmulti_{i=1}^{m-1},\]
\[  \mathcal{B}:=\dgm_0\left(\ripsss(Y,d_Y)\right)\setminus \lmulti(0,+\infty)\rmulti=\lmulti(0,b_j)\rmulti_{j=1}^{n-1}.\] 
Then, 	\[\bott\left(\dgm_0\left(\ripsss(X,d_X)\right),\dgm_0\left(\ripsss(Y,d_Y)\right) \right)= \bott(\mathcal{A},\mathcal{B}).\]
Let $A=\lmulti a_1',\ldots,a_{n-1}'\rmulti$ and $B=\lmulti b_1,\ldots,b_{n-1} \rmulti$, where $A$ consists of $n-m$ zeros at the beginning, followed by the sequence $a_1,a_2\ldots,a_{m-1}.$ Then, notice that 
	\[ \bott(\mathcal{A},\mathcal{B})\leq\max_{i=1}^{n-1}\abs{a_i'-b_i}.\]
Therefore, it suffices to show that
\begin{equation}\label{eq:betti and bottle}
    \max_{i=1}^{n-1}\abs{a_i'-b_i}\leq \dint\left(\beta_0^{(X,d_X)},\beta_0^{(Y,d_Y)}\right).
\end{equation}
Let $\eps:=\dint\left(\beta_0^{(X,d_X)},\beta_0^{(Y,d_Y)}\right),$ i.e. 
\begin{equation}\label{eq:interleaving}
\mbox{for all $\delta\in \R_+$},\ 
\beta_0^{(X,d_X)}(\delta+\eps)\leq \beta_0^{(Y,d_Y)}(\delta), \mbox{and}\ \ \beta_0^{(Y,d_Y)}(\delta+\eps)\leq \beta_0^{(X,d_X)}(\delta).
\end{equation}
Observe the following:
\begin{enumerate}[label=(\roman*)]
	\item $\beta_0^{(X,d_X)},\beta_0^{(Y,d_Y)}$ are monotonically decreasing as maps from $\R_+$ to $\Z_+$,\label{item:obs1}
	\item For $0\leq \delta<a_{1}=a_{n-m+1}',$ we have $\beta_0^{(X,d_X)}(\delta)=m$,\label{item:obs2} 
	\item For integers $k=1,\ldots, m-1$, we have $ a_{n-k}'=\min\left\{\delta\in \R_+:\beta_0^{(X,d_X)}(\delta)=k\right\},$ \label{item:obs3}
	\item For integers $k=1, \ldots, n-1$, we have $ b_{n-k}=\min\left\{\delta\in \R_+:\beta_0^{(Y,d_Y)}(\delta)=k\right\}.$ \label{item:obs4}
\end{enumerate}	
In order to show inequality (\ref{eq:betti and bottle}), first we show that $\abs{a_i'-b_i}\leq \eps$  for $1\leq i\leq n-m$. By construction we have $a_1'=a_2'=\ldots=a_{n-m}'=0$, and thus it suffices to show that $b_{i}\leq \eps$ for $1\leq i\leq n-m$. By the assumption in (\ref{eq:interleaving}) and item \ref{item:obs2}, we have 
\[\beta_0^{(Y,d_Y)}(\eps) \leq \beta_0^{(X,d_X)}(0)=m.\]
Also, by items \ref{item:obs1} and \ref{item:obs4}, we have $b_{n-m}\leq \eps.$ Since $b_1\leq b_2\leq \ldots \leq b_{n-m-1}\leq b_{n-m}$, we have shown that $b_i\leq \eps$ for  $1\leq i\leq n-m$, as desired. 

Now we show that $\abs{a_i'-b_i}\leq \eps$ for $i=n-m+1,n-m+2,\ldots,  n-1$. By re-indexing it suffices to prove that $\abs{a_{n-k}'-b_{n-k}}\leq \eps$ for $k=1,\ldots, m-1$.  Notice that, for $k=1,\ldots, m-1$, by the assumption in (\ref{eq:interleaving}) and item \ref{item:obs4}, we have 
\[\beta_0^{(X,d_X)}(b_{n-k}+\eps)\leq \beta_0^{(Y,d_Y)}(b_{n-k})=k.\]
Then by items \ref{item:obs1} and \ref{item:obs3}, we have that $a'_{n-k}\leq b_{n-k}+\eps$. Similarly, one can prove that for $k=1,\ldots, m-1$, it holds that $b_{n-k}\leq a_{n-k}'+\eps$. Therefore, we have $\abs{a_{n-k}'-b_{n-k}}\leq \eps$ for $k=1,\ldots, m-1$, as desired.
\end{proof}

\section{Discussion}\label{sec:conclusion}

The primary contribution of this paper is to construct multiparameter persistent homology groups from dynamic metric data. Not only are these persistent homology groups stable to perturbations of the input, but also this stability result turns out to be a generalization of a fundamental stability theorem in topological data analysis. A second practical contribution of our paper is to propose a polynomial time algorithm that can be carried out for quantifying the behavioral difference between two dynamic metric data sets.

\newpage

\appendix

\section{Discretization of DMSs}\label{sec:discretization of a DMS}

In order to compute the lower bound for the distance $\dintl$ given in Theorems \ref{thm:rank k stability} and \ref{thm:betti-0 stability}  in practice, we need to \emph{discretize} DMSs, i.e. turn DMSs into a locally constant DMSs. This discretization depends on the resolution parameter $\alpha\in (0,\infty)$, described as below. We will show that, if $\alpha$ is small and DMSs $\gamma_X$ and $\gamma_Y$ satisfy a mild assumption, then the lower bounds for $\dintl(\gamma_X,\gamma_Y)$ given in Theorems \ref{thm:rank k stability} and \ref{thm:betti-0 stability} can be well-approximated using the $\alpha$-\emph{discretized} DMSs associated to $\gamma_X$ and $\gamma_Y$.

We call any map $i:\Z^d\rightarrow \R^d$ \emph{grid-like} if $i$ is an \emph{strictly} injective poset morphism, i.e. 

\begin{enumerate}[label=(\roman*)]
    \item for any pair $\ba=(a_1,\ldots,a_d)<\bb=(b_1,\ldots,b_d)$ with $a_i<b_i$, $i=1,\ldots,d$ in $\Z^d$, for $i(\ba)=(a_1',\ldots,a_d')$ and $i(\bb)=(b_1',\ldots,b_d')$, we have $a_i'<b_i'$, $i=1,\ldots,d$.
    \item For all $\bc=(c_1,\ldots,c_d) \in \R^d$, there are $\ba,\bb \in \Z^d$ such that $i(\ba)\leq \bc \leq i(\bb)$.
\end{enumerate}

Given a grid-like $i:\Z^d \rightarrow \R^d$, for any $\ba\in \R^d$, define $\lfloor \ba \rfloor_i$ to be the maximum element in the image of $\Z^d$ by $i$ which does not exceed $\ba$.

\begin{definition}[Discrete persistence modules]\label{def:discrete}We call a persistence module $M:\R^d\rightarrow \C$ \emph{discrete} if there exists a grid-like map $i:\Z^d\rightarrow \R^d$ such that for each $\ba\in \R^d$, the morphism $\varphi_M(\lfloor\ba\rfloor_i,\ba):M_{\lfloor\ba\rfloor_i}\rightarrow M_{\ba}$ is an isomorphism.  

\end{definition}

Let $\alpha\in (0,\infty)$. For any $t\in \R$, let $\lfloor t\rfloor_\alpha\in \alpha\Z$ be the greatest element in $\alpha\Z$ which does not exceed $t$. Given any DMS $\gammax$, we define the \emph{$\alpha$-discretization} of $\gamma_X$:

\begin{definition}[Discretization of a DMS] Let $\gammax$ be any DMS and let $\alpha\in (0,\infty)$. The \emph{$\alpha$-discretization} of $\gamma_X$ is the $\R$-parametrized family of finite (pseudo-)metric spaces $\gamma_X^\alpha:=\left\{\left(X,d_X^{\alpha\Z}(t)\right):t\in \R \right\}$, where \[d_X^{\alpha\Z}(t):=d_X(\lfloor t \rfloor_\alpha):X\times X\rightarrow \R_+.\]
\end{definition}

Notice that the discretization $\gamma_X^\alpha$ of $\gamma_X$ does \emph{not} necessarily satisfy Definition \ref{def:dms} \ref{item:dynamic2} and  \ref{item:dynamic3} and hence $\gamma_X^\alpha$ does \emph{not} deserve to be called a DMS. However, for convenience, we will call $\gamma_X^\alpha$ the \emph{$\alpha$-discretized DMS} of $\gamma_X$ or simply the \emph{discretized} DMS.

We can regard $\dintl$ as an extended pseudometric on a collection containing both all DMSs \emph{and} all discretized DMSs: Indeed, items \ref{item:dynamic2} and  \ref{item:dynamic3} in Definition \ref{def:dms} are not necessary to claim that $\dintl$ satisfies the triangle inequality (see the proof of  \cite[Theorem 9.14]{kim2017stable} in \cite[Section 11.4.2]{kim2017stable}).

A DMS $\gammax$ is said to be \emph{$l$-Lipschitz} if  $d_X(\cdot)(x,x'):\R\rightarrow \R_+$ is $l$-Lipschitz for every $x,x'\in X$. Assuming that $\gamma_X$ is $l$-Lipschitz, the smaller the resolution parameter $\alpha$ is, the closer the discretized DMS $\gamma_X^\alpha$ to $\gamma_X$ is: 

\begin{proposition}\label{prop:discretization error}Let $\gammax$ be any $l$-Lipschitz DMS. Then,
\[\dintl\left(\gamma_X,\gamma_X^{\alpha}\right)\leq l\alpha.\]
\end{proposition}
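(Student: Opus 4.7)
The plan is to exhibit an explicit tripod $R$ between $X$ and $X$ (regarded as the underlying sets of $\gamma_X$ and $\gamma_X^\alpha$ respectively) whose distortion is at most $l\alpha$. The natural candidate is the identity tripod $R : X \xtwoheadleftarrow{\mathrm{id}} X \xtwoheadrightarrow{\mathrm{id}} X$, obtained by taking $Z := X$ with both surjections equal to $\mathrm{id}_X$. With this choice, the comparison relation $\leq_R$ reduces to ordinary pointwise comparison of functions $X \times X \to \R$, so the two distortion inequalities in \eqref{eq:distor} become, for all $t \in \R$ and $x,x' \in X$,
\begin{align*}
\min_{s \in [t-\eps,\, t+\eps]} d_X(s)(x,x') &\leq d_X(\lfloor t\rfloor_\alpha)(x,x') + 2\eps, \\
\min_{s \in [t-\eps,\, t+\eps]} d_X(\lfloor s\rfloor_\alpha)(x,x') &\leq d_X(t)(x,x') + 2\eps.
\end{align*}

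The only ingredient needed is the pointwise bound provided by $l$-Lipschitzness: since $|t - \lfloor t\rfloor_\alpha| \leq \alpha$, for every $t \in \R$ and every $x,x' \in X$ one has $|d_X(t)(x,x') - d_X(\lfloor t\rfloor_\alpha)(x,x')| \leq l\alpha$, i.e., $|d_X(t)(x,x') - d_X^{\alpha\Z}(t)(x,x')| \leq l\alpha$. Setting $\eps := l\alpha$ and estimating each minimum above by its value at $s = t$, both required inequalities follow at once: the first from $d_X(t)(x,x') \leq d_X(\lfloor t\rfloor_\alpha)(x,x') + l\alpha \leq d_X^{\alpha\Z}(t)(x,x') + 2\eps$, and the second from $d_X^{\alpha\Z}(t)(x,x') \leq d_X(t)(x,x') + l\alpha \leq d_X(t)(x,x') + 2\eps$. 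Hence $R$ is an $l\alpha$-tripod, so $\dyndis(R) \leq l\alpha$, and by Definition~\ref{def:lambda dist} we conclude $\dintl(\gamma_X, \gamma_X^\alpha) \leq l\alpha$.

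I do not foresee a substantial obstacle, as the argument is a direct unpacking of definitions combined with a single application of the Lipschitz hypothesis. The only minor subtlety is that $\gamma_X^\alpha$ is not, strictly speaking, a DMS in the sense of Definition~\ref{def:dms} (it may violate the genericity condition \ref{item:dynamic2} and the continuity condition \ref{item:dynamic3}), so one must invoke the observation made just before the proposition that $\dintl$ extends without change to the enlarged collection of time-parametrized families of pseudo-metric spaces, because conditions \ref{item:dynamic2}--\ref{item:dynamic3} play no role in the verification of the tripod axioms or in the formula for $\dyndis$.
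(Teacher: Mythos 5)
Your proof is correct and follows essentially the same route as the paper's: both exhibit the identity tripod $R:X \xtwoheadleftarrow{\mathrm{id}_X} X \xtwoheadrightarrow{\mathrm{id}_X} X$ and verify the $\eps$-tripod conditions with $\eps=l\alpha$ using the single Lipschitz estimate $\abs{d_X(t)(x,x')-d_X(\lfloor t\rfloor_\alpha)(x,x')}\leq l\alpha$. The only (cosmetic) difference is that you bound both minima by their value at $s=t$, whereas the paper (after normalizing $\alpha=1$) evaluates at nearby grid points; your remark that $\dintl$ extends to discretized families even though they violate Definition~\ref{def:dms}\ref{item:dynamic2}--\ref{item:dynamic3} matches the paper's discussion preceding the proposition.
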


Note that for the discretized DMS $\gamma_X^\alpha$, we can define the rank invariant and the Betti-$0$ function of $\gamma_X^\alpha$ in the same way as in Definitions \ref{def:rank invariant_brief} and \ref{def:betti0}, respectively. Furthermore, in a bounded time interval $I\subset \R$, it is not difficult to check that both the Betti-$0$ function $\beta_0^{\gamma_X^\alpha}$ and the rank invariant $\rk_k(\gamma_X^\alpha),\ k\in\Z_+$ are discrete (Definition \ref{def:discrete}). Therefore, one can straightforwardly utilize the results in Section \ref{sec:algorithm} for computing $\dint$. 

\begin{proposition}[Approximating $\dintl$ from below with discretized DMSs]\label{prop:approximation} 
Let $\gammax$ and $\gammay$ be any two $l$-Lipschitz DMSs. 

\[\dint\left(\beta_0^{\gamma_X^\alpha},\beta_0^{\gamma_Y^\alpha}\right)-4l\alpha\hspace{3mm}\leq\hspace{3mm} 2\cdot\dintl(\gamma_X,\gamma_Y)\hspace{3mm}\mbox{and,}\]
\[\dint\left(\rk_k(\gamma_X^\alpha),\rk_k(\gamma_Y^\alpha)\right)-4l\alpha\hspace{3mm}\leq\hspace{3mm} 2\cdot\dintl(\gamma_X,\gamma_Y),\hspace{3mm} k\in\Z_+.\]
\end{proposition}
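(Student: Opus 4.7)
The plan is to combine three ingredients: (i) the stability theorems for Betti-$0$ and for the rank invariant (Theorems \ref{thm:betti-0 stability} and \ref{thm:rank k stability}), (ii) the discretization estimate of Proposition \ref{prop:discretization error}, and (iii) the triangle inequality for $\dintl$ applied on the larger class that also contains discretized DMSs. Since the stability inequalities \eqref{eq:betti-0 stability} and \eqref{eq:rank k stability} are proved by exhibiting tripods and their distortions (cf.\ the proofs of Theorems \ref{thm:main2} and \ref{thm:stability of spatiotemporal dendrogram}), and these arguments never invoke conditions \ref{item:dynamic2} or \ref{item:dynamic3} of Definition \ref{def:dms}, they apply verbatim to $\gamma_X^{\alpha}$ and $\gamma_Y^{\alpha}$. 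Similarly, as noted in the paragraph preceding Proposition \ref{prop:discretization error}, $\dintl$ satisfies the triangle inequality on any collection of $\R$-parametrized pseudometric spaces.

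First, I would apply the triangle inequality
\[
\dintl\bigl(\gamma_X^{\alpha},\gamma_Y^{\alpha}\bigr) \;\leq\; \dintl\bigl(\gamma_X^{\alpha},\gamma_X\bigr) + \dintl(\gamma_X,\gamma_Y) + \dintl\bigl(\gamma_Y,\gamma_Y^{\alpha}\bigr).
\]
By Proposition \ref{prop:discretization error}, each of the outer two terms on the right is at most $l\alpha$, so
\[
\dintl\bigl(\gamma_X^{\alpha},\gamma_Y^{\alpha}\bigr) \;\leq\; \dintl(\gamma_X,\gamma_Y) + 2l\alpha.
\]
Next, I would invoke Theorem \ref{thm:betti-0 stability} (extended as above to the discretized setting) to obtain
\[
\dint\!\left(\beta_0^{\gamma_X^{\alpha}},\beta_0^{\gamma_Y^{\alpha}}\right) \;\leq\; 2\cdot \dintl\bigl(\gamma_X^{\alpha},\gamma_Y^{\alpha}\bigr) \;\leq\; 2\cdot \dintl(\gamma_X,\gamma_Y) + 4l\alpha,
\]
and rearranging yields the first inequality of the proposition. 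The second inequality follows identically by instead invoking Theorem \ref{thm:rank k stability} for each $k\in \Z_+$.

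The main routine obstacle is verifying Proposition \ref{prop:discretization error}: one must check that the identity tripod $R:\,X \twoheadleftarrow X \twoheadrightarrow X$ (with $\varphi_X=\varphi_Y=\mathrm{id}_X$) is an $l\alpha$-tripod between $\gamma_X$ and $\gamma_X^{\alpha}$ in the sense of Definition \ref{def:distortion}. For this one fixes $t\in \R$ and $x,x'\in X$ and notes that $\lfloor s\rfloor_\alpha \in [t-l\alpha, t+l\alpha]$ whenever $s\in [t]^{l\alpha}$, so by $l$-Lipschitzness
\[
\bigl|d_X(s)(x,x') - d_X^{\alpha\Z}(t)(x,x')\bigr| = \bigl|d_X(s)(x,x') - d_X(\lfloor t\rfloor_\alpha)(x,x')\bigr| \;\leq\; 2l\alpha;
\]
taking the minimum over $s \in [t]^{l\alpha}$ (and likewise swapping the roles) gives condition \eqref{eq:distor} with $\eps = l\alpha$. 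The other remaining check is that the stability proofs really do pass through the discretized category; this is immediate because the tripods and contiguity arguments of Sections \ref{sec:details about PH} and \ref{sec:details about SC} depend only on the pointwise values $d_X(t)$, never on continuity or on the existence of a time at which $d_X(t)$ is an honest metric.
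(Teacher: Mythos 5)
Your proposal is correct and follows essentially the same route as the paper: the triangle inequality for $\dintl$ over the enlarged class containing discretized DMSs, Proposition \ref{prop:discretization error} to bound $\dintl(\gamma_X^\alpha,\gamma_X)$ and $\dintl(\gamma_Y^\alpha,\gamma_Y)$ by $l\alpha$, and then Theorems \ref{thm:betti-0 stability} and \ref{thm:rank k stability} applied to $\gamma_X^\alpha,\gamma_Y^\alpha$. One ancillary quibble: in your re-verification of Proposition \ref{prop:discretization error}, the claim that $\lfloor s\rfloor_\alpha\in[t-l\alpha,t+l\alpha]$ for all $s\in[t]^{l\alpha}$ (and the resulting uniform bound $\abs{d_X(s)(x,x')-d_X(\lfloor t\rfloor_\alpha)(x,x')}\leq 2l\alpha$) is not correct in general since $\lfloor s\rfloor_\alpha$ may lie up to $\alpha$ below $s$, but this does not matter because the two one-sided conditions in (\ref{eq:distor}) only need the choice $s=t$, which gives slack $l\alpha\leq 2l\alpha$ on each side, and in any case the paper's Proposition \ref{prop:discretization error} can simply be cited.
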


 \begin{proof}[Proof of Proposition \ref{prop:discretization error}]
 For ease of notation, we prove the statement assuming that $\alpha=1$, without loss of generality. Consider the tripod $R:X \xtwoheadleftarrow{\mathrm{id}_X} X \xtwoheadrightarrow{\mathrm{id}_X} X$ (Definition \ref{def:tripod}). We prove that $R$ is a $l$-tripod between $\gamma_X$ and $\gamma_X^{\alpha\Z}$ (Definition \ref{def:distortion}). Fix $t\in \R$.  Since $\lfloor t\rfloor \in [t-1,t+1]=[t]^1$, it is clear that $\bigvee_{[t]^1}d_X\leq_R d_X^{\alpha}(t)$ and hence $\bigvee_{[t]^1}d_X\leq_R d_X^{\alpha\Z}(t)+2l.$ 
It remains to show that $\bigvee_{[t]^1}d_X^{\alpha\Z}\leq_R d_X(t)+2l.$ Observe that, for any $x,x'\in X$, $\left(\bigvee_{[t]^1}d_X^{\alpha\Z}\right)(x,x')$ is the minimum among $d_X(\lfloor t\rfloor-1)(x,x'),\ d_X(\lfloor t\rfloor)(x,x')$ and $d_X(\lfloor t\rfloor+1)(x,x').$ Also, observe that all of $\lfloor t\rfloor -1, \lfloor t\rfloor, \lfloor t\rfloor+1$ belong to the closed interval $[t]^2=[t-2,t+2]$. Therefore, invoking that $\gamma_X$ is $l$-Lipschitz, for any $x,x'\in X$, \[\left(\bigvee_{[t]^1}d_X^{\alpha\Z}\right)(x,x')\leq d_X(t)(x,x')+2l.\]
This implies that $\bigvee_{[t]^1}d_X^{\Z}\leq_R d_X(t)+2l$, as desired.
\end{proof}

\begin{proof}[Proof of Proposition \ref{prop:approximation}]We have
\begin{align*}
    \dintl(\gamma_X^\alpha,\gamma_Y^\alpha)&\leq \dintl(\gamma_X^\alpha,\gamma_X)+\dintl(\gamma_X,\gamma_Y)+\dintl(\gamma_Y,\gamma_Y^\alpha)&\mbox{by the triangle inequality,}\\
    &\leq 2l\alpha+\dintl(\gamma_X,\gamma_Y)&\mbox{by Proposition \ref{prop:discretization error}.}
\end{align*}
Also, by Theorem \ref{thm:betti-0 stability},  we obtain $\dint\left(\beta_0^{\gamma_X^\alpha},\beta_0^{\gamma_Y^\alpha}\right)\leq 2\cdot\dintl(\gamma_X^\alpha,\gamma_Y^\alpha)$, and in turn the first inequality in the statement. The second inequality can be proved in a similar way.
\end{proof}

\section{Relationship between the rank invariant and CROCKER-plot}\label{sec:Crocker}

We relate the rank invariant of a DMS  to the CROCKER plot of \cite{topaz}:
\begin{definition}[The CROCKER plots of a DMS \cite{topaz}]\label{def:crocker}Let $\gammax$ be a DMS. For $k\in\Z_+$, the $k$-th CROCKER plot $C_k(\gamma_X)$ of $\gamma_X$ is a map $\R\times\R_+\rightarrow \Z_+$ sending $(t,\delta)\in\R\times\R_+$ to the dimension of the vector space $\Hrm_k\left(\mathcal{R}_\delta(X,d_X(t))\right)$.
\end{definition}

Let $\gammax$ be any DMS. Note that for any time $t_0\in\R$ and scale $\delta_0\in\R_+$, the value of $\rk_k(\gamma_X)$ associated to the \emph{repeated} pair $([t_0,t_0],\delta_0),([t_0,t_0],\delta_0)\in \U\times\R_+$ is identical to the dimension of the vector space $\Hrm_k\left(\mathcal{R}_{\delta_0}(X,d_X(t_0))\right)$, i.e. $C_k(\gamma_X)(t_0,\delta_0)$. This implies that $\rk_k(\gamma_X)$ is an \emph{enriched version} of the $k$-th CROCKER plot $C_k(\gamma_X)$ of $\gamma_X$.\footnote{To illustrate this, the $0$-th CROCKER plot $C_0(\gamma_X)$ is obtained by restricting $\beta_0^{\gamma_X}$ to the front diagonal vertical plane $\{[t,t]: t\in\R\}\times \R_+\subset \U\times \R_+$, which is colored brown in the middle picture of Figure \ref{fig:betti_zero}.} Therefore, Theorem \ref{thm:rank k stability} can be interpreted somehow as establishing the stability of the CROCKER plots of a DMS.

Recall Definition \ref{def:betti0}, the Betti-$0$ function of a DMS. 

\begin{remark}[Comparison between the Betti-$0$ function and the $0$-th CROCKER plot]\label{rem:croker-0}

Consider the DMSs $\gamma_X$ and $\gamma_Y$ in Figure \ref{fig:intro}. Since  the two metric spaces $\gamma_X(t)$ and $\gamma_Y(t)$ are isometric at \emph{each} time $t\in\R$, the two CROCKER plots $C_0(\gamma_X)$ and $C_0(\gamma_Y)$ are identical. On the other hand, the Betti-$0$ function $\beta_0^{\gamma_X}$ is distinct from $\beta_0^{\gamma_Y}$ as illustrated in Figure \ref{fig:betti_zero}. This implies that, in comparison with the $0$-th CROCKER plot, the Betti-$0$ function is more sensitive invariant of a DMS.
\end{remark}

\section{Other relevant metrics}\label{sec:other metrics}

\paragraph{Bottleneck distance}
Let us define:
\begin{itemize}
    \item 
$\ER:=\R\cup\{+\infty,-\infty\}$,
\item $\UU:=\{(u_1,u_2)\in \R^2: u_1\leq u_2\}$, which is the upper-half plane above the line $y=x$ in $\R^2$.  
\item  $\EU:=\{(u_1,u_2)\in \ER^2: u_1\leq u_2\}$, which is the upper-half plane above the line $y=x$ in the extended plane $\ER^2$. 
\end{itemize}

 For $\buu=(u_1,u_2),\ \bv=(v_1,v_2)\in \EU$, let \[\norm{\buu-\bv}_\infty:=\max\left(\abs{u_1-v_1}, \abs{u_2-v_2}\right).\]
 
 Let $X_1$ and $X_2$ be multisets of points. Let $\alpha:X_1\nrightarrow X_2$ be a matching, i.e. a partial injection. By $\dom(\alpha)$ and $\im(\alpha)$, we denote the points in $X_1$ and $X_2$ respectively, which are matched by $\alpha$.
\begin{definition}[The bottleneck distance {\cite{cohen2007stability}}]\label{def:bottleneck} Let $X_1,X_2$ be multisets of points in $\EU$. Let $\alpha:X_1\nrightarrow X_2$ be a matching. We call $\alpha$ an \emph{$\eps$-matching} if 
\begin{enumerate}[label=(\roman*)]
    \item for all $\buu\in \dom(\alpha)$, $\norm{\buu-\alpha(\buu)}_\infty\leq \eps$,
    \item for all $\buu=(u_1,u_2)\in X_1\setminus \dom(\alpha)$, $u_2-u_1 \leq 2\eps$,
    \item for all $\bv=(v_1,v_2)\in X_2 \setminus \im(\alpha)$, $v_2-v_1 \leq 2\eps$.
\end{enumerate}
Their bottleneck distance $\bott(X_1,X_2)$ is defined  as the infimum of $\eps\in[0,\infty)$ for which there exists an $\eps$-matching $\alpha:X_1\nrightarrow X_2$.
\end{definition}

\paragraph{Erosion distance.} Recently, Patel generalized the notion of persistence diagrams and proposed a new metric, the \emph{erosion distance}, for comparing generalized persistence diagrams \cite{patel2018generalized}. We review a particular case of the erosion distance. Let $\Pb$ and $\Q$ be any two posets.  Given any two maps $f,g:\Pb\rightarrow \Q$, we write $f\leq g$ if $f(p)\leq g(p)$ for all $p\in \Pb$.

Let $\UU:=\{(x,y)\in\R^2:x\leq y\}$ equipped with the partial order inherited from $\Rop\times \R$. For any $\eps\in [0,\infty),$ let $\vec{\eps}:=(-\eps,\eps)\in \UU$.  Given any map $Y:\UU\rightarrow \Z_+$ and $\eps\in [0,\infty)$, define another map $\nabla_\eps Y:\UU\rightarrow \Z_+$ as $\nabla_\eps Y(\bu):=Y(\bu+\vec{\eps}).$ If $Y$ is order-reversing, it is clear that $\nabla_\eps Y \leq Y.$

\begin{definition}[Erosion distance {\cite{patel2018generalized}}]\label{def:erosion}Let $Y_1,Y_2:\UU \rightarrow \Z_+$ be any two  order-reversing maps. The erosion distance between $Y_1$ and $Y_2$ is defined as 
\[\dero(Y_1,Y_2):=\inf\left\{\eps\in[0,\infty): \nabla_\eps Y_i\leq Y_j,\ \mbox{for}\ i,j\in\{1,2\}\right\},\] with the convention that $\dero(Y_1,Y_2)=\infty$ when there is no $\eps\in [0,\infty)$ satisfying the condition in the above set. 
\end{definition}

Note that since $\UU$ is a subposet of $\Rop\times \R$, we can regard $\dero$ is a particular case of $\dinttwo$ from Section \ref{subsec:interleaving}. The erosion distance is further generalized in \cite{puuska2017erosion}.

\paragraph{Matching distance \cite{cerri2013betti,landi2018rank}.} 
In brief, the matching distance $\dmatch$ compares rank invariants via one-dimensional reduction along lines. Namely, for any $M,N:\R^d\rightarrow \vect$, the matching distance between $\rk(M)$ and $\rk(N)$ is defined as 
\begin{equation}\label{eq:matching distance}
    \dmatch(\rk(M),\rk(N)):=\sup_{L:u=s\vec{m}+b}m^\ast \bott(\B(M|_L),\B(N|_L)),
\end{equation}
where $L$ varies in the set of all the lines parameterized by $u=s\vec{m}+b$, with $m^\ast:=\min_i m_i>0$, $\max_i m_i=1$, $\sum_{i}^n b_i=0$. Specifically, $\dmatch$ is upper bounded by $\dint^\vect$ \cite{landi2018rank}. We briefly discuss about the algorithms for $\dmatch$ and their computational cost:
\begin{itemize}
    \item For $d=1$, the RHS of equation (\ref{eq:matching distance}) reduces to the bottleneck distance between the barcodes of $M$ and $N$. The bottleneck distance can be computed in time $O(n^{1.5}\log n)$ where $n$ is the total cardinality of the two barcodes \cite{kerber2017geometry}. See also \cite{cerri2014comparing}.
    \item For $d=2$, $\dmatch$ can be computed exactly in time $O(n^{11})$ where $n$ is the size of \emph{finite presentations} of $M$ and $N$ \cite{kerber2018exact}.
    \item For $d\geq2$, algorithms for approximating  $\dmatch$ within any threshold $\eps>0$ are proposed in \cite{biasotti2011new,cerri2011new}. In particular, for the case $d\geq 3$ which is of our interest, the running time for the proposed algorithm is proportional to $\left(\frac{d}{\eps}\right)^d$ in the worst case \cite[Section 3.1]{cerri2011new}.
\end{itemize}

\paragraph{Dimension distance \cite[Section 4]{dey2018computing}.} Let $M,N:\R^d\rightarrow \vect$ be any two persistence modules. If $M,N$ are \emph{nice}\footnote{A persistence module $M:\R^d\rightarrow \vect$ is nice if there exists a value $\eps_0\in \R_+$ so that for every $\eps<\eps_0$, each internal morphism $\varphi_M(\ba,\ba+\vec{\eps})$ is either injective or surjective (or both).}, then the \emph{dimension distance $d_0$} between $\dm(M)$ and $\dm(N)$ serves as a lower bound for $\dint^\vect(M,N)$ \cite[Theorem 39]{dey2018computing}. A strength of $d_0$ is the computational efficiency. Let $M',N':[n]^d\rightarrow \vect$ be any two finite persistence modules. The entire computation for $d_0(\dm(M'),\dm(N'))$ takes only $O(n^2\log n)$ \cite[Section 4.2]{dey2018computing}.

If a persistence module $M$ is obtained by applying the $0$-th homology functor to the spatiotemporal Rips filtration of a DMS $\gamma_X$ (Definition \ref{def:spatiotemporal Rips}), then every internal morphim $\varphi_M(\cdot,\cdot)$ is surjective, and hence $M$ is nice. Specifically, $\dm(M)$ coincides with the Betti-$0$ function $\beta_0^{\gamma_X}$ (Definition \ref{def:betti0}). Therefore, one can utilize $d_0$ for comparing Betti-$0$ functions of DMSs and for obtaining a lower bound of $\dintl$ (by virtue of Theorem \ref{thm:main2}).

On the other hand, for $k\geq 1$, a persistence module $M$ obtained by applying the $k$-th homology functor to the spatiotemporal Rips filtration of a DMS does not necessarily satisfy the ``nice" condition. This prevents us from freely utilizing $d_0$ in order to obtain a lower bound for $\dintl$.

\section{Stability of the single linkage hierarchical clustering method}\label{sec:static metric spaces}

We review the single linkage hierarchical clustering (SLHC) method and its stability under the Gromov-Hausdorff distance. We begin by reviewing the Gromov-Hausdorff distance. 

\subsection{The Gromov-Hausdorff distance}\label{subsec:GH}
The Gromov-Hausdorff distance $\dgh$  (Definition \ref{def:the GH}) measures how far two metric spaces are from being isometric. 
 
Let $(X,d_X)$ and $(Y,d_Y)$ be any two metric spaces and let $\tripod$ be a tripod between $X$ and $Y$. Then, the \emph{distortion} of $R$ is defined as \[\displaystyle\dis(R):=\sup_{\substack{z,z'\in Z}}\abs{d_X\left(\varphi_X(z),\varphi_X(z')\right)-d_Y\left(\varphi_Y(z),\varphi_Y(z')\right)}.\]  
\begin{definition}[Gromov-Hausdorff distance {\cite[Section 7.3.3]{burago}}] \label{def:the GH} Let $(X,d_X)$ and $(Y,d_Y)$ be any two metric spaces. Then, 
	\[\dgh\left((X,d_X),(Y,d_Y)\right)=\frac{1}{2}\inf_R\ \dis(R),\]
	where the infimum is taken over all tripods $R$ between $X$ and $Y$. In particular, any tripod $R$ between $X$ and $Y$ with $\dis(R)\leq \eps$ is said to be an \emph{$\eps$-tripod} between $(X,d_X)$ and $(Y,d_Y)$.
\end{definition}

The computation cost of $\dgh$ leads to NP-hard problem, even for metric spaces of simple structure  \cite{agarwal2015computing,schmiedl2017computational}. Therefore, one of practical approaches for estimating $\dgh$ is to search for tractable lower bounds.


\subsection{Single linkage hierarchical clustering (SLHC) method}\label{sec:SLHC}

Let $(X,d_X)$ be a finite metric space. For each $\delta\in \R_+$, we define the equivalence relation $\sim_\delta$ on $X$ as
\[x\sim_\delta x'\ \mbox{if and only if}\ \exists x=x_0,\ldots,x_n \ \mbox{in}\ X, \mbox{s.t.}\ d_X(x_i,x_{i+1})\leq \delta.\]
Observe that for any $\delta\leq \delta'$ in $\R_+$, the inclusion $\sim_\delta \ \subset \ \sim_{\delta'}$ holds, leading to $(X/\sim_\delta) \leq (X/\sim_{\delta'})$ in $\Part(X)$ (Definition \ref{def:part}).
\begin{definition}[The dendrogram from the SLHC]\label{def:SLHC dendrogram}Let $(X,d_X)$ be a finite metric space. The dendrogram $\theta(X,d_X):\R_+\rightarrow \Part(X)$ defined by sending $\delta\in \R_+$ to $X/\sim_\delta$ is called the \emph{SLHC dendrogram} of $(X,d_X)$.
\end{definition}

\paragraph{The ultrametric induced by the single linkage hierarchical clustering method \cite{clustum}.} An ultrametric space $(X,u_X)$ is a metric space satisfying the \emph{strong triangle inequality}: for all $x,x',x''\in X$, $u_X(x,x')\leq \max\left\{u_X(x,x''),u_X(x'',x')  \right\}$.

Let $(X,d_X)$ be a finite metric space and consider its SLHC dendrogram $\theta(X,d_X):\R_+\rightarrow \Part(X)$.
For any $x,x'\in X$, define
\[u_X(x,x'):=\min\{\delta\in [0,\infty):\  \mbox{$x,x'$ belong to the same block of $X/\sim_\delta$}\}.\]
It is not difficult to check that $u_X:X\times X\rightarrow \R_+$ is a ultrametric and that $u_X(x,x')\leq d_(x,x')$, for all $x,x'\in X$.

\begin{definition}[The ultrametrics induced by the single linkage hierarchical clustering \cite{clustum}]\label{def:SLHC} Given any finite metric space $(X,d_X)$, the ultrametric space $(X,u_X)$ defined as above is said to be \emph{the ultrametric space induced by the SLHC on $(X,d_X)$} and we write $(X,u_X)=\sing(X,d_X).$
\end{definition}
The assignment $(X,d_X)\mapsto \sing(X,d_X)$ is known to be 1-Lipschitz with respect to the Gromov-Hausdorff distance: 

\begin{theorem}[Stability of the SLHC  {\cite{clustum}}]\label{thm:stability of SLHC 1} For any two finite metric spaces $(X,d_X)$ and $(Y,d_Y)$, let $(X,u_X)$ and $(Y,u_Y)$ be the ultrametric spaces induced from  $(X,d_X)$ and $(Y,d_Y)$ by the SLHC method. Then,
\begin{equation}\label{eq:stability of SLHC 1}
    \dgh((X,u_X),(Y,u_Y))\leq \dgh((X,d_X),(Y,d_Y)).
\end{equation}
\end{theorem}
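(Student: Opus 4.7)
The plan is to show that any tripod $R:X\twoheadleftarrow Z\twoheadrightarrow Y$ which realizes a small distortion with respect to $(d_X,d_Y)$ also realizes a distortion no larger than that with respect to $(u_X,u_Y)$. Passing to the infimum over all tripods then yields the inequality \eqref{eq:stability of SLHC 1}. Fix $\eps\geq 0$ and an $\eps$-tripod $\tripod$ with respect to $(d_X,d_Y)$, that is, for all $z,z'\in Z$,
\[\bigl|d_X(\varphi_X(z),\varphi_X(z'))-d_Y(\varphi_Y(z),\varphi_Y(z'))\bigr|\leq \eps.\]
I will show the same inequality holds with $d_X,d_Y$ replaced by $u_X,u_Y$, which by Definition~\ref{def:the GH} gives $\dgh((X,u_X),(Y,u_Y))\leq \eps/2$, and then take $\eps\downarrow 2\dgh((X,d_X),(Y,d_Y))$.

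Fix $z,z'\in Z$ and write $x=\varphi_X(z)$, $x'=\varphi_X(z')$, $y=\varphi_Y(z)$, $y'=\varphi_Y(z')$. By the chain description of the SLHC ultrametric (Definition~\ref{def:SLHC dendrogram} together with Definition~\ref{def:SLHC}),
\[u_X(x,x')=\min\Bigl\{\max_{0\leq i<n}d_X(x_i,x_{i+1})\ :\ x=x_0,\ldots,x_n=x'\ \text{in}\ X\Bigr\},\]
and analogously for $u_Y$. Let $\delta:=u_X(x,x')$ and pick a chain $x=x_0,\ldots,x_n=x'$ realizing this minimum, so $d_X(x_i,x_{i+1})\leq \delta$ for each $i$. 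Using surjectivity of $\varphi_X$, lift each $x_i$ to some $z_i\in Z$, choosing $z_0=z$ and $z_n=z'$, and set $y_i:=\varphi_Y(z_i)$, so that $y_0=y$ and $y_n=y'$. The $\eps$-tripod condition yields
\[d_Y(y_i,y_{i+1})\leq d_X(x_i,x_{i+1})+\eps\leq \delta+\eps \qquad (0\leq i<n),\]
so $y=y_0,\ldots,y_n=y'$ is a chain in $Y$ witnessing $u_Y(y,y')\leq \delta+\eps = u_X(x,x')+\eps$. The symmetric argument (lifting a minimizing chain for $u_Y(y,y')$ through $\varphi_Y$) gives $u_X(x,x')\leq u_Y(y,y')+\eps$. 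Combining,
\[\bigl|u_X(\varphi_X(z),\varphi_X(z'))-u_Y(\varphi_Y(z),\varphi_Y(z'))\bigr|\leq \eps \qquad \text{for all }z,z'\in Z,\]
so $R$ is also an $\eps$-tripod between $(X,u_X)$ and $(Y,u_Y)$. Taking the infimum of $\dis(R)$ over all tripods on each side and dividing by $2$ gives $\dgh((X,u_X),(Y,u_Y))\leq \dgh((X,d_X),(Y,d_Y))$.

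There is no real obstacle here; the key (routine) observation is the compatibility between surjective lifts through a tripod and the chain characterization of the single-linkage ultrametric, which converts the pointwise distortion bound on $d_X,d_Y$ into a chain-level bound on $u_X,u_Y$. The only mild subtlety is ensuring that the lifts $z_0,\ldots,z_n$ can be chosen with the prescribed endpoints $z_0=z$ and $z_n=z'$; this is trivially accomplished because only the intermediate vertices of the chain need to be lifted via surjectivity of $\varphi_X$ (respectively $\varphi_Y$ in the symmetric step).
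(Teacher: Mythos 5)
Your proof is correct: the paper does not reproduce a proof of Theorem \ref{thm:stability of SLHC 1} (it is quoted from \cite{clustum}), and your argument—showing that any tripod $R$ with $\dis(R)\leq\eps$ for $(d_X,d_Y)$ also has distortion at most $\eps$ for $(u_X,u_Y)$, by lifting a minimax chain through the surjections and pushing it to the other side—is exactly the standard argument used in that reference. The endpoint-lifting detail you flag is handled correctly, so nothing further is needed.
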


\begin{remark}\label{rem:computational cost} The term $\dgh((X,u_X),(Y,u_Y))$ in (\ref{eq:stability of SLHC 1}) cannot be approximated within any
factor less than 3 in polynomial time, unless P = NP \cite[Theorem 3]{kim2018CCCG}. Therefore, in a practical viewpoint, it is desirable to find another lower bound for $\dgh$.
\end{remark}


The Gromov-Hausdorff distance can be bounded from below by the bottleneck distance between persistence diagrams associated to Rips filtrations: see inequality (\ref{eq:bottleneck stability vs GH}).
Computing the LHS of inequality (\ref{eq:bottleneck stability vs GH}) can be carried out in polynomial time \cite{kerber2017geometry}.

\begin{remark}\label{rem:relationship}
Observe that both of the LHSs of the  inequalities in (\ref{eq:stability of SLHC 1}) and (\ref{eq:bottleneck stability vs GH}) with $k=0$ measure the difference between clustering features of $(X,d_X)$ and $(Y,d_Y)$. In fact, for any two finite metric spaces $(X,d_X)$ and $(Y,d_Y)$, the persistence modules $\Hrm_0\left(\ripsss(X,d_X)\right)$ and $\Hrm_0\left(\ripsss(Y,d_Y)\right)$ are isomorphic to $\Hrm_0\left(\ripsss(X,u_X)\right)$ and $\Hrm_0\left(\ripsss(Y,u_Y)\right)$, respectively. Therefore, 
	\[\bott\left(\dgm_0\left(\ripsss(X,d_X)\right), \dgm_0\left(\ripsss(Y,d_Y)\right)\right)\leq \dgh\left((X,u_X),(Y,u_Y)\right) \leq 2\cdot\dgh\left((X,d_X),(Y,d_Y)\right).\]	
\end{remark}


\bibliographystyle{abbrv}
\bibliography{biblio}

\begin{thebibliography}{10}

\bibitem{agarwal2015computing}
P.~K. Agarwal, K.~Fox, A.~Nath, A.~Sidiropoulos, and Y.~Wang.
\newblock Computing the gromov-hausdorff distance for metric trees.
\newblock In {\em International Symposium on Algorithms and Computation}, pages
  529--540. Springer, 2015.

\bibitem{babichev2018robust}
A.~Babichev, D.~Morozov, and Y.~Dabaghian.
\newblock Robust spatial memory maps encoded by networks with transient
  connections.
\newblock {\em PLoS computational biology}, 14(9):e1006433, 2018.

\bibitem{bauer2017persistence}
U.~Bauer, H.~Edelsbrunner, G.~Jablonski, and M.~Mrozek.
\newblock Persistence in sampled dynamical systems faster.
\newblock {\em arXiv preprint arXiv:1709.04068}, 2017.

\bibitem{bendich2013homology}
P.~Bendich, H.~Edelsbrunner, D.~Morozov, A.~Patel, et~al.
\newblock Homology and robustness of level and interlevel sets.
\newblock {\em Homology, Homotopy and Applications}, 15(1):51--72, 2013.

\bibitem{benkert2008reporting}
M.~Benkert, J.~Gudmundsson, F.~H{\"u}bner, and T.~Wolle.
\newblock Reporting flock patterns.
\newblock {\em Computational Geometry}, 41(3):111--125, 2008.

\bibitem{biasotti2011new}
S.~Biasotti, A.~Cerri, P.~Frosini, and D.~Giorgi.
\newblock A new algorithm for computing the 2-dimensional matching distance
  between size functions.
\newblock {\em Pattern Recognition Letters}, 32(14):1735--1746, 2011.

\bibitem{bjerkevik2018computing}
H.~B. Bjerkevik.
\newblock Computing the interleaving distance is {NP}-hard.
\newblock {\em arXiv preprint arXiv:1811.09165}, 2018.

\bibitem{bjerkevik2017computational}
H.~B. Bjerkevik and M.~B. Botnan.
\newblock Computational complexity of the interleaving distance.
\newblock {\em arXiv preprint arXiv:1712.04281}, 2017.

\bibitem{botnan2018algebraic}
M.~Botnan and M.~Lesnick.
\newblock Algebraic stability of zigzag persistence modules.
\newblock {\em Algebraic \& Geometric Topology}, 18(6):3133--3204, 2018.

\bibitem{bubenik2014categorification}
P.~Bubenik and J.~A. Scott.
\newblock Categorification of persistent homology.
\newblock {\em Discrete \& Computational Geometry}, 51(3):600--627, 2014.

\bibitem{buchin2013trajectory}
K.~Buchin, M.~Buchin, M.~J. van Kreveld, B.~Speckmann, and F.~Staals.
\newblock Trajectory grouping structure.
\newblock {\em JoCG}, 6(1):75--98, 2015.

\bibitem{burago}
D.~Burago, Y.~Burago, and S.~Ivanov.
\newblock {\em A course in metric geometry}, volume~33.
\newblock American Mathematical Soc., 2001.

\bibitem{Carl09}
G.~Carlsson.
\newblock Topology and data.
\newblock {\em Bull. Amer. Math. Soc.}, 46:255--308, 2009.

\bibitem{zigzag}
G.~Carlsson and V.~De~Silva.
\newblock Zigzag persistence.
\newblock {\em Foundations of computational mathematics}, 10(4):367--405, 2010.

\bibitem{carlsson2009zigzag}
G.~Carlsson, V.~De~Silva, and D.~Morozov.
\newblock Zigzag persistent homology and real-valued functions.
\newblock In {\em Proceedings of the twenty-fifth annual symposium on
  Computational geometry}, pages 247--256. ACM, 2009.

\bibitem{clustum}
G.~Carlsson and F.~M{\'e}moli.
\newblock Characterization, stability and convergence of hierarchical
  clustering methods.
\newblock {\em Journal of Machine Learning Research}, 11:1425--1470, 2010.

\bibitem{carlsson2009theory}
G.~Carlsson and A.~Zomorodian.
\newblock The theory of multidimensional persistence.
\newblock {\em Discrete \& Computational Geometry}, 42(1):71--93, 2009.

\bibitem{cerri2014comparing}
A.~Cerri, B.~Di~Fabio, G.~Jab{\l}o{\'n}ski, and F.~Medri.
\newblock Comparing shapes through multi-scale approximations of the matching
  distance.
\newblock {\em Computer Vision and Image Understanding}, 121:43--56, 2014.

\bibitem{cerri2013betti}
A.~Cerri, B.~D. Fabio, M.~Ferri, P.~Frosini, and C.~Landi.
\newblock Betti numbers in multidimensional persistent homology are stable
  functions.
\newblock {\em Mathematical Methods in the Applied Sciences},
  36(12):1543--1557, 2013.

\bibitem{cerri2011new}
A.~Cerri and P.~Frosini.
\newblock A new approximation algorithm for the matching distance in
  multidimensional persistence.
\newblock 2011.

\bibitem{CCG09}
F.~Chazal, D.~Cohen-Steiner, M.~Glisse, L.~J. Guibas, and S.~Oudot.
\newblock Proximity of persistence modules and their diagrams.
\newblock In {\em Proceeding of twenty-fifth ACM Symposium on Computational
  Geommetry}, pages 237--246, 2009.

\bibitem{dghrips}
F.~Chazal, D.~Cohen-Steiner, L.~J. Guibas, F.~M{\'e}moli, and S.~Y. Oudot.
\newblock Gromov-{H}ausdorff stable signatures for shapes using persistence.
\newblock In {\em Proc. of SGP}, 2009.

\bibitem{chazal2014persistence}
F.~Chazal, V.~De~Silva, and S.~Oudot.
\newblock Persistence stability for geometric complexes.
\newblock {\em Geometriae Dedicata}, 173(1):193--214, 2014.

\bibitem{cohen2007stability}
D.~Cohen-Steiner, H.~Edelsbrunner, and J.~Harer.
\newblock Stability of persistence diagrams.
\newblock {\em Discrete \& Computational Geometry}, 37(1):103--120, 2007.

\bibitem{CEM06}
D.~Cohen-Steiner, H.~Edelsbrunner, and D.~Morozov.
\newblock Vines and vineyards by updating persistence in linear time.
\newblock In {\em Proceedings of the twenty-second annual symposium on
  Computational geometry}, pages 119--126. ACM, 2006.

\bibitem{de2016categorified}
V.~De~Silva, E.~Munch, and A.~Patel.
\newblock Categorified {R}eeb graphs.
\newblock {\em Discrete \& Computational Geometry}, 55(4):854--906, 2016.

\bibitem{dey2018persistent}
T.~K. Dey, M.~Juda, T.~Kapela, J.~Kubica, M.~Lipinski, and M.~Mrozek.
\newblock Persistent homology of {M}orse decompositions in combinatorial
  dynamics.
\newblock {\em arXiv preprint arXiv:1801.06590}, 2018.

\bibitem{dey2018computing}
T.~K. Dey and C.~Xin.
\newblock Computing bottleneck distance for $2 $-d interval decomposable
  modules.
\newblock {\em arXiv preprint arXiv:1803.02869}, 2018.

\bibitem{edelsbrunner2008persistent}
H.~Edelsbrunner and J.~Harer.
\newblock Persistent homology-a survey.
\newblock {\em Contemporary mathematics}, 453:257--282, 2008.

\bibitem{comptopo-herbert}
H.~Edelsbrunner and J.~Harer.
\newblock {\em Computational Topology - an Introduction}.
\newblock American Mathematical Society, 2010.

\bibitem{edelsbrunner2008time}
H.~Edelsbrunner, J.~Harer, A.~Mascarenhas, V.~Pascucci, and J.~Snoeyink.
\newblock Time-varying reeb graphs for continuous space--time data.
\newblock {\em Computational Geometry}, 41(3):149--166, 2008.

\bibitem{edelsbrunner2015persistent}
H.~Edelsbrunner, G.~Jab{\l}o{\'n}ski, and M.~Mrozek.
\newblock The persistent homology of a self-map.
\newblock {\em Foundations of Computational Mathematics}, 15(5):1213--1244,
  2015.

\bibitem{ghrist2008barcodes}
R.~Ghrist.
\newblock Barcodes: the persistent topology of data.
\newblock {\em Bulletin of the American Mathematical Society}, 45(1):61--75,
  2008.

\bibitem{giusti2016two}
C.~Giusti, R.~Ghrist, and D.~S. Bassett.
\newblock Two’s company, three (or more) is a simplex.
\newblock {\em Journal of computational neuroscience}, 41(1):1--14, 2016.

\bibitem{giusti2015clique}
C.~Giusti, E.~Pastalkova, C.~Curto, and V.~Itskov.
\newblock Clique topology reveals intrinsic geometric structure in neural
  correlations.
\newblock {\em Proceedings of the National Academy of Sciences},
  112(44):13455--13460, 2015.

\bibitem{gudmundsson2006computing}
J.~Gudmundsson and M.~van Kreveld.
\newblock Computing longest duration flocks in trajectory data.
\newblock In {\em Proceedings of the 14th annual ACM international symposium on
  Advances in geographic information systems}, pages 35--42. ACM, 2006.

\bibitem{gudmundsson2007efficient}
J.~Gudmundsson, M.~van Kreveld, and B.~Speckmann.
\newblock Efficient detection of patterns in 2d trajectories of moving points.
\newblock {\em Geoinformatica}, 11(2):195--215, 2007.

\bibitem{hajij2018visual}
M.~Hajij, B.~Wang, C.~Scheidegger, and P.~Rosen.
\newblock Visual detection of structural changes in time-varying graphs using
  persistent homology.
\newblock In {\em Pacific Visualization Symposium (PacificVis), 2018 IEEE},
  pages 125--134. IEEE, 2018.

\bibitem{huang2008modeling}
Y.~Huang, C.~Chen, and P.~Dong.
\newblock Modeling herds and their evolvements from trajectory data.
\newblock In {\em International Conference on Geographic Information Science},
  pages 90--105. Springer, 2008.

\bibitem{hwang2005mining}
S.-Y. Hwang, Y.-H. Liu, J.-K. Chiu, and E.-P. Lim.
\newblock Mining mobile group patterns: A trajectory-based approach.
\newblock In {\em PAKDD}, volume 3518, pages 713--718. Springer, 2005.

\bibitem{jeung2008discovery}
H.~Jeung, M.~L. Yiu, X.~Zhou, C.~S. Jensen, and H.~T. Shen.
\newblock Discovery of convoys in trajectory databases.
\newblock {\em Proceedings of the VLDB Endowment}, 1(1):1068--1080, 2008.

\bibitem{kahle2013limit}
M.~Kahle, E.~Meckes, et~al.
\newblock Limit the theorems for {B}etti numbers of random simplicial
  complexes.
\newblock {\em Homology, Homotopy and Applications}, 15(1):343--374, 2013.

\bibitem{kalnis2005discovering}
P.~Kalnis, N.~Mamoulis, and S.~Bakiras.
\newblock On discovering moving clusters in spatio-temporal data.
\newblock In {\em SSTD}, volume 3633, pages 364--381. Springer, 2005.

\bibitem{kerber2018exact}
M.~Kerber, M.~Lesnick, and S.~Oudot.
\newblock Exact computation of the matching distance on 2-parameter persistence
  modules.
\newblock In {\em Proceedings of the thirty-fifth International Symposium on
  Computational Geometry}, pages 46:1–--46:15, 2019.

\bibitem{kerber2017geometry}
M.~Kerber, D.~Morozov, and A.~Nigmetov.
\newblock Geometry helps to compare persistence diagrams.
\newblock {\em Journal of Experimental Algorithmics (JEA)}, 22:1--4, 2017.

\bibitem{kim2018CCCG}
W.~Kim and F.~M{\'e}moli.
\newblock Formigrams: Clustering summaries of dynamic data.
\newblock In {\em Proceedings of 30th Canadian Conference on Computational
  Geometry (CCCG18)}.

\bibitem{kim2017stable}
W.~Kim and F.~M\'emoli.
\newblock Stable signatures for dynamic graphs and dynamic metric spaces via
  zigzag persistence.
\newblock {\em arXiv preprint arXiv:1712.04064}, 2017.

\bibitem{zane}
W.~Kim, F.~M\'emoli, and Z.~Smith.
\newblock
  \href{https://research.math.osu.edu/networks/formigrams}{https://research.math.osu.edu/networks/formigrams}.

\bibitem{knight1988search}
W.~J. Knight.
\newblock Search in an ordered array having variable probe cost.
\newblock {\em SIAM Journal on Computing}, 17(6):1203--1214, 1988.

\bibitem{kostitsyna2015trajectory}
I.~Kostitsyna, M.~J. van Kreveld, M.~L{\"{o}}ffler, B.~Speckmann, and
  F.~Staals.
\newblock Trajectory grouping structure under geodesic distance.
\newblock In {\em 31st International Symposium on Computational Geometry, SoCG
  2015, June 22-25, 2015, Eindhoven, The Netherlands}, pages 674--688, 2015.

\bibitem{landi2018rank}
C.~Landi.
\newblock The rank invariant stability via interleavings.
\newblock In {\em Research in Computational Topology}, pages 1--10. Springer,
  2018.

\bibitem{lesnick}
M.~Lesnick.
\newblock The theory of the interleaving distance on multidimensional
  persistence modules.
\newblock {\em Found. Comput. Math.}, 15(3):613--650, June 2015.

\bibitem{li2010swarm}
Z.~Li, B.~Ding, J.~Han, and R.~Kays.
\newblock Swarm: Mining relaxed temporal moving object clusters.
\newblock {\em Proceedings of the VLDB Endowment}, 3(1-2):723--734, 2010.

\bibitem{mac2013categories}
S.~Mac~Lane.
\newblock {\em Categories for the working mathematician}.
\newblock Springer Science \& Business Media, 2013.

\bibitem{munch2013applications}
E.~Munch.
\newblock {\em Applications of persistent homology to time varying systems}.
\newblock PhD thesis, 2013.

\bibitem{oesterling2015computing}
P.~Oesterling, C.~Heine, G.~H. Weber, D.~Morozov, and G.~Scheuermann.
\newblock Computing and visualizing time-varying merge trees for
  high-dimensional data.
\newblock In {\em Topological Methods in Data Analysis and Visualization},
  pages 87--101. Springer, 2015.

\bibitem{parrish1997animal}
J.~K. Parrish and W.~M. Hamner.
\newblock {\em Animal groups in three dimensions: how species aggregate}.
\newblock Cambridge University Press, 1997.

\bibitem{patel2018generalized}
A.~Patel.
\newblock Generalized persistence diagrams.
\newblock {\em Journal of Applied and Computational Topology}, pages 1--23,
  2018.

\bibitem{puuska2017erosion}
V.~Puuska.
\newblock Erosion distance for generalized persistence modules.
\newblock {\em arXiv preprint arXiv:1710.01577}, 2017.

\bibitem{schmiedl14shape}
F.~Schmiedl.
\newblock {\em Shape Matching and Mesh Segmentation}.
\newblock PhD thesis, Technische Universit\"{a}t M\"{u}nchen, 2014.

\bibitem{schmiedl2017computational}
F.~Schmiedl.
\newblock Computational aspects of the {G}romov--{H}ausdorff distance and its
  application in non-rigid shape matching.
\newblock {\em Discrete \& Computational Geometry}, 57(4):854--880, 2017.

\bibitem{scolamiero2017multidimensional}
M.~Scolamiero, W.~Chach{\'o}lski, A.~Lundman, R.~Ramanujam, and S.~{\"O}berg.
\newblock Multidimensional persistence and noise.
\newblock {\em Foundations of Computational Mathematics}, 17(6):1367--1406,
  2017.

\bibitem{sumpter-collective}
D.~J. Sumpter.
\newblock {\em Collective animal behavior}.
\newblock Princeton University Press, 2010.

\bibitem{topaz}
C.~M. Topaz, L.~Ziegelmeier, and T.~Halverson.
\newblock Topological data analysis of biological aggregation models.
\newblock {\em PloS one}, 10(5):e0126383, 2015.

\bibitem{ulmer2018assessing}
M.~Ulmer, L.~Ziegelmeier, and C.~M. Topaz.
\newblock Assessing biological models using topological data analysis.
\newblock {\em arXiv preprint arXiv:1811.04827}, 2018.

\bibitem{van2016grouping}
A.~van Goethem, M.~J. van Kreveld, M.~L{\"{o}}ffler, B.~Speckmann, and
  F.~Staals.
\newblock Grouping time-varying data for interactive exploration.
\newblock In {\em 32nd International Symposium on Computational Geometry, SoCG
  2016, June 14-18, 2016, Boston, MA, {USA}}, pages 61:1--61:16, 2016.

\bibitem{van2015central}
M.~J. van Kreveld, M.~L{\"{o}}ffler, and F.~Staals.
\newblock Central trajectories.
\newblock {\em Journal of Computational Geometry}, 8(1):366--386, 2017.

\bibitem{van2016refined}
M.~J. van Kreveld, M.~L{\"{o}}ffler, F.~Staals, and L.~Wiratma.
\newblock A refined definition for groups of moving entities and its
  computation.
\newblock In {\em 27th International Symposium on Algorithms and Computation,
  {ISAAC} 2016, December 12-14, 2016, Sydney, Australia}, pages 48:1--48:12,
  2016.

\bibitem{vieira2009line}
M.~R. Vieira, P.~Bakalov, and V.~J. Tsotras.
\newblock On-line discovery of flock patterns in spatio-temporal data.
\newblock In {\em Proceedings of the 17th ACM SIGSPATIAL international
  conference on advances in geographic information systems}, pages 286--295.
  ACM, 2009.

\bibitem{wang2008efficient}
Y.~Wang, E.-P. Lim, and S.-Y. Hwang.
\newblock Efficient algorithms for mining maximal valid groups.
\newblock {\em The VLDB Journal—The International Journal on Very Large Data
  Bases}, 17(3):515--535, 2008.

\end{thebibliography}
\end{document}